\newcommand{\QQ}{\mathbb Q}
\newcommand{\ZZ}{\mathbb Z}
\newcommand{\BB}{\mathbb B}
\newcommand{\LL}{\mathcal{L}}
\newcommand{\CC}{\mathbb C}
\newcommand{\Qp}{\QQ_p}
\newcommand{\Zp}{\ZZ_p}
\newcommand{\calH}{\mathcal{H}}
\newcommand{\calO}{\mathcal{O}}
\newcommand{\DD}{\mathbb D}
\newcommand{\vp}{\varphi}
\newcommand{\Dcris}{\DD_\mathrm{cris}}
\newcommand{\Hom}{\mathrm{Hom_{cts}}}
\newcommand{\Qpn}{\mathbb{Q}_{p,n}}
\DeclareMathOperator{\Char}{Char}
\DeclareMathOperator{\Tw}{Tw}
\DeclareMathOperator{\Sym}{Sym}
\DeclareMathOperator{\Iw}{Iw}
\DeclareMathOperator{\Gal}{Gal}
\DeclareMathOperator{\ord}{ord}
\DeclareMathOperator{\cris}{cris}
\DeclareMathOperator{\Fil}{Fil}
\newcommand{\OO}{\mathcal{O}}
\newcommand{\symV}{\Sym^m(V_f)}
\newcommand{\HH}{\mathcal{H}}
\newcommand{\RR}{\mathbb R}
\DeclareMathOperator{\Ind}{Ind}
\DeclareMathOperator{\cor}{cor}
\newcommand{\IndQK}{\Ind_K^\QQ}
\newtheorem{theorem}{Theorem}[section]
\newtheorem{proposition}[theorem]{Proposition}
\newtheorem{lemma}[theorem]{Lemma}
\newtheorem{corollary}[theorem]{Corollary}
\newtheorem{remark}[theorem]{Remark}
\newtheorem{definition}[theorem]{Definition}
\newtheorem{hypothesis}[theorem]{Hypothesis}
\newtheorem{conjecture}[theorem]{Conjecture}
\newcommand{\ol}[1]{\overline{#1}}
\newcommand{\triv}{\mathbf{1}}	
\newcommand{\KL}{\text{KL}}
\newcommand{\mf}[1]{\mathfrak{#1}}
\newcommand{\mc}[1]{\mathcal{#1}}
\newcommand{\wt}[1]{\widetilde{#1}}
\newcommand{\dR}{\text{dR}}
\renewcommand{\mod}[1]{\text{ (mod }#1)}
\renewcommand{\varepsilon}{\epsilon}
\renewcommand{\AA}{\mathbb{A}}
\DeclareMathOperator{\Frac}{Frac}
\DeclareMathOperator{\Frob}{Frob}
\DeclareMathOperator{\gr}{gr}
\DeclareMathOperator{\sgn}{sgn}
\DeclareMathOperator{\ad}{ad}
\renewcommand{\Col}{\operatorname{Col}}
\DeclareMathOperator{\im}{im}
\newcommand{\fl}{\mathfrak{l}}
\begin{document}

\title{I\MakeLowercase{wasawa theory for symmetric powers of }CM\MakeLowercase{ modular forms at non-ordinary primes}}

\author{Robert Harron}
\address[Harron]{
Department of Mathematics\\
Van Vleck Hall\\
University of Wisconsin--Madison\\
Madison, WI 53706\\
USA
}
\email{rharron@math.wisc.edu}

\author{Antonio Lei}
\address[Lei]{Department of Mathematics and Statistics\\
Burnside Hall\\
McGill University\\
Montreal, QC\\
Canada H3A 0B9}
\email{antonio.lei@mcgill.ca}
\thanks{The second author is supported by a CRM-ISM postdoctoral fellowship.}

\begin{abstract}
Let $f$ be a cuspidal newform with complex multiplication (CM) and let $p$ be an odd prime at which $f$ is non-ordinary. We construct admissible $p$-adic $L$-functions for the symmetric powers of $f$, thus verifying general conjectures of Dabrowski and Panchishkin in this special case. We also construct their ``mixed'' plus and minus counterparts and prove an analogue of Pollack's decomposition of the admissible $p$-adic $L$-functions into mixed plus and minus $p$-adic $L$-functions. On the arithmetic side, we define corresponding mixed plus and minus Selmer groups. We unite the arithmetic with the analytic by first formulating the Main Conjecture of Iwasawa Theory relating the plus and minus Selmer groups with the plus and minus $p$-adic $L$-functions, and then proving the exceptional zero conjecture for the admissible $p$-adic $L$-functions. The latter result takes advantage of recent work of Benois, while the former uses recent work of the second author, as well as the main conjecture of Mazur--Wiles.
\end{abstract}

\subjclass[2010]{11R23, 11F80, 11F67}

\maketitle

 
 \tableofcontents
 
\section{Introduction}
	This paper can be considered as a simultaneous sequel to the second author's article \cite{lei10} on the main conjecture for the symmetric square of a non-ordinary CM modular form and the first author's article \cite{harron12} on the exceptional zero conjecture for the symmetric powers of ordinary CM modular forms. Let $f$ be a normalised newform of weight $k\geq2$, level $\Gamma_1(N)$, and Nebentypus $\epsilon$, and let $\rho_f$ be the $p$-adic Galois representation attached to $f$. The study of the Iwasawa theory of the symmetric powers of $f$ began in the 1980s with the symmetric square case in the work of Coates--Schmidt and Hida (e.g.~\cite{coatesschmidt,schmidt88,Hi88,Hi90}). More precisely, they study the adjoint representation $\ad^0\!\rho_f\cong\Sym^2\!\rho_f\otimes\det^{-1}\!\rho_f$ in the ordinary case, constructing one- and two-variable analytic $p$-adic $L$-functions and stating a main conjecture (for $f$ corresponding to an elliptic curve). More recently, still in the ordinary case, Urban (\cite{Urban??}) has announced a proof of one divisibility in the main conjecture for $\ad^0\!\rho_f$ under some technical hypotheses. For general higher symmetric powers, even Deligne's conjecture on the algebraicity of special values of $L$-functions remains open, and consequently the analytic $p$-adic $L$-functions interpolating these values have yet to be constructed. The case where $f$ has complex multiplication is however accessible. In the ordinary setting, the analytic $p$-adic $L$-functions were constructed by the first author in \cite{harron12} and the exceptional zero conjecture was proved there as well. At non-ordinary primes, the second author constructed $p$-adic $L$-functions and formulated the Main Conjecture of Iwasawa Theory for the symmetric square in \cite{lei10}. This article deals with the non-ordinary CM case for all symmetric powers. Specifically, we construct the analytic $p$-adic $L$-functions for $\rho_m:=\Sym^m\rho_f\otimes\det^{-\lfloor m/2\rfloor}\rho_f$ and prove the corresponding exceptional zero conjecture. We also formulate a Main Conjecture of Iwasawa Theory in this setting.
	
	To begin, Section~\ref{sec:NandC} describes our notation and normalizations, while Section~\ref{sec:sympower} develops some basic facts about $\rho_m$. In Section~\ref{sec:plusminus}, we construct what we refer to as ``mixed'' plus and minus $p$-adic $L$-functions for $\rho_m$. The construction of these proceeds as in \cite{harron12} taking advantage of the decomposition of $\rho_m$ into a direct sum of modular forms and Dirichlet characters. Each modular form that shows up has plus and minus $p$-adic $L$-functions constructed by Pollack in \cite{pollack03} and we define $2^{\lceil m/2\rceil}$ mixed plus and minus $p$-adic $L$-functions for $\rho_m$ by independently choosing a sign for each modular form. We then use these mixed $p$-adic $L$-functions in Section~\ref{sec:MC} to formulate the Main Conjecture of Iwasawa Theory for $\rho_m$. This also involves defining mixed plus and minus Selmer groups which uses the decomposition of $\rho_m$ and the work of the second author in \cite{lei09}. In Section \ref{sec:exceptional}, we construct $2^{\lceil m/2\rceil}$ ``admissible'' $p$-adic $L$-functions for $\rho_m$, verifying a conjecture of Dabrowski \cite[Conjecture 1]{dabrowski11} and Panchishkin \cite[Conjecture 6.2]{panchishkin94}. It should be noted that for $m\geq2$, none of these $p$-adic $L$-functions is determined by its interpolation property. We go on to prove decompositions of the admissible $p$-adic $L$-functions into a linear combination of the mixed $p$-adic $L$-functions and mixed $p$-adic logarithms analogous to those developed by Pollack in \cite{pollack03}. Finally, in Section~\ref{subsec:exceptional}, we locate the trivial zeroes of the admissible $p$-adic $L$-functions of $\rho_m$ and prove the related exceptional zero conjecture showing that the analytic $L$-invariants are given by Benois's arithmetic $L$-invariant defined in \cite{benois11}. This relies on recent work of Benois \cite{benois??} computing the analytic and arithmetic $L$-invariants of modular forms at near-central points in the crystalline case. The trivial zero phenomenon presents an interesting feature in our situtation: the order of the trivial zero grows with the symmetric power, thus providing examples of trivial zeroes of order greater than one for motives for $\QQ$. This also has ramifications for a question raised by Greenberg in \cite[p.~170]{G94}: Is the $L$-invariant independent of the symmetric power? Our result shows that the answer must be \textit{no}, unless every CM modular form has $L$-invariant equal to 1. The latter option seems unlikely.


\section{Notation and conventions}\label{sec:NandC}

\subsection{Extensions by \texorpdfstring{$p$}{p}-power roots of unity} 
   Throughout this paper, $p$ is a fixed odd prime. If $K$ is a finite extension of $\QQ$ or $\QQ_p$, then we write $G_K$ for its absolute Galois group, $\mathcal{O}_K$ for its ring of integers, and $\chi$ for the $p$-adic cyclotomic character of $G_K$. We fix an embedding $\iota_\infty:\ol{\QQ}\rightarrow\CC$ and write $\Frob_\infty$ for the induced complex conjugation in $G_\QQ$. We also fix embeddings $\iota_\ell:\ol{\QQ}\rightarrow\ol{\QQ}_\ell$. These embeddings fix embeddings of $G_{\QQ_\ell}$ as decomposition groups in $G_\QQ$. Let $\Frob_\ell$ denote an arithmetic Frobenius element in $G_{\QQ_\ell}$. If $K$ is a number field, $\AA_K$ (resp., $\AA_{K,f}$ and $\AA_{K,\infty}$) denotes its adele ring (resp., its ring of finite adeles and its ring of infinite adeles).

For an integer $n\ge0$, we write $K_n$ for the extension $K(\mu_{p^n})$ where $\mu_{p^n}$ is the set of $p^n$th roots of unity, and $K_\infty$ denotes $\bigcup_{n\ge1} K_n$. When $K=\QQ$, we write $k_n=\QQ(\mu_{p^n})$ instead, and when $K=\Qp$, we write $\Qpn=\Qp(\mu_{p^n})$. Let $G_n$ denote the Galois group $\Gal(\Qpn/\Qp)$ for $0\le n \le\infty$. Then, $G_\infty\cong\Delta\times\Gamma$ where $\Delta=G_1$ is a finite group of order $p-1$ and $\Gamma=\Gal(\QQ_{p,\infty}/\QQ_{p,1})\cong\Zp$. The cyclotomic character $\chi$ induces an isomorphism $G_\infty\cong\ZZ_p^\times$ identifying $\Delta$ with $\mu_{p-1}$ and $\Gamma$ with $1+p\ZZ_p$. We fix a topological generator $\gamma_0$ of $\Gamma$.

 We identify a Dirichlet character $\eta:\ZZ/N\ZZ\rightarrow\CC^\times$ with a Galois character $\eta:G_\QQ\rightarrow\ol{\QQ}_p^\times$ via $\eta(\Frob_\ell)=\iota_p\iota_\infty^{-1}\eta(\ell)$ noting that $\eta(\Frob_\infty)=\eta(-1)$.
  

\subsection{Iwasawa algebras and power series}\label{sec:iwasawaalgebras}
 Given a finite extension $E$ of $\Qp$, $\Lambda_{\calO_{E}}(G_\infty)$ (resp., $\Lambda_{\calO_{E}}(\Gamma)$) denotes the Iwasawa algebra of $G_\infty$ (resp., $\Gamma$) over $\calO_{E}$. We write $\Lambda_{E}(G_\infty)=\Lambda_{\calO_{E}}(G_\infty)\otimes_{\calO_{E}}E$ and $\Lambda_{E}(\Gamma)=\Lambda_{\calO_{E}}(\Gamma)\otimes_{\calO_{E}}E$. There is an involution $f\mapsto f^\iota$ on each of these algebras given by sending each group element to its inverse. If $M$ is a finitely generated $\Lambda_{E}(\Gamma)$-torsion module, we write $\Char_{\Lambda_{E}(\Gamma)}(M)$ for its characteristic ideal. The Pontryagin dual of $M$ is written as $M^\vee$.

Given a module $M$ over $\Lambda_{\calO_{E}}(G_\infty)$ or $\Lambda_{E}(G_\infty)$ and a character $\nu:\Delta\rightarrow\Zp^\times$, $M^\nu$ denotes the $\nu$-isotypical component of $M$. Explicitly, there is an idempotent $\pi_\nu\in\Lambda_{\calO_E}(G_\infty)$ given by
\[ \pi_\nu=\frac{1}{p-1}\sum_{\sigma\in\Delta}\nu(\sigma)\sigma^{-1}
\]
that projects $M$ onto $M^\nu$. If $m\in M$, we sometimes write $m^\nu$ for $\pi_\nu m$. Note that the character group of $\Delta$ is generated by the Teichm\"uller character $\omega:=\chi|_\Delta$.

Let $r\in\RR_{\ge0}$. We define
\[
\HH_r=\left\{\sum_{n\geq0,\sigma\in\Delta}c_{n,\sigma}\cdot\sigma\cdot X^n\in\CC_p[\Delta]\llbracket X\rrbracket:\sup_{n}\frac{|c_{n,\sigma}|_p}{n^r}<\infty\ \forall\sigma\in\Delta\right\}
\]
where $|\cdot|_p$ is the $p$-adic norm on $\CC_p$ such that $|p|_p=p^{-1}$. We write $\HH_\infty=\cup_{r\ge0}\HH_r$ and $\HH_r(G_\infty)=\{f(\gamma_0-1):f\in\HH_r\}$
 for $r\in\RR_{\ge0}\cup\{\infty\}$. In other words, the elements of $\HH_r$ (respectively $\HH_r(G_\infty)$) are the power series in $X$ (respectively $\gamma_0-1$) over $\CC_p[\Delta]$ with growth rate $O(\log_p^r)$. If $F,G\in\HH_\infty$ or $\HH_\infty(G_\infty)$ are such that $F=O(G)$ and $G=O(F)$, we write $F\sim G$.

Given a subfield $E$ of $\CC_p$, we write $\HH_{r,E}=\HH_r\cap E[\Delta]\llbracket X\rrbracket$ and similarly for $\HH_{r,E}(G_\infty)$. In particular, $\HH_{0,E}(G_\infty)=\Lambda_{E}(G_\infty)$. 

Let $n\in\ZZ$. We define the $E$-linear map $\Tw_n$ from $\HH_{r,E}(G_\infty)$ to itself to be the map that sends $\sigma$ to $\chi(\sigma)^n\sigma$ for all $\sigma\in G_\infty$. It is clearly bijective (with inverse $\Tw_{-n}$).

If $\displaystyle h=\sum_{\sigma\in\Delta}c_{n,\sigma}\cdot\sigma\cdot(\gamma_0-1)^n\in\HH_\infty(G_\infty)$ and $\lambda\in\Hom(G_\infty,\CC_p^\times)$, we write
\[
\lambda(h)=\sum_{\sigma\in\Delta} c_{n,\sigma}\cdot\lambda(\sigma)\cdot(\lambda(\gamma_0)-1)^n\in\CC_p.
\]


\subsection{Crystalline representations and Selmer groups}
We write $\BB_{\cris}\subseteq\BB_{\rm dR}$ for the rings of Fontaine (for $\QQ_p$) and $\vp$ for the Frobenius acting on $\BB_{\cris}$. Recall that there exists an element $t\in\BB_{\cris}$ such that $\vp(t)=pt$ and $g\cdot t=\chi(g)t$ for $g\in G_{\Qp}$.

Let $V$ be an $E$-linear $p$-adic representation of $G_{\Qp}$ for some finite extension $E/\Qp$. We denote the crystalline Dieudonn\'{e} module and its dual by $\DD_{\cris}(V)=\operatorname{Hom}_{\Qp[G_{\Qp}]}(V,\BB_{\cris})$ and $\widetilde{\DD}_{\cris}(V)=(\BB_{\cris}\otimes V)^{G_{\Qp}}$ respectively.
We say that $V$ is crystalline if $V$ has the same $\Qp$-dimension as $\Dcris(V)$. Fix such a $V$. If $j\in\ZZ$, $\Fil^j\Dcris(V)$ denotes the $j$th de Rham filtration of $\Dcris(V)$ given by powers of $t$.

Let $T$ be a lattice of $V$ which is stable under $G_{\Qp}$ and let $A=V/T$. If $K$ is a number field and $v$ is a place of $K$, we define
\[ H^1_f(K_v,V):=\begin{cases}
				\ker\!\left(H^1(K_v,V)\rightarrow H^1(I_v,V)\right)	&\text{if }v\nmid p \\
				\ker\!\left(H^1(K_v,V)\rightarrow H^1(K_v,V\otimes_{\QQ_p}\BB_{\cris})\right)	&\text{if }v\mid p \\
			\end{cases}
\]
where $I_v$ denotes the inertia subgroup at $v$. The local condition $H^1_f(K_v,A)$ is defined as the image of the above under the long exact sequence in cohomology attached to
\[ 0\rightarrow T\rightarrow V\rightarrow A\rightarrow0.
\]
We define the Selmer group of $A$ over $K$ to be
$$
H^1_f(K,A):=\ker\left(H^1(K,A)\rightarrow\prod_v\frac{H^1(K_v,A)}{H^1_f(K_v,A)}\right)
$$
where $v$ runs through the places of $K$.

We write $V(j)$ for the $j$th Tate twist of $V$, i.e.\! $V(j)=V\otimes E e_j$ where $G_{\Qp}$ acts on $e_j$ via $\chi^j$. The Tate dual of $V$ (respectively $T$) is $V^\ast:=\operatorname{Hom}(V,E(1))$ (respectively $T^\ast:=\operatorname{Hom}(V,\calO_E(1))$). We write $A^*=V^*/T^*$.


\subsection{Imaginary quadratic fields}

Let $K$ be an imaginary quadratic field with ring of integers $\OO$. We write $\varepsilon_K$ for the quadratic character associated to $K$, i.e.\! the character on $G_{\QQ}$ which sends $\sigma$ to $1$ if $\sigma\in G_K$ and to $-1$ otherwise.

A Hecke character of $K$ is simply a continuous homomorphism $\psi:\AA_K^\times\rightarrow\CC^\times$ that is trivial on $K^\times$ with complex $L$-function
\[
L(\psi,s)=\prod_v(1-\psi(v)N(v)^{-s})^{-1}
\]
where the product runs through the finite places $v$ of $K$ at which $\psi$ is unramified, $\psi(v)$ is the image of the uniformiser of $K_{v}$ under $\psi$, and $N(v)$ is the norm of $v$. We say that $\psi$ is algebraic of type $(m,n)$ where $m,n\in\ZZ$ if the restriction of $\psi$ to $\AA_{K,\infty}^\times$ is of the form $z\mapsto z^m\bar{z}^n$. 

Fix a finite extension $E$ of $\Qp$ such that $E$ contains the image of $\iota_p\iota_\infty^{-1}\psi|_{\AA^\times_{K,f}}$. We write $V_\psi$ for the one-dimensional $E$-linear representation of $G_K$ normalised so that at finite places $v$ relatively prime to $p$ and the conductor of $\psi$, the action of $\Frob_v$ is via multiplication by $\psi(v)$. We write $\wt{V}_\psi=\IndQK(V_\psi)$.


\subsection{Modular forms}\label{sec:modularforms}

   We will use the term newform to mean a holomorphic modular form of integral weight $\geq2$ which is a normalised cuspidal eigenform for the Hecke algebra generated by all Hecke operators $T_n$ and is an element of the new subspace. Let $f=\sum a_nq^n$ be such a newform of weight $k$, level $\Gamma_1(N)$, and Nebentypus character $\epsilon$. Unless stated otherwise, we assume that $f$ is a CM modular form which is non-ordinary at $p$, i.e. 
   \[
    L(f,s)=\sum a_n(f)n^{-s}=L(\psi,s)
   \]
   for some Hecke character $\psi$ of an imaginary quadratic field $K$ with $p$ inert in $K$. Then, $\psi$ is of type $(k-1,0)$ and $\epsilon_K(p)=-1$. In this case, $a_p(f)$ is always $0$ since $f$ is CM.

We write $E$ for the completion in $\ol{\QQ}_p$ of the image of $\iota_p\iota_\infty^{-1}\psi|_{\AA^\times_{K,f}}$. The coefficient field of $f$ is contained in $E$.

We write $V_f$ for the (dual of the) 2-dimensional $E$-linear representation of $G_{\QQ}$ associated to $f$ inside the cohomology of a Kuga--Sato variety by \cite{deligne69}, so we have a homomorphism $\rho_f:G_\QQ\rightarrow \text{GL}(V_f)$ with $\det(\rho_f)=\epsilon\chi^{k-1}$. Moreover, $V_f\cong\wt{V}_\psi$.


\section{The symmetric power of a modular form}\label{sec:sympower}

Let $f$ be a newform as in Section~\ref{sec:modularforms}. Let $m\ge2$ be an integer. In this section, we collect basic results on the representation $\symV$, as well as the main representation of interest in this article, $V_m$. Throughout this article, we let $r:=\lfloor m/2\rfloor$ and $\wt{r}:=\lceil m/2\rceil$.

\begin{proposition}\label{prop:decomprep}
If $m$ is even, we have a decomposition of $G_\QQ$-representations
\[
\symV\cong\bigoplus_{i=0}^{\wt{r}-1} \Big(\wt{V}_{\psi^{m-2i}}\otimes\left(\varepsilon_K\det \rho_f\right)^i\Big)\oplus\left(\varepsilon_K\det \rho_f\right)^r.
\]
If $m$ is odd, then
\[
\symV\cong\bigoplus_{i=0}^{\wt{r}-1} \Big(\wt{V}_{\psi^{m-2i}}\otimes\left(\varepsilon_K\det \rho_f\right)^i\Big).
\]
\end{proposition}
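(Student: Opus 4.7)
My plan is to reduce the statement to an explicit computation over $G_K$ using the identification $V_f \cong \wt{V}_\psi$ from Section~\ref{sec:modularforms}, and then repackage the pieces as induced representations via the projection formula.

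First, I would restrict to $G_K$: since $V_f = \IndQK V_\psi$, we have $V_f|_{G_K} \cong V_\psi \oplus V_{\psi^c}$ (where $\psi^c$ denotes the Galois-conjugate Hecke character), so
\[
\Sym^m V_f\bigr|_{G_K} \;\cong\; \bigoplus_{j=0}^m V_{\psi^{m-j}(\psi^c)^j}.
\]
The next ingredient is to pull out a $\tau$-invariant factor from each summand using $\psi^{m-j}(\psi^c)^j = \psi^{m-2j}\cdot(\psi\psi^c)^j$. The key identity is that $\psi\psi^c$ is the restriction to $G_K$ of the $G_\QQ$-character $\varepsilon_K\det\rho_f$: indeed, $\det\wt{V}_\psi = \varepsilon_K\cdot(\psi\psi^c)|_{G_\QQ}$ and $\det\rho_f = \epsilon\chi^{k-1}$, which yields $\psi\psi^c = \varepsilon_K\det\rho_f$ as $G_\QQ$-characters (equivalently, as $G_K$-characters since $\varepsilon_K|_{G_K}=1$).

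Second, I would analyze the $G_\QQ$-structure by looking at how complex conjugation $\tau$ permutes the above $G_K$-summands. Since $\tau$ swaps $V_\psi$ and $V_{\psi^c}$, it carries the $j$-th summand to the $(m-j)$-th, so the orbits are the pairs $\{j,m-j\}$ for $j=0,1,\ldots,\wt{r}-1$, together with the fixed point $\{m/2\}$ when $m$ is even. For each pair, the corresponding $G_\QQ$-stable two-dimensional subspace $W_j\subseteq\Sym^m V_f$ is isomorphic to $\IndQK V_{\psi^{m-j}(\psi^c)^j}$ (e.g.\ by Frobenius reciprocity, since it contains a copy of $V_{\psi^{m-j}(\psi^c)^j}$ as $G_K$-subrep). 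Applying the projection formula with $V_{\psi^{m-j}(\psi^c)^j} = V_{\psi^{m-2j}}\otimes(\varepsilon_K\det\rho_f)^j|_{G_K}$ gives
\[
W_j \;\cong\; \wt{V}_{\psi^{m-2j}}\otimes(\varepsilon_K\det\rho_f)^j,
\]
which is exactly the $i=j$ summand claimed in the proposition.

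Finally, for $m$ odd every summand is absorbed into such pairs and we are done; for $m$ even there is an additional one-dimensional $G_\QQ$-subrep $M$ spanned by the middle basis vector $v^r w^r$, where $v\in V_\psi$ is a basis and $w=\tau v$. The anticipated main obstacle is to identify the $G_\QQ$-character of $M$ precisely, since both $(\det\rho_f)^r$ and $(\varepsilon_K\det\rho_f)^r$ have the correct restriction $(\psi\psi^c)^r$ to $G_K$. Here I would take $\tau = \Frob_\infty$ so $\tau^2 = 1$; then the swap $\tau\colon v\leftrightarrow w$ fixes $v^r w^r$, so $M(\tau)=+1$. A short check using the nebentypus parity $\epsilon(-1)=(-1)^k$ gives $\det\rho_f(\Frob_\infty) = \epsilon(-1)(-1)^{k-1}=-1$ and $\varepsilon_K(\Frob_\infty) = -1$, so $(\varepsilon_K\det\rho_f)^r(\Frob_\infty) = (+1)^r = +1$, matching $M$. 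Hence $M\cong(\varepsilon_K\det\rho_f)^r$, completing the even case.
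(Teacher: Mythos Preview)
Your argument is correct. The paper itself gives no proof, deferring instead to \cite[Section~2]{harron12} and \cite[Proposition~5.1]{lei10}; what you have written is precisely the standard argument underlying those references, namely: decompose $\Sym^m(\wt V_\psi)\!\mid_{G_K}$ into the lines $V_{\psi^{m-j}(\psi^c)^j}$, pair them under the action of a lift $\tau$ of complex conjugation, recognise each $\tau$-orbit of size two as an induced representation, and pin down the fixed middle line (for $m$ even) by evaluating at $\Frob_\infty$.

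Two small points of exposition. First, the phrase ``$(\psi\psi^c)|_{G_\QQ}$'' is an abuse of notation, since $\psi\psi^c$ is a $G_K$-character; what you are invoking is the determinant formula $\det(\IndQK\psi)=\varepsilon_K\cdot(\psi\circ\mathrm{Ver})$ for the transfer $\mathrm{Ver}:G_\QQ^{\mathrm{ab}}\to G_K^{\mathrm{ab}}$, whose restriction to $G_K$ is $\psi\psi^c$. Second, Frobenius reciprocity only produces a nonzero $G_\QQ$-map $\IndQK V_{\psi^{m-j}(\psi^c)^j}\to W_j$; to conclude it is an isomorphism you should note that the source is irreducible because $\psi^{m-2j}\neq(\psi^{m-2j})^c$ for $0\le j\le\wt r-1$ (the ratio $\psi/\psi^c$ has infinity type $(k-1,-(k-1))$ and hence infinite order). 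Neither point affects the validity of the proof.
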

\begin{proof}
See either \cite[Section~2]{harron12} or \cite[Proposition~5.1]{lei10}.
\end{proof}

\begin{definition}
We define for all $m\ge1$ the representation
\[
V_m=\symV\otimes\det(\rho_f)^{-r}.
\]
\end{definition}

Note that $V_2\cong\ad^0(V_f)$.

\begin{corollary}
As $G_\QQ$-representations, we have an isomorphism
\[
V_m\cong\bigoplus_{i=0}^{\wt{r}-1} \Big(\wt{V}_{\psi^{m-2i}}\otimes\left(\varepsilon_K^i\epsilon^{i-r} \chi^{(i-r)(k-1)}\right)\Big)\oplus\varepsilon_K^r
\]
when $m$ is even. If $m$ is odd,
\[
V_m\cong\bigoplus_{i=0}^{\wt{r}-1} \Big(\wt{V}_{\psi^{m-2i}}\otimes\left(\varepsilon_K^i\epsilon^{i-r}\chi^{(i-r)(k-1)} \right)\Big).
\]
\end{corollary}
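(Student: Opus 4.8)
The plan is to obtain the Corollary as a purely formal consequence of Proposition~\ref{prop:decomprep} by tensoring the two displayed decompositions of $\symV$ with the character $\det(\rho_f)^{-r}$ and simplifying. Since tensoring by a fixed one-dimensional representation is exact and commutes with finite direct sums, I would first distribute $\det(\rho_f)^{-r}$ over each summand appearing in Proposition~\ref{prop:decomprep}.

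For the summands indexed by $i$ with $0\le i\le\wt{r}-1$, this replaces $\wt{V}_{\psi^{m-2i}}\otimes(\varepsilon_K\det\rho_f)^i$ by $\wt{V}_{\psi^{m-2i}}\otimes\bigl(\varepsilon_K^i\det(\rho_f)^{i-r}\bigr)$, using that $\varepsilon_K$ and $\det\rho_f$ are one-dimensional, so that $(\varepsilon_K\det\rho_f)^i\otimes\det(\rho_f)^{-r}=\varepsilon_K^i\det(\rho_f)^{i-r}$ (with the possibly negative exponent $i-r$ interpreted via inverses, which is legitimate since these characters are valued in a field). In the even case, the extra summand $(\varepsilon_K\det\rho_f)^r$ becomes $(\varepsilon_K\det\rho_f)^r\otimes\det(\rho_f)^{-r}=\varepsilon_K^r$, the $\det(\rho_f)^r$ factors cancelling. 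Finally, I would invoke the relation $\det\rho_f=\epsilon\chi^{k-1}$ recorded in Section~\ref{sec:modularforms} to rewrite $\det(\rho_f)^{i-r}=\epsilon^{i-r}\chi^{(i-r)(k-1)}$, which yields exactly the stated formula in the even case; the odd case is identical except that the final summand is absent, matching the shape of Proposition~\ref{prop:decomprep}.

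There is essentially no obstacle here: the only thing to keep straight is the bookkeeping of the (possibly negative) exponents on $\varepsilon_K$, $\epsilon$, and $\chi$, together with the cancellation in the last summand. Note that we deliberately leave the characters as external twists of $\wt{V}_{\psi^{m-2i}}$ rather than absorbing them into the induction; if desired, the projection formula $\IndQK(W)\otimes\eta\cong\IndQK(W\otimes\eta|_{G_K})$ would let one push each twist inside, but this is not needed for the statement as written.
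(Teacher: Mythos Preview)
Your proposal is correct and follows exactly the approach in the paper: the paper's proof simply recalls that $\det(\rho_f)\cong\epsilon\chi^{k-1}$ and declares the result immediate from Proposition~\ref{prop:decomprep}, which is precisely the tensoring-and-simplifying argument you wrote out in detail.
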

\begin{proof}
Recall that $\det(\rho_f)\cong \epsilon\chi^{k-1}$, so this is immediate from Proposition~\ref{prop:decomprep}.
\end{proof}

\begin{proposition}\label{prop:decomporep}
For $0\le i\le\wt{r}-1$, there exist CM newforms $f_i$ of weight $(m-2i)(k-1)+1$ such that
\[
V_m\cong\bigoplus_{i=0}^{\wt{r}-1}\left(V_{f_i}\otimes\chi^{(i-r)(k-1)}\right)\oplus\varepsilon_K^r
\]
when $m$ is even and
\[
V_m\cong\bigoplus_{i=0}^{\wt{r}-1}V_{f_i}\otimes\chi^{(i-r)(k-1)}
\]
when $m$ is odd. Moreover, these $f_i$ have CM by $K$, level prime to $p$, and are non-ordinary at $p$.
\end{proposition}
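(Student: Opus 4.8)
The plan is to start from the isomorphism in the Corollary, namely
\[
V_m\cong\bigoplus_{i=0}^{\wt{r}-1} \Big(\wt{V}_{\psi^{m-2i}}\otimes\left(\varepsilon_K^i\epsilon^{i-r} \chi^{(i-r)(k-1)}\right)\Big)\oplus\varepsilon_K^r
\]
(and the analogous odd-$m$ statement), and to identify each summand $\wt{V}_{\psi^{m-2i}}\otimes\left(\varepsilon_K^i\epsilon^{i-r} \chi^{(i-r)(k-1)}\right)$ with a twist of a $V_{f_i}$ for a suitable CM newform $f_i$. The key observation is that $\wt{V}_{\psi^{m-2i}}=\IndQK(V_{\psi^{m-2i}})$, so twisting by a character $\theta$ of $G_\QQ$ gives $\IndQK(V_{\psi^{m-2i}}\otimes\theta|_{G_K})$ by the projection formula. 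Pulling out $\chi^{(i-r)(k-1)}$ explicitly as a Tate twist, it remains to realize $\IndQK\big(V_{\psi^{m-2i}}\otimes(\varepsilon_K^i\epsilon^{i-r})|_{G_K}\big)$ as $V_{f_i}$. Since $\varepsilon_K|_{G_K}$ is trivial, the character $\varepsilon_K^i\epsilon^{i-r}$ restricted to $G_K$ is just $(\epsilon^{i-r})|_{G_K}$, which corresponds via class field theory to a finite-order Hecke character $\chi_i$ of $K$. So the relevant Hecke character is $\psi_i:=\psi^{m-2i}\chi_i$, and I would set $f_i$ to be the newform attached to $\psi_i$ by the standard theta-series/automorphic-induction construction (so that $L(f_i,s)=L(\psi_i,s)$ and $V_{f_i}\cong\wt V_{\psi_i}$, matching the normalisation in Section~\ref{sec:modularforms}).

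Next I would verify the numerical claims. The infinity type: $\psi$ has type $(k-1,0)$, so $\psi^{m-2i}$ has type $((m-2i)(k-1),0)$, and twisting by the finite-order $\chi_i$ doesn't change the infinity type; hence $\psi_i$ is algebraic of type $((m-2i)(k-1),0)$, and the associated newform has weight $(m-2i)(k-1)+1$, as asserted. That $f_i$ is CM by $K$ is immediate from its construction as a theta series of $K$. For the level: $\psi$ has conductor prime to $p$ (since $p$ is inert in $K$ and $f$ is non-ordinary with $a_p(f)=0$ — more precisely $p\nmid N$ in this CM non-ordinary situation), and $\epsilon$ has conductor dividing $N$, which is prime to $p$; the conductor of the theta series of $\psi_i$ divides $|d_K|\cdot N(\mathfrak{f}_{\psi_i})$, all of which is prime to $p$. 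For non-ordinarity at $p$: since $p$ is inert in $K$, the local representation $\wt V_{\psi_i}|_{G_{\QQ_p}}=\Ind_{K_p}^{\QQ_p}(V_{\psi_i}|_{G_{K_p}})$ is irreducible, so $f_i$ cannot be ordinary at $p$; equivalently $a_p(f_i)=0$ by the usual argument that the $p$-th Hecke eigenvalue of a theta series vanishes at inert primes.

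The main obstacle — really the only subtle point — is bookkeeping with the twist so that one genuinely lands on a \emph{newform} with the right normalisation rather than just an abstract Galois representation: one needs $V_{\psi_i}\otimes$ nothing left over, i.e. that the remaining character $\varepsilon_K^i\epsilon^{i-r}\chi^{(i-r)(k-1)}$ after restriction to $G_K$ is exactly the Hecke character $\psi_i/\psi^{m-2i}$ up to the explicit cyclotomic twist I factored out, and that $\psi_i$ is primitive (of the asserted conductor) — there is a minor point that $\chi_i$ could a priori be a character of $K$ that is itself induced from $\QQ$ or that $\psi_i$ could be fixed by $\Gal(K/\QQ)$, which would make $\wt V_{\psi_i}$ reducible; but this cannot happen because $\psi^{m-2i}$ has nontrivial infinity type $(m-2i)(k-1)\neq0$ (as $m-2i\ge1$ and $k\ge2$), so $\psi_i\neq\psi_i^{\tau}$ and the induction is irreducible, giving a genuine cuspidal newform. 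Beyond that the argument is a routine application of automorphic induction for $\gl_1/K\to\gl_2/\QQ$, which I would cite (e.g. the same references as in Proposition~\ref{prop:decomprep}, or the classical theta-series construction), together with the projection formula for induced representations.
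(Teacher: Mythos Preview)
Your proposal is correct and follows essentially the same approach as the paper: both start from the decomposition of $V_m$ in the preceding Corollary and realise each two-dimensional summand as $V_{f_i}\otimes\chi^{(i-r)(k-1)}$ for a CM newform $f_i$ obtained via the theta-series construction applied to (a finite-order twist of) $\psi^{m-2i}$, then check weight, level, and non-ordinarity. The only cosmetic difference is that the paper first builds $f_i'$ from $\psi^{m-2i}$ via Ribet's theorem and then twists the modular form by the Dirichlet character $\varepsilon_K^i\epsilon^{i-r}$, whereas you twist the Hecke character first and induce directly; your extra care about irreducibility of the induction (ensuring cuspidality) is a point the paper leaves implicit.
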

\begin{proof}
This is discussed in \cite[Proposition 2.2]{harron12}. Since $\psi^{m-2i}$ is a Grossencharacter of type $((m-2i)(k-1),0)$, it corresponds to a CM form $f_i'$ of weight $(m-2i)(k-1)+1$ by \cite[Theorem~3.4]{ribet77}. Therefore, we may take $f_i$ to be the newform whose Hecke eigenvalues agree with those of $f_i'\otimes \varepsilon_K^i\epsilon^{i-r}$ outside of the level of $f_i^\prime$ and the conductor of $\varepsilon_K^i\epsilon^{i-r}$. It is CM because it corresponds to the Grossencharacter $\psi^{m-2i}\otimes \varepsilon_K^i\epsilon^{i-r}$.

Since $p$ does not divide the conductor of $\psi^{m-2i}$, we see that the level of $f_i'$ is not divisible by $p$. The conductor of $\varepsilon_K^i\epsilon^{i-r}$ is coprime to $p$, too, hence the level of $f_i$ is also prime to $p$. and $a_p(f_i')=0$. Since $p$ is inert in $K$, $f_i$ is non-ordinary at $p$.
\end{proof}

\begin{corollary}\label{cor:decompoL}
The complex $L$-function admits a decomposition
\[
L(V_m,s)=\left(\prod_{i=0}^{\wt{r}-1} L\left(f_i,s+(r-i)(k-1)\right)\right)\cdot
	\begin{cases}
		L(\varepsilon_K^r,s)		& m\text{ even,} \\
		1 & m\text{ odd.}
\end{cases}
\]
\end{corollary}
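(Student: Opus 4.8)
The proof of Corollary~\ref{cor:decompoL} is essentially a matter of reading off the $L$-function from the representation-theoretic decomposition in Proposition~\ref{prop:decomporep}, so the plan is to take the complex $L$-function of a direct sum of Galois representations to be the product of the corresponding $L$-functions, and to track the effect of the Tate twists on the argument $s$. First I would recall that for a direct sum $V \cong \bigoplus_j W_j$ of Galois representations of $G_\QQ$, the Euler factors multiply, so $L(V,s) = \prod_j L(W_j,s)$; this is immediate from the definition via Euler products and is insensitive to the finitely many bad places. Applying this to the decomposition of $V_m$ from Proposition~\ref{prop:decomporep} reduces the claim to understanding $L(V_{f_i}\otimes\chi^{(i-r)(k-1)},s)$ and, in the even case, $L(\varepsilon_K^r,s)$.

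The second step is the twisting computation. For any representation $W$ and integer $j$, twisting by $\chi^j$ shifts the $L$-function: the Frobenius eigenvalue at $\ell$ gets multiplied by $\ell^{j}$ (with the normalization used in the paper, where $\chi(\Frob_\ell) = \ell$ up to the usual conventions), hence $L(W\otimes\chi^j,s) = L(W,s+j)$. With $j = (i-r)(k-1)$ this gives $L(V_{f_i}\otimes\chi^{(i-r)(k-1)},s) = L(V_{f_i},s+(r-i)(k-1)) = L(f_i,s+(r-i)(k-1))$, where the last equality is the defining property of $V_{f_i}$ as the Galois representation attached to the newform $f_i$ (note the sign: $s + (i-r)(k-1) = s - (r-i)(k-1)$, so one must be careful that the exponent in the twist is $(i-r)$ while the shift appearing in the statement is written as $+(r-i)$ — these agree, and I would make sure the sign conventions for $\chi$ versus $\chi^{-1}$ in the normalization of $V_f$ are compatible). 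Multiplying over $0 \le i \le \wt{r}-1$ produces the product $\prod_{i=0}^{\wt{r}-1} L(f_i,s+(r-i)(k-1))$.

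Finally, for $m$ even the extra summand $\varepsilon_K^r$ contributes the factor $L(\varepsilon_K^r,s)$ (the Dirichlet $L$-function of the character $\varepsilon_K^r$, viewed via the identification of Dirichlet characters with Galois characters fixed in Section~\ref{sec:NandC}), with no twist since that summand carries no power of $\chi$; for $m$ odd there is no such summand, giving the factor $1$. Assembling the pieces yields exactly the displayed formula.

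There is no real obstacle here — the only points requiring care are bookkeeping: the sign convention relating $\chi^{(i-r)(k-1)}$ to the shift $+(r-i)(k-1)$, and the fact that $L(V_{f_i},s)=L(f_i,s)$ with the paper's normalization (where $V_f$ is the \emph{dual} of the representation in the cohomology of the Kuga--Sato variety, so one should double-check that this dualization does not introduce an unwanted shift, which it does not because the statement is phrased in terms of the $L$-function of the newform directly). Since Proposition~\ref{prop:decomporep} already packages all the representation theory, the corollary follows formally, and indeed the proof given is simply a reference back to that proposition together with the identity $\det(\rho_f) \cong \epsilon\chi^{k-1}$.
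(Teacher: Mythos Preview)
Your proposal is correct and is exactly the approach the paper intends: Corollary~\ref{cor:decompoL} is stated without proof as an immediate consequence of Proposition~\ref{prop:decomporep}, using multiplicativity of $L$-functions on direct sums together with the shift $L(W\otimes\chi^{j},s)=L(W,s-j)$ (so that the twist by $\chi^{(i-r)(k-1)}$ produces the shift $s+(r-i)(k-1)$). Your sign bookkeeping is a bit muddled in the middle paragraph (you first write $L(W\otimes\chi^j,s)=L(W,s+j)$, which is the wrong sign with arithmetic Frobenius), but you correct yourself and land on the right formula.
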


Now, we discuss the Hodge theory of $V_m$ at $\infty$ and describe its critical twists. Recall that the Hodge structure of $V_f$ is of the form $H^{0,k-1}\oplus H^{k-1,0}$, with each summand one-dimensional. Taking symmetric powers, we see that the Hodge structure of $\symV$ is of the form
\[ \bigoplus_{a=0}^mH^{a(k-1),(m-a)(k-1)}
\]
with each summand one-dimensional. In particular, if $m$ is even, there is a ``middle'' term $H^{r(k-1),r(k-1)}$ and to specify the Hodge structure we must also say how the ``complex conjugation'' involution, denoted $F_\infty$, acts on this term: it acts trivially. When we twist by $\det(\rho_f)^{-r}$, this shifts the indices by $-r(k-1)$ and, when $m$ is even, twists the action of $F_\infty$ on the middle term by
\begin{align*}
	\det(\rho_f)^{-r}(\Frob_\infty)	&=\left(\epsilon(-1)(-1)^{(k-1)}\right)^{-r}\\
							&=\left((-1)^k(-1)^{(k-1)}\right)^{r}\\
							&=(-1)^r.
\end{align*}
We have thus proved the following.
\begin{lemma}
	The Hodge structure of $V_m$ is
	\[ \bigoplus_{a=0}^mH^{-(r-a)(k-1),(\wt{r}-a)(k-1)}
	\]
	with each summand one-dimensional. If $m$ is even, then $F_\infty$ acts on $H^{0,0}$ by $(-1)^r$.
\end{lemma}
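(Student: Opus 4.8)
The plan is to fill in the two computations underlying the remark ``We have thus proved the following'' that precedes the statement. First I would record the Hodge structure of $\symV$: since $V_f$ is a rank-two pure Hodge structure $H^{0,k-1}\oplus H^{k-1,0}$ with one-dimensional summands, a fixed eigenbasis $\{v,\bar v\}$ of $V_f$ (with $v$ of type $(k-1,0)$ and $\bar v$ of type $(0,k-1)$) gives a basis $\{v^a\bar v^{m-a}\}_{a=0}^m$ of $\symV$, with $v^a\bar v^{m-a}$ lying in $H^{a(k-1),(m-a)(k-1)}$. As the $m+1$ resulting bidegrees are pairwise distinct, each $H^{a(k-1),(m-a)(k-1)}$ is one-dimensional and they exhaust $\symV$; this is precisely the decomposition of $\symV$ recalled just above the statement.

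Next I would twist by $\det(\rho_f)^{-r}$. Since $\det(\rho_f)$ is a rank-one Hodge structure of type $(k-1,k-1)$, the twist $\det(\rho_f)^{-r}$ has type $(-r(k-1),-r(k-1))$, and tensoring with it shifts both Hodge indices of every summand of $\symV$ by $-r(k-1)$. Thus $H^{a(k-1),(m-a)(k-1)}$ becomes $H^{(a-r)(k-1),(m-a-r)(k-1)}$, and using $r+\wt{r}=m$ (so $m-r=\wt{r}$) together with $(a-r)(k-1)=-(r-a)(k-1)$, this is exactly $H^{-(r-a)(k-1),(\wt{r}-a)(k-1)}$. Running $a$ from $0$ to $m$ gives the claimed decomposition of $V_m$, still with one-dimensional summands.

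For the statement about $F_\infty$, when $m$ is even there is a unique middle summand, namely the one with $a=r$, equal to $H^{r(k-1),r(k-1)}$ in $\symV$ and to $H^{0,0}$ after the twist. The basis vector $v^r\bar v^r$ of this line is symmetric under interchanging $v$ and $\bar v$, so $F_\infty$ fixes it, i.e.\ $F_\infty$ acts trivially on the middle piece of $\symV$. Twisting by $\det(\rho_f)^{-r}$ multiplies this action by the scalar $\det(\rho_f)^{-r}(\Frob_\infty)$, and since $\det(\rho_f)=\epsilon\chi^{k-1}$ with $\epsilon(-1)=(-1)^k$ and $\chi(\Frob_\infty)=-1$, we get $\det(\rho_f)^{-r}(\Frob_\infty)=\big((-1)^k(-1)^{k-1}\big)^{r}=(-1)^r$, as in the displayed computation above. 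Hence $F_\infty$ acts on $H^{0,0}$ by $(-1)^r$.

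The only point requiring real care is the claim that $F_\infty$ acts trivially on the middle piece of $\symV$: one must check that the natural generator $v^r\bar v^r$ of $H^{r(k-1),r(k-1)}$ acquires no sign under complex conjugation (equivalently, that the de Rham/Betti comparison and the symmetrization introduce no twist), for which it suffices that $F_\infty$ simply swaps the two eigenlines of $V_f$ without an extra sign. Granting this, the remainder is bookkeeping of Hodge bidegrees and the short character computation above.
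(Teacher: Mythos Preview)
Your proposal is correct and follows essentially the same approach as the paper: compute the Hodge types of $\Sym^m V_f$ from those of $V_f$, shift by $-r(k-1)$ upon twisting by $\det(\rho_f)^{-r}$, and compute the effect of the twist on the $F_\infty$-action on the middle piece via $\epsilon(-1)=(-1)^k$. One small remark: your closing caveat is unnecessary, since if $F_\infty v=c\,\bar v$ and $F_\infty\bar v=d\,v$ with $cd=1$ (forced by $F_\infty^2=1$), then $F_\infty(v^r\bar v^r)=(cd)^r v^r\bar v^r=v^r\bar v^r$ regardless of any extra scalar.
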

Therefore, if $d^\pm$ denotes the dimension of the part of the Hodge structure of $V_m$ on which $F_\infty$ acts by $\pm1$, then
\begin{align}
	d^+&=\begin{cases}
			r	&\text{if }m\equiv2\mod{4} \\
			r+1	&\text{otherwise,}
		\end{cases}
	\\
	d^-&=\begin{cases}
			r	&\text{if }m\equiv0\mod{4} \\
			r+1	&\text{otherwise.}
		\end{cases}
\end{align}

From this, we can determine the critical twists of $V_m$ (in the sense of \cite{deligne79}).
\begin{lemma}
	Let $C_m$ denote the set of all pairs $(\theta,j)$ such that $V_m\otimes\theta\chi^j$ is critical, where $\theta$ is a Dirichlet character of $p$-power conductor. Then,
	\[ C_m=\begin{cases}
			\left\{(\theta,j):-(k-1)+1\leq j\leq(k-1)\text{ and }\theta\chi^j(-1)=\sgn\!\left(j-\frac{1}{2}\right)(-1)^r\right\}	&\text{if }m\text{ is even}\\
			\left\{(\theta,j):1\leq j\leq k-1\right\}			&\text{if }m\text{ is odd.}
		\end{cases}
	\]
\end{lemma}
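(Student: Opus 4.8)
The plan is to reduce criticality to the summands appearing in the decomposition of $V_m$ in Proposition~\ref{prop:decomporep}, and then invoke the classical description of the critical twists of a newform and of a Dirichlet character. The key elementary fact is that criticality is additive along direct sums: since the archimedean $L$-factor of $\bigoplus_\alpha M_\alpha$ is the product $\prod_\alpha L_\infty(M_\alpha,s)$, Deligne's criterion \cite{deligne79} shows that a twist of $\bigoplus_\alpha M_\alpha$ is critical if and only if the corresponding twist of every $M_\alpha$ is. Applying this to $V_m\cong\bigoplus_{i=0}^{\wt r-1}\bigl(V_{f_i}\otimes\chi^{(i-r)(k-1)}\bigr)$ (together with the extra summand $\varepsilon_K^r$ when $m$ is even), the pair $(\theta,j)$ lies in $C_m$ if and only if each $V_{f_i}\otimes\theta\chi^{j+(i-r)(k-1)}$ — and, when $m$ is even, also $\varepsilon_K^r\otimes\theta\chi^j$ — is critical.

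I would treat the modular summands first. By the Hodge structure of $V_m$ computed in the preceding lemma, the summand $V_{f_i}\otimes\chi^{(i-r)(k-1)}$ is a rank-$2$ motive accounting for the pair of pieces $H^{(i-r)(k-1),(\wt r-i)(k-1)}$ and $H^{(\wt r-i)(k-1),(i-r)(k-1)}$, so its two Hodge--Tate weights are $(i-r)(k-1)$ and $(\wt r-i)(k-1)$ and it has no middle Hodge piece. Deligne's criterion then identifies its critical twists $\theta\chi^j$ with those $j$ satisfying $(i-r)(k-1)<j\le(\wt r-i)(k-1)$ — an interval of $(m-2i)(k-1)$ consecutive integers determined by these two weights, independently of the finite-order character $\theta$ (twisting by $\theta$ moves neither Hodge--Tate weight). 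Intersecting these intervals over $0\le i\le\wt r-1$, the extreme index $i=\wt r-1$ gives the binding inequalities, and a short computation (using $r+\wt r=m$) produces the interval $1\le j\le k-1$ when $m$ is odd and $-(k-1)+1\le j\le k-1$ when $m$ is even — precisely the range in the statement.

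When $m$ is even there is in addition the Artin summand $\varepsilon_K^r$. Its critical twists are governed by the classical determination of the critical points of a Dirichlet $L$-function: $s=j$ is critical for $L(\eta,s)$ exactly when $(-1)^j=\eta(-1)\,\sgn\!\bigl(j-\tfrac12\bigr)$. Taking $\eta=\varepsilon_K^r\theta$ and using that $\varepsilon_K$ is odd because $K$ is imaginary quadratic, so $\eta(-1)=(-1)^r\theta(-1)$, this rearranges (multiplying through by $\theta(-1)$ and using $\chi^j(-1)=(-1)^j$) to $\theta\chi^j(-1)=\sgn\!\bigl(j-\tfrac12\bigr)(-1)^r$ — exactly the parity condition in the statement. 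Since a Dirichlet $L$-function has critical points arbitrarily far out in both directions, this summand imposes no constraint on the size of $j$, so combining it with the modular contribution yields the asserted description of $C_m$; for $m$ odd there is no Artin summand, hence no parity restriction.

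I expect the only delicate point to be the bookkeeping of normalizations: matching the powers of $\chi$ in Proposition~\ref{prop:decomporep} with the Tate twists in Deligne's recipe, and — in the even case — fixing the sign convention that decides which $F_\infty$-eigenspace of the middle Hodge piece $H^{0,0}$ is paired with $\Gamma_{\RR}(s)$ rather than $\Gamma_{\RR}(s-1)$ (for instance by normalizing so that $L_\infty(\QQ,s)=\Gamma_{\RR}(s)$). This sign is exactly what produces the $(-1)^r$ and the $\sgn\!\bigl(j-\tfrac12\bigr)$, so it must be tracked carefully; the rest is routine. Equivalently, one could run the whole argument directly on the Hodge structure of $V_m$ from the preceding lemma — where, after twisting by $\theta$, $F_\infty$ acts on $H^{0,0}$ by $(-1)^r\theta(-1)$ — with the same two ingredients reappearing: a count of $\Gamma_{\CC}$-poles on the two sides of the functional equation, together with the parity of the single $\Gamma_{\RR}$-factor.
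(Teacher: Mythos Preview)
Your argument is correct and complete; the bookkeeping you flag as delicate checks out exactly as you outlined. However, your route differs from the paper's. The paper does not use the decomposition of Proposition~\ref{prop:decomporep} at all; it works directly with the Hodge structure of $V_m$ computed in the preceding lemma. From the list of Hodge types $\bigl(-(r-a)(k-1),(\wt r-a)(k-1)\bigr)$, $a=0,\dots,m$, it reads off the interval
\[
\max_{p<q}\{p:H^{p,q}\neq0\}+1\ \le\ j\ \le\ \min_{p>q}\{p:H^{p,q}\neq0\},
\]
and then, when $m$ is even, imposes the parity condition coming from the $F_\infty$-action on the middle piece $H^{0,0}$ (which acts by $(-1)^r$). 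The details are in fact left to the reader.

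Your approach has the merit of being more transparent about \emph{why} the parity condition is exactly the Dirichlet one: you trace it directly to the Artin summand $\varepsilon_K^r$, and the range of $j$ to the smallest modular summand $f_{\wt r-1}$. The paper's approach is shorter and more self-contained --- it needs only the Hodge lemma just proved, not the finer decomposition --- and it makes clear that the answer depends only on the Hodge data of $V_m$, not on the CM hypothesis. You in fact anticipate this alternative in your final paragraph; that direct Hodge computation \emph{is} the paper's proof.
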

\begin{proof}
	The integers $j$ are forced to lie between
	\[ \max_{a<b}\{a:H^{a,b}\neq0\}+1\text{ and }\min_{a>b}\{a:H^{a,b}\neq0\}.
	\]
	When there is a non-zero $H^{a,b}$ with $a=b$, there is an additional condition depending on the parity of $\theta\chi^j$ and the sign of $j-\frac{1}{2}$ (i.e.\ whether $j$ is positive or non-positive) which is determined by the action of $F_\infty$. This occurs when $m$ is even. We leave the details to the reader.
\end{proof}
We remark that this lemma holds without the requirement that $\theta$ have $p$-power conductor.

Recall that the Hodge polygon $P_H(x,V_m)$ (viewed as a function of $x\in[0,\dim(V_m)]$) is the increasing piecewise-linear function whose segment of slope $a$ has horizontal length $\dim H^{a,b}$. The vertices of the Hodge polygon of $V_m$ are as follows.
\begin{lemma}\label{lem:HodgePoly}
	For $a=0,1,\dots,m$,
	\[ P_H(a+1,V_m)=(k-1)\left(\frac{(a+1)(a-2r)}{2}\right).
	\]
\end{lemma}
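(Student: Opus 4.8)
The plan is to compute the Hodge polygon directly from the explicit description of the Hodge structure of $V_m$ given in the preceding lemma. Recall that the segment of slope $a'$ of $P_H(\cdot,V_m)$ has horizontal length $\dim H^{a',b'}$, and that $V_m$ has a one-dimensional piece $H^{-(r-a)(k-1),(\wt r-a)(k-1)}$ for each $a=0,1,\dots,m$. So the distinct slopes appearing are exactly the integers $-(r-a)(k-1)$ for $a=0,\dots,m$, i.e. the multiples of $(k-1)$ from $-r(k-1)$ up to $(\wt r)(k-1)$, each occurring with multiplicity one. Since we want the vertex at horizontal coordinate $a+1$, we sum the first $a+1$ slopes (ordered increasingly): the slopes for $a'=0,1,\dots,a$ are $-r(k-1), (1-r)(k-1),\dots,(a-r)(k-1)$.

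The main (and only) step is then the arithmetic identity
\[
P_H(a+1,V_m)=\sum_{j=0}^{a}(j-r)(k-1)=(k-1)\sum_{j=0}^{a}(j-r).
\]
I would evaluate $\sum_{j=0}^a (j-r) = \sum_{j=0}^a j - (a+1)r = \frac{a(a+1)}{2} - (a+1)r = \frac{(a+1)(a-2r)}{2}$, which gives exactly the claimed formula $P_H(a+1,V_m)=(k-1)\frac{(a+1)(a-2r)}{2}$. One should double-check the endpoint $a=m$: there the formula yields $(k-1)\frac{(m+1)(m-2r)}{2}$, and since $m-2r\in\{0,1\}$ this is $0$ when $m$ is even and $(k-1)\frac{m+1}{2}$ when $m$ is odd, matching the total sum of all slopes $(k-1)\sum_{a=0}^m(a-r)=(k-1)\big(\tfrac{m(m+1)}{2}-(m+1)r\big)$.

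There is essentially no obstacle here; the only subtlety is bookkeeping, namely making sure the slopes are listed in increasing order and that the twist by $\det(\rho_f)^{-r}$ has been correctly incorporated (it shifts all Hodge--Tate weights by $-r(k-1)$, which is already reflected in the statement of the previous lemma). I would also note in passing that the $F_\infty$-action on the middle term plays no role in the Hodge \emph{polygon}, only in the critical-twist computation, so it can be ignored. In short: the proof is the one-line summation above together with the standard fact that the Hodge polygon is built by stacking the (increasingly ordered) Hodge slopes, each with its multiplicity.
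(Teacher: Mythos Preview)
Your proof is correct and follows essentially the same approach as the paper: both compute the vertex at $x=a+1$ by summing the one-dimensional slopes $-(r-c)(k-1)=(c-r)(k-1)$ for $c=0,\dots,a$ and then evaluate the resulting arithmetic sum. Your additional sanity check at $a=m$ and remark on the irrelevance of the $F_\infty$-action are fine but not needed.
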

\begin{proof}
	Since each piece of the Hodge decomposition of $V_m$ has dimension one, the $y$-coordinate at the end of the $a$th segment is
	\[ \sum_{c=0}^a-(r-c)(k-1).
	\]
	The result follows from summing this.
\end{proof}

This whole discussion of the Hodge theory at $\infty$ holds for any newform (as we have used the term).

We will also require knowledge of the Hodge theory of $V_m$ at $p$.

\begin{lemma}
	If $m$ is even, the eigenvalues of $\vp$ on $\Dcris(V_m)$ are $\pm1$ with multiplicity $d^\pm$. When $m$ is odd, the eigenvalues are $\pm\alpha$ each with multiplicity $d^\pm=\wt{r}$, where $\pm\alpha$ are the roots of $x^2+\epsilon(p)p^{k-1}$.
\end{lemma}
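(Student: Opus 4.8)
\emph{Plan.} The idea is to reduce everything to the two-dimensional representation $V_f$, using that $\Dcris(-)$ is a tensor functor on crystalline representations (and that $V_f$ is crystalline at $p$, its level being prime to $p$). First I would recall that, since $f$ is CM with $p$ inert, $a_p(f)=0$, so the characteristic polynomial of the crystalline Frobenius $\vp$ on $\Dcris(V_f)$ is $X^2-a_p(f)X+\epsilon(p)p^{k-1}=X^2+\epsilon(p)p^{k-1}$. Hence the eigenvalues of $\vp$ on $\Dcris(V_f)$ are $\alpha$ and $-\alpha$, with $\alpha^2=-\epsilon(p)p^{k-1}$ (they are distinct, $p$ being odd). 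In particular the determinant of $\vp$ on $\Dcris(V_f)$ is $\alpha\cdot(-\alpha)=-\alpha^2=\epsilon(p)p^{k-1}$, so $\vp$ acts on the one-dimensional module $\Dcris(\det(\rho_f)^{-r})$ by $(-\alpha^2)^{-r}$.

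Next, applying $\Dcris$ to $V_m=\symV\otimes\det(\rho_f)^{-r}$ and using $\Dcris(\symV)=\Sym^m\Dcris(V_f)$, the eigenvalues of $\vp$ on $\Dcris(V_m)$ are
\[
\alpha^a(-\alpha)^{m-a}\cdot(-\alpha^2)^{-r}=(-1)^{m-a-r}\,\alpha^{m-2r}\qquad(0\le a\le m).
\]
If $m$ is even, then $m-2r=0$, so each eigenvalue is $(-1)^{m-a-r}=(-1)^{r+a}\in\{\pm1\}$; if $m$ is odd, then $m-2r=1$, so each eigenvalue is $(-1)^{r+1+a}\alpha\in\{\pm\alpha\}$, where $\alpha$ is a root of $x^2+\epsilon(p)p^{k-1}$ by the choice above. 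This already gives the shape of the eigenvalues asserted in the lemma.

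It then remains to count multiplicities by letting $a$ range over $\{0,\dots,m\}$. When $m$ is odd, exactly $\wt r$ of the indices $a$ make $r+1+a$ even and $\wt r$ make it odd, so $\pm\alpha$ each occur with multiplicity $\wt r$, which is the asserted $d^\pm$. When $m$ is even, $+1$ occurs exactly for the $a$ with $a\equiv r\mod{2}$; there are $r+1$ such $a$ if $r$ is even and $r$ if $r$ is odd, and since $m\equiv0\mod{4}\iff r$ even while $m\equiv2\mod{4}\iff r$ odd, this matches the stated value of $d^+$ (and the complementary count matches $d^-$). I do not expect a serious obstacle here: the entire computation lives in the two-dimensional case, and the only delicate point is fixing the normalization of $\vp$ on $\Dcris(V_f)$ (equivalently $\Dcris(\det\rho_f)$) consistently, after which everything is bookkeeping. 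Alternatively one could argue summand by summand from the decomposition $V_m\cong\bigoplus_{i=0}^{\wt r-1}V_{f_i}\otimes\chi^{(i-r)(k-1)}$ of Proposition~\ref{prop:decomporep} (plus a copy of $\varepsilon_K^r$ when $m$ is even), using $a_p(f_i)=0$, at the cost of also having to identify the nebentypus of each $f_i$ at $p$.
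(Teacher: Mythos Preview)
Your proof is correct and follows essentially the same route as the paper: identify the Frobenius eigenvalues on $\Dcris(V_f)$ as $\alpha,-\alpha$ (the paper attributes this to Saito), pass to $\Sym^m$ and twist by $(\alpha\ol\alpha)^{-r}$, then read off the resulting list of eigenvalues. The paper is terser on the final multiplicity count (simply writing the diagonal entries as $(\alpha/\ol\alpha)^a$ over a shifted index range and declaring that ``the result follows''), whereas you carry out the parity bookkeeping against the formulas for $d^\pm$ explicitly; but there is no real difference in strategy.
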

\begin{proof}
	Let $\alpha$ and $\ol{\alpha}$ be the roots of $x^2+\epsilon(p)p^{k-1}$, so that $\ol{\alpha}=-\alpha$. It follows from work of Saito (\cite{saito97}) that $\alpha$ and $\ol{\alpha}$ are the eigenvalues of $\vp$ on $\Dcris(V_f)$. Therefore, there is a basis of $\Dcris(\symV)$ in which $\vp$ is diagonal with entries $\alpha^a\ol{\alpha}^{m-a}$ for $a=0,1,\dots,m$. Twisting by $\det(\rho_f)^{-r}$ translates into multiplying this matrix by $(\alpha\ol{\alpha})^{-r}$. If $m$ is even, the diagonal entries become $\left(\frac{\alpha}{\ol{\alpha}}\right)^a$ for $a=-\wt{r},-r+1,\dots,r$. For $m$ odd, the diagonal entries become $\alpha\left(\frac{\alpha}{\ol{\alpha}}\right)^a$ for $a=-\wt{r},-r,\dots,r$. The result follows.
\end{proof}
We remark that this lemma holds without the assumption that $f$ is CM as long as $a_p(f)=0$.

It is not necessary, but convenient, to use the decomposition of Proposition \ref{prop:decomporep} in order to determine the filtration on $\Dcris(V_m)$. We obtain the following description of $\Dcris(V_m)$.
\begin{lemma}\label{lem:DcrisVm}
	If $m$ is even, there is a basis $v,v_0,\ol{v}_0,v_1,\ol{v}_1,\dots,v_{\wt{r}-1},\ol{v}_{\wt{r}-1}$ of $\Dcris(V_m)$ such that
	\[\begin{array}{lcl}
		\vp(v)&=&(-1)^rv, \\
		\vp(v_j)&=&v_j, \\
		\vp(\ol{v}_j)&=&-\ol{v}_j
	\end{array}
	\]
	and, for $j=-r,-r+1,\dots,\wt{r}$, the graded piece $\gr^{j(k-1)}\Dcris(V_m)$ is generated by the image of
	\[ \begin{cases}
												v_{r+j}		&\text{if }j<0, \\
												v					&\text{if }j=0, \\
												v_{\wt{r}-j}+\ol{v}_{\wt{r}-j}		&\text{if }j>0.
											\end{cases}
	\]
	If $m$ is odd, there is a basis $v_0,\ol{v}_0,v_1,\ol{v}_1,\dots,v_{\wt{r}-1},\ol{v}_{\wt{r}-1}$ of $\Dcris(V_m)$ such that
	\[\begin{array}{lcl}
		\vp(v_j)&=&v_j, \\
		\vp(\ol{v}_j)&=&-\ol{v}_j
	\end{array}
	\]
	and, for $j=-r,-r+1,\dots,\wt{r}$, the graded piece $\gr^{j(k-1)}\Dcris(V_m)$ is generated by the image of
	\[ \begin{cases}
												v_{r+j}		&\text{if }j\leq0, \\
												v_{\wt{r}-j}+\ol{v}_{\wt{r}-j}		&\text{if }j>0.
											\end{cases}
	\]
\end{lemma}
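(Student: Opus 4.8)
The plan is to read everything off the decomposition of Proposition~\ref{prop:decomporep}, using that $\Dcris(\cdot)$, viewed as a functor to filtered $\vp$-modules, commutes with finite direct sums and behaves predictably under Tate twists. First I would note that each summand appearing there is crystalline at $p$: the $f_i$ have level prime to $p$, so $V_{f_i}$ and every Tate twist of it is crystalline, while $\varepsilon_K$ is unramified (not merely crystalline) at $p$ since $p$ is inert in $K$. Hence
\[
\Dcris(V_m)\cong\bigoplus_{i=0}^{\wt{r}-1}\Dcris\bigl(V_{f_i}\otimes\chi^{(i-r)(k-1)}\bigr)\ \oplus\ \begin{cases}\Dcris(\varepsilon_K^r)&\text{if }m\text{ is even,}\\0&\text{if }m\text{ is odd,}\end{cases}
\]
as filtered $\vp$-modules, and it remains to describe each summand and to record in which degree $j(k-1)$ each graded piece sits. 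The rank-one piece (present only for $m$ even) is immediate: $\varepsilon_K^r$ is unramified at $p$ with $\varepsilon_K^r(\Frob_p)=(-1)^r$ and Hodge--Tate weight $0$, so $\Dcris(\varepsilon_K^r)$ is one-dimensional, concentrated in filtration degree $0$, with $\vp$ acting by $(-1)^r$; a basis vector $v$ then satisfies $\vp(v)=(-1)^rv$ and generates $\gr^{0}\Dcris(V_m)$ — and, as we shall see, no rank-two summand contributes in degree $0$ when $m$ is even.

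Next I would treat each rank-two summand. Fix $i$ and write $V'=V_{f_i}$, let $w=(m-2i)(k-1)+1$ be the weight of $f_i$, and set $n=(i-r)(k-1)$. By the computation recalled in the preceding lemma (Saito, \cite{saito97}) applied to $f_i$, the operator $\vp$ on $\Dcris(V')$ has two distinct eigenvalues $\pm\alpha_i$; fix a $\vp$-eigenbasis $u_i,\ol{u}_i$ of $\Dcris(V')$. The Hodge filtration of $\Dcris(V')$ jumps exactly in degrees $0$ and $w-1=(m-2i)(k-1)$, and its one nontrivial step $L_i:=\Fil^{w-1}\Dcris(V')$ is a line. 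The essential point — and the only step that is not bookkeeping — is that $L_i$ is \emph{not} $\vp$-stable: a $\vp$-stable filtration would split $\Dcris(V')$ as a filtered $\vp$-module by weak admissibility, making $V'|_{G_{\Qp}}$ reducible, whereas $V'|_{G_{\Qp}}$ is irreducible because $f_i$ has CM by $K$ and $p$ is inert in $K$, so this local representation is induced from the unramified quadratic extension of $\Qp$. Therefore $L_i$ meets neither $\vp$-eigenline, so after rescaling $u_i,\ol{u}_i$ we may arrange $L_i=E(u_i+\ol{u}_i)$; then $u_i+\ol{u}_i$ generates $\gr^{w-1}\Dcris(V')$, while $u_i$, lying outside $L_i$, generates $\gr^{0}\Dcris(V')=\Dcris(V')/L_i$.

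It remains to twist by $\chi^{n}$ and reassemble. This twist multiplies the $\vp$-eigenvalues by $p^{n}$ and shifts both filtration jumps by $+n$, so $\Dcris(V'\otimes\chi^n)$ has jumps in degrees $n=-(r-i)(k-1)$ and $w-1+n$, the latter equal to $(r-i)(k-1)$ when $m=2r$ and to $(\wt{r}-i)(k-1)$ when $m=2r+1$; one checks that the twisted $\vp$-eigenvalues come out to be exactly the $\pm1$, respectively $\pm\alpha$, of the preceding lemma. Setting $v_i,\ol{v}_i$ equal to suitable rescalings of $u_i\otimes t^{-n}$ and $\ol{u}_i\otimes t^{-n}$ gives $\vp$-eigenvectors with $L_i\otimes t^{-n}=E(v_i+\ol{v}_i)$, so $v_i+\ol{v}_i$ generates the top graded piece (degree $(r-i)(k-1)$, resp.\ $(\wt{r}-i)(k-1)$) and $v_i$ generates the other one (degree $-(r-i)(k-1)$). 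Running $i$ over $0,\dots,\wt{r}-1$, together with the vector $v$ when $m$ is even, produces a basis of $\Dcris(V_m)$ with the stated action of $\vp$; and since the $\wt{r}$ rank-two summands have their $2\wt{r}$ filtration jumps in the distinct degrees just listed, identifying for each $j=-r,\dots,\wt{r}$ the unique summand contributing in degree $j(k-1)$ — the one with $i=r+j$ when $j\le 0$ (and, in addition, the rank-one piece when $j=0$ and $m$ is even), the one with $i=\wt{r}-j$ when $j>0$ — yields precisely the list of generators in the statement. The main obstacle is confirming that each $L_i$ is not $\vp$-stable; once that is secured, everything else reduces to tracking the exponents $(m-2i)(k-1)$ and $(i-r)(k-1)$ in the two parity cases, together with the Hodge-polygon and Frobenius-eigenvalue computations already established.
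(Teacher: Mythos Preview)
Your approach is essentially the paper's: both proofs use the decomposition of Proposition~\ref{prop:decomporep} to reduce to the rank-one piece $\Dcris(\varepsilon_K^r)$ and the rank-two pieces $\Dcris(V_{f_i})$, and both identify the single nontrivial step as showing that the filtration line $L_i=\Fil^{k_i-1}\Dcris(V_{f_i})$ is not a $\vp$-eigenline, after which rescaling gives $L_i=E(v_i+\ol{v}_i)$ and the twisting/bookkeeping is the same.

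The only real difference is how that key step is justified. The paper argues directly from weak admissibility: if $L_i$ were a $\vp$-eigenline, then as a $\vp$-stable sub-filtered-module it would have $t_N(L_i)=(k_i-1)/2<k_i-1=t_H(L_i)$, contradicting the weak admissibility of $\Dcris(V_{f_i})$. Your route instead passes through local irreducibility: if $L_i$ were $\vp$-stable, then $\Dcris(V_{f_i})$ would split as a filtered $\vp$-module, each summand would then be weakly admissible (direct summands of weakly admissible modules are weakly admissible), and Colmez--Fontaine would force $V_{f_i}|_{G_{\Qp}}$ to be reducible, contradicting the fact that it is induced from $G_{K_p}$. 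This is correct, but note that the phrase ``by weak admissibility'' in your write-up is doing a lot of work; it would be clearer to state explicitly that a direct summand of a weakly admissible module is again weakly admissible, since that is what lets the splitting transfer back to the Galois side. The paper's slope argument is shorter and avoids invoking the equivalence of categories, and it applies to any newform with $a_p=0$; your irreducibility argument uses the CM hypothesis more directly but requires more machinery.
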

\begin{proof}
	Suppose $m$ is even. The crystalline Dieudonn\'e module of a Dirichlet character $\eta$ (of prime-to-$p$ conductor) has Hodge--Tate weight $0$ and $\vp$ acts by $\eta(p)$. Let $v$ be a basis of $\Dcris(\epsilon_K^r)$. For $i=0,1,\dots,\wt{r}-1$, let $v_i^\prime,\ol{v}_i^\prime$ be a basis of $\Dcris(V_{f_i})$ in which the action of $\vp$ is diagonal. Since $f_i$ has weight $2(r-i)(k-1)+1$, its crystalline Dieudonn\'e module $\Dcris(V_{f_i})$ has non-zero graded pieces in degrees $0$ and $2(r-i)(k-1)$. Furthermore, its filtration is determined by the degree $2(r-i)(k-1)$ piece. In order for the Dieudonn\'e module to be weakly admissible, this piece cannot contain either $v_i^\prime$ or $\ol{v}_i^\prime$; therefore, up to rescaling,
	\[ \Fil^{2(r-i)(k-1)}\Dcris(V_{f_i})=\langle v_i^\prime+\ol{v}_i^\prime\rangle.
	\]
	Twisting by $-(r-i)(k-1)$ shifts the filtration so that it drops into degrees $\pm(r-i)(k-1)$. Let $v_i,\ol{v}_i$ denote the twisted basis. Given what we know from lemma \ref{lem:DcrisVm}, $\vp(v_i)=\pm v_i$ and $\vp(\ol{v}_i)=\pm\ol{v}_i$. Up to reordering, $\vp(v_i)=v_i$ and $\vp(\ol{v}_i)=-\ol{v}_i$, since the Frobenius eigenvalues of $V_{f_i}$ are negatives of each other (since $a_p(f_i)=0$). The result follows from the decomposition of Proposition \ref{prop:decomporep}. The proof when $m$ is odd is along the same lines and we leave it to the reader.
\end{proof}
Therefore, the dimension of the de Rham tangent space $t_{\dR}(V_m):=\Dcris(V_m)/\Fil^0\Dcris(V_m)$ is $d^+$, unless $4|m$, in which case it is $d^-$. Recall that the Newton polygon $P_N(x,V_m)$ is the piecewise-linear increasing function whose segment of slope $\lambda$ has length equal to the number of eigenvalues of $\vp$ whose valuation is $\lambda$. The following lemma follows immediately from the previous lemma.
\begin{lemma}\label{lem:NewtonPoly}
	If $m$ is even, the Newton polygon of $V_m$ is the horizontal line from $(0,0)$ to $(m+1,0)$. When $m$ is even, it is the straight line from $(0,0)$ to $\left(m+1,\frac{(k-1)(m+1)}{2}\right)$.
\end{lemma}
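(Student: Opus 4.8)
The plan is simply to read off the $p$-adic valuations of the Frobenius eigenvalues from the preceding lemma computing the eigenvalues of $\vp$ on $\Dcris(V_m)$, and to observe that in each parity of $m$ all of these valuations coincide, so that the Newton polygon degenerates to a single line segment. As a preliminary bookkeeping point I record that $\dim_E V_m = m+1$: the Hodge decomposition of $V_m$ has one-dimensional graded pieces indexed by $a = 0,1,\dots,m$, so equivalently $d^+ + d^- = m+1$ (this is also visible from the case-by-case formulas for $d^\pm$).

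Suppose first that $m$ is even. By the earlier lemma, the eigenvalues of $\vp$ on $\Dcris(V_m)$ are $+1$ with multiplicity $d^+$ and $-1$ with multiplicity $d^-$. These are $p$-adic units, so each of the $m+1$ eigenvalues has valuation $0$. Hence $P_N(x,V_m)$ consists of a single segment of slope $0$ and horizontal length $m+1$, i.e. the horizontal segment from $(0,0)$ to $(m+1,0)$.

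Now suppose $m$ is odd. Here the eigenvalues are $\pm\alpha$, each with multiplicity $\wt{r}$, where $\pm\alpha$ are the roots of $x^2+\epsilon(p)p^{k-1}$, so that $\alpha^2 = -\epsilon(p)p^{k-1}$. Since $\epsilon$ is a Dirichlet character, $\epsilon(p)$ is a root of unity and $\ord_p(\epsilon(p)) = 0$; therefore $\ord_p(\alpha) = \tfrac{k-1}{2}$, and $\ord_p(-\alpha)$ is the same. Thus all $2\wt{r} = m+1$ eigenvalues have valuation $\tfrac{k-1}{2}$, so $P_N(x,V_m)$ is the single segment of slope $\tfrac{k-1}{2}$ from $(0,0)$ to $\left(m+1,\tfrac{(k-1)(m+1)}{2}\right)$.

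There is no genuine obstacle here: the content is entirely contained in the preceding eigenvalue computation, and the only subtlety worth spelling out is that the constant $\epsilon(p)$ contributes nothing to the valuation because $\epsilon$ is of finite order. (I would also correct the evident slip in the statement: the second clause should read ``when $m$ is \emph{odd}''.)
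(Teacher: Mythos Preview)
Your proof is correct and is exactly the argument the paper intends: the text immediately preceding the lemma states that it ``follows immediately from the previous lemma'', i.e.\ one just reads off that all Frobenius eigenvalues have valuation $0$ (for $m$ even) or $\tfrac{k-1}{2}$ (for $m$ odd). Your observation about the typo (the second ``even'' should read ``odd'') is also right.
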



\section{Plus and minus \texorpdfstring{$p$-adic $L$-functions}{p-adic L-functions}}\label{sec:plusminus}

\subsection{Some known \texorpdfstring{$p$-adic $L$-functions}{p-adic L-functions}}

In this section, we review some facts concerning the $p$-adic $L$-functions of Dirichlet characters and newforms, as well as Pollack's plus and minus $p$-adic $L$-functions of newforms.

The following is a consequence of classical results of Kubota--Leopoldt and Iwasawa (\cite{kubotaleopoldt,iwasawa72}).
\begin{theorem}\label{thm:KL}
If $\eta$ is a non-trivial Dirichlet character of conductor prime to $p$ (resp. the trivial character $\triv$) and $E/\QQ_p$ contains the values of $\eta$, then there exists a unique $L_\eta\in\Lambda_{\mathcal{O}_E}(G_\infty)$ (resp. $L_\triv$ with $(\gamma_0-1)(\gamma_0-\chi(\gamma_0))L_\triv\in\Lambda_{\mathcal{O}_E}(G_\infty)$) such that 
\[
\theta\chi^j(L_\eta)= e_{\eta}(\theta,j)\frac{L(\eta\theta^{-1},j)}{\Omega_\eta(\theta,j)}\\
\]
for any integer $j$ and Dirichlet character $\theta$ of conductor $p^n$ satisfying
\[ \theta\chi^j(-1)=\sgn\!\left(j-\frac{1}{2}\right)\eta(-1)
\]
where
\[
e_\eta(\theta,j)=\left(\frac{p^j}{\eta(p)}\right)^n\!\cdot\begin{cases}
				\left(1-p^{j-1}(\eta^{-1}\theta)(p)\right)	& \text{if }\theta\chi^j(-1)=\eta(-1), j\geq1,\\
				\left(1-p^{-j}(\eta\theta^{-1})(p)\right)	& \text{if }\theta\chi^j(-1)=-\eta(-1), j\leq0,
			\end{cases}
\]
and
\[
	\frac{1}{\Omega_\eta(\theta,j)}=\begin{cases}
								\displaystyle\frac{2\Gamma(j)\mathfrak{f}_\eta^j}{\tau(\theta^{-1})(-2\pi i)^j\theta(\mf{f}_\eta)\tau(\eta)}	& \text{if }\theta\chi^j(-1)=\eta(-1), j\geq1,\\
								\displaystyle\left(\frac{p^j}{\eta(p)}\right)^{-n}								& \text{if }\theta\chi^j(-1)=-\eta(-1), j\leq0.
							\end{cases}
\]
\end{theorem}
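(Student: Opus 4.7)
The statement is classical, due to Kubota--Leopoldt and Iwasawa, so the plan is to recall the standard construction rather than invent a new argument. I would construct $L_\eta$ as a $p$-adic (pseudo-)measure on $G_\infty$ via Stickelberger elements: for each $n$, form the partial Stickelberger element in $\QQ[\Gal(k_n/\QQ)]$, twist suitably, and take a projective limit. After clearing the (at most first-order) denominators that arise, one obtains for non-trivial $\eta$ an element $L_\eta \in \Lambda_{\mc{O}_E}(G_\infty)$, and for $\eta = \triv$ a pseudo-measure $L_\triv$ with $(\gamma_0-1)(\gamma_0-\chi(\gamma_0))L_\triv \in \Lambda_{\mc{O}_E}(G_\infty)$. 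Equivalently, one can produce $L_\eta$ as the Mahler transform of a Coleman power series built from cyclotomic units.

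Next, I would verify the interpolation formula on the ``easy side'' $j \leq 0$ with $\theta\chi^j(-1) = -\eta(-1)$: there, $L(\eta\theta^{-1}, j) = -B_{1-j,\eta\theta^{-1}}/(1-j)$ up to the Euler factor at $p$, and the defining property of the Stickelberger element recovers the stated formula with period $\Omega_\eta(\theta,j)^{-1} = (p^j/\eta(p))^{-n}$. The extension to $j \geq 1$ with $\theta\chi^j(-1) = \eta(-1)$ then proceeds via the functional equation of $L(\eta\theta^{-1}, s)$, which relates its value at $s = j$ to that of $L(\theta\eta^{-1}, s)$ at $s = 1-j$ (in the range just handled). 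The resulting period ratio should match $2\Gamma(j)\mf{f}_\eta^j/(\tau(\theta^{-1})(-2\pi i)^j \theta(\mf{f}_\eta)\tau(\eta))$ exactly. For $\eta = \triv$, the factor $(\gamma_0 - \chi(\gamma_0))$ reflects the simple pole of $\zeta(s)$ at $s = 1$, while the additional $(\gamma_0 - 1)$ corresponds to the trivial specialisation of $G_\infty$, where no interpolation is prescribed. Uniqueness follows because the admissible specialisations $\theta\chi^j$ are Zariski-dense in $\Hom(G_\infty, \CC_p^\times)$.

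The main obstacle is the careful bookkeeping in matching archimedean constants across the functional equation: one must check that the $\Gamma$-factor, the powers of $2\pi i$, the Gauss sums $\tau(\eta)$ and $\tau(\theta^{-1})$, the conductor terms $\mf{f}_\eta^j$ and $\theta(\mf{f}_\eta)$, and the transformation of the Euler factor $e_\eta(\theta,j)$ combine to yield precisely the expression stated in the theorem. Sign conventions for Gauss sums and for the cyclotomic character's interaction with complex conjugation must be tracked with care, as any inconsistency would produce spurious signs or powers of $p$ in the final interpolation formula.
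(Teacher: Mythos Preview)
Your proposal is correct in outline, but the paper takes a different and somewhat slicker route that avoids the archimedean functional equation entirely. Rather than constructing a single Stickelberger-type measure and then invoking the functional equation of $L(\eta\theta^{-1},s)$ to pass from $j\le 0$ to $j\ge 1$, the paper simply \emph{defines} $L_\eta$ piecewise on isotypic components. It takes as input Iwasawa's classical functions $L_\xi^{\KL}\in\Lambda_{\calO_E}(\Gamma)$ attached to even characters $\xi$ (interpolating only at non-positive integers), and sets
\[
L_{\eta,a}:=\begin{cases}
\Tw_{-1}\!\left(\left(L_{\eta^{-1}\omega^a}^{\KL}\right)^\iota\right) & \text{if }\eta(-1)=(-1)^a,\\
L_{\eta\omega^{1-a}}^{\KL} & \text{if }\eta(-1)=-(-1)^a,
\end{cases}
\]
so that the $j\ge1$ range is handled by a twist-and-involute of the Kubota--Leopoldt function for $\eta^{-1}$ rather than by the complex functional equation. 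The two denominators $(\gamma_0-1)$ and $(\gamma_0-\chi(\gamma_0))$ for $\eta=\triv$ then arise transparently from the $a=0$ and $a=1$ components.

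What this buys: the paper's approach reduces all the bookkeeping you flag (matching $\Gamma$-factors, Gauss sums, conductor powers) to formal manipulations in $\Lambda_E(G_\infty)$, once the classical interpolation at $n\le 0$ is accepted. Your approach is conceptually more uniform --- one construction, one functional equation --- but the archimedean constants must be tracked by hand, which is exactly the obstacle you identify. The two are of course equivalent, since the paper's definition is the $p$-adic functional equation written out componentwise.
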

\begin{proof}
	Let us briefly sketch the construction of these elements. The work of Iwasawa in \cite[Chapter 6]{iwasawa72}, building upon the main result of \cite{kubotaleopoldt}, attaches to a non-trivial \textit{even} Dirichlet character $\xi$ of conductor not divisible by $p^2$ an Iwasawa function $L_\xi^{\KL}\in\Lambda_{\mathcal{O}_E}(\Gamma)$ (for any $E$ containing the values of $\xi$) such that
	\[
		(\chi/\omega)^n(L_\xi^{\KL})=\left(1-(\xi\omega^{n-1})(p)p^{-n}\right)L(n,\xi\omega^{n-1})
	\]
	for all $n\in\ZZ_{\leq0}$. When $\xi=\triv$, Iwasawa constructs $L_\triv^{\KL}\in\Frac(\Lambda_{\ZZ_p}(G_\infty))$ such that $(\gamma_0-\chi(\gamma_0))L_\triv^{\KL}\in\Lambda_{\ZZ_p}(G_\infty)$ with the same interpolation property. We write
	\[ L_\eta=\sum_{a\in\ZZ/(p-1)\ZZ}\pi_{\omega^a}L_{\eta,a},
	\]
	where $\pi_{\omega^a}$ are the idempotents defined in \S\ref{sec:iwasawaalgebras}. Then, we take
	\[ L_{\eta,a}:=\begin{cases}
						\Tw_{-1}\!\left(\left(L_{\eta^{-1}\omega^a}^{\KL}\right)^\iota\right)	& \text{if }\eta(-1)=(-1)^a, \\
						L_{\eta\omega^{1-a}}^{\KL}		& \text{if }\eta(-1)=-(-1)^a.
					\end{cases}
	\]
\end{proof}

Next, we recall the existence of $p$-adic $L$-functions for newforms.
\begin{theorem}\label{thm:AV}
Let $f$ be a newform of level prime to $p$. Let $\alpha$ be a root of $x^2-a_p(f)x+\epsilon(p)p^{k-1} $such that $h:=\ord_p(\alpha)<k-1$ and let $\ol{\alpha}$ denote the other root. There exist $\Omega_f^\pm\in\CC^\times$ and $L_{f,\alpha}\in\HH_{h,E}(G_\infty)$ such that for all integers $j\in[1,k-1]$ and Dirichlet characters $\theta$ of conductor $p^n$, we have 
\[
\theta\chi^j(L_{f,\alpha})=e_{f,\alpha}(\theta,j)\frac{L(f,\theta^{-1},j)}{\Omega_f(\theta,j)}
\]
where
\[
e_{f,\alpha}(\theta,j)=\left(\frac{p^j}{\alpha}\right)^n\left(1-\theta^{-1}(p)\ol{\alpha}p^{-j}\right)\left(1-\frac{\theta(p)p^{j-1}}{\alpha}\right),
\]
and
\[
	\frac{1}{\Omega_f(\theta,j)}=\frac{\Gamma(j)}{\tau(\theta^{-1})(-2\pi i)^{j}\Omega_f^\delta},
\]
where $\delta=(-1)^j\theta(-1)$. Moreover, $L_{f,\alpha}$ is uniquely determined by its values at $\theta\chi^j$ given above.
\end{theorem}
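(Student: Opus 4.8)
The plan is to reproduce the classical construction of the $p$-adic $L$-function of a modular form of finite slope, due to Amice--V\'elu and Vishik, recast in the language of Mazur--Tate--Teitelbaum and in the space $\HH_{h,E}(G_\infty)$ as in Pollack's \cite{pollack03}. First I would pass to the $p$-stabilisation $f_\alpha(q)=f(q)-\ol\alpha f(q^p)$, an eigenform of level $\Gamma_1(N)\cap\Gamma_0(p)$ whose $U_p$-eigenvalue is $\alpha$, of slope $h=\ord_p(\alpha)<k-1$. Attached to $f$ is a modular symbol valued in $\Sym^{k-2}$ (an element of the compactly supported cohomology of the relevant modular/Kuga--Sato variety), and Shimura's theorem on periods provides $\Omega_f^\pm\in\CC^\times$ making its $\pm$-eigenspaces defined over the number field generated by the Hecke eigenvalues, hence over $E$ via $\iota_p\iota_\infty^{-1}$. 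From the $U_p=\alpha$ refinement one builds a $\CC_p$-valued distribution $\mu_{f,\alpha}$ on $G_\infty\cong\ZZ_p^\times$ whose moment against $\theta\chi^j$, for $1\le j\le k-1$, computes $L(f,\theta^{-1},j)$ up to the elementary archimedean factor $1/\Omega_f(\theta,j)$ (the sign $\delta=(-1)^j\theta(-1)$ dictating which period to use) and the modified factor $e_{f,\alpha}(\theta,j)$; the precise shape of $e_{f,\alpha}(\theta,j)$ --- the interpolation factor $(p^j/\alpha)^n$ forced by the $U_p$-normalisation, times the Euler-type factors $(1-\theta^{-1}(p)\ol\alpha p^{-j})$ and $(1-\theta(p)p^{j-1}/\alpha)$ --- is exactly what comes out of unwinding the passage from $f$ to $f_\alpha$ together with the usual archimedean bookkeeping in the functional equation.

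The heart of the argument is the \emph{admissibility} of $\mu_{f,\alpha}$: one must verify the Amice--V\'elu growth bounds, namely that $v_p\!\left(\int_{a+p^n\ZZ_p}(x-a)^j\,d\mu_{f,\alpha}\right)\ge (j-h)n-C$ for $0\le j\le\lceil h\rceil$, uniformly in $a$ and $n$ and for some fixed constant $C$. This is precisely where the slope hypothesis $h<k-1$ is used: the boundedness of the integral modular symbol, combined with $v_p(\alpha)=h$, yields this rate of growth of the partial moments. Granting the estimate, the Amice--V\'elu/Vishik extension theorem produces a \emph{unique} $L_{f,\alpha}\in\HH_{h,E}(G_\infty)$ --- the $E$-rationality being preserved since all the moments lie in $E$ and $E$ is closed in $\CC_p$ --- with $\theta\chi^j(L_{f,\alpha})=\int_{G_\infty}\theta\chi^j\,d\mu_{f,\alpha}$ for $j$ in the interpolation range; membership in $\HH_{h,E}(G_\infty)$ is exactly the content of the displayed assertion $L_{f,\alpha}\in\HH_{h,E}(G_\infty)$. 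I expect the admissibility estimate to be the main obstacle. (In our setting one can sidestep it: since $f$ is CM with $p$ inert in $K$ we have $a_p(f)=0$ and $\alpha^2=-\epsilon(p)p^{k-1}$, so $h=(k-1)/2<k-1$ automatically, and one may simply invoke the construction as recorded in the references.)

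It remains to address uniqueness. If $L,L'\in\HH_{h,E}(G_\infty)$ both satisfy the interpolation property, then $D:=L-L'\in\HH_{h,E}(G_\infty)$ and $\theta\chi^j(D)=0$ for every Dirichlet character $\theta$ of $p$-power conductor and every $j$ with $1\le j\le k-1$. Projecting to $\omega$-isotypic components, $D$ becomes, on each component, a power series in $\gamma_0-1$ lying in $\HH_{h,E}$ which vanishes at $\zeta\,\chi(\gamma_0)^j-1$ for all $\zeta\in\mu_{p^\infty}$ and the $k-1$ consecutive exponents $j=1,\dots,k-1$. A power series of logarithmic growth order $h$ cannot have that many zeros of a given ``level'' unless $h\ge k-1$, by the growth-versus-zero-count argument of Amice--V\'elu and Vishik (cf.\ the uniqueness assertions used throughout \cite{pollack03}). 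Since $h<k-1$, this forces $D=0$, i.e.\ $L=L'$.
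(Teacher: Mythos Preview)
Your proposal is correct and faithfully sketches the Amice--V\'elu/Vishik construction together with the standard uniqueness argument; the paper itself does not reproduce any of this but simply cites it as ``the main result of \cite{amicevelu}.'' So you have written out in detail exactly what the paper's one-line proof invokes.
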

\begin{proof}
This is the main result of \cite{amicevelu}.
\end{proof}

Now, we recall Pollack's construction of plus and minus $p$-adic $L$-functions in \cite{pollack03}. The plus and minus logarithms are defined as follows.

\begin{definition}
Let $b\ge1$ be an integer and define\footnote{We take a different normalization than Pollack does. In \cite[Corollary 4.2]{pollack03}, he attaches to an $f$ of weight $k$ two elements that we denote here $\log^\pm_{\text{Pollack}}$ which are related to our logarithms by
\[ \log^\pm_{k-1}=p^{k-1}\Tw_{-1}\log^\pm_{\text{Pollack}}.
\]}
\begin{align*}
\log^+_b&=\prod_{a=1}^{b}\prod_{n=1}^\infty\frac{\Phi_{2n}(\chi(\gamma_0)^{-a}\gamma_0)}{p},\\
\log^-_b&=\prod_{a=1}^{b}\prod_{n=1}^\infty\frac{\Phi_{2n-1}(\chi(\gamma_0)^{-a}\gamma_0)}{p},
\end{align*}
where $\Phi_m$ denotes the $p^m$th cyclotomic polynomial.
\end{definition}
Recall that $\log^\pm_b\in\HH_{\frac{b}{2}}(G_\infty)$ and that the zeroes of $\log^+_b$ (resp., $\log^-_b$) are simple and located at $\chi^j\theta$ for all integers $j\in[1,b]$ and for all $\theta$ of conductor $p^{2n+1}$ (resp., $p^{2n}$) for positive integers $n$.

The following is the main result of \cite{pollack03}.
\begin{theorem}\label{thm:pollack}
Let $f$ and $\alpha$ be as in Theorem~\ref{thm:AV} with $a_p(f)=0$. There exist $L_f^\pm\in\Lambda_E(G_\infty)$ (independent of $\alpha$) such that
\[
L_{f,\alpha}=L_f^+\log_{k-1}^++\alpha L_f^-\log_{k-1}^-.
\]
\end{theorem}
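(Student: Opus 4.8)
The plan is to reduce the statement to the two-variable analogue for the cyclotomic $p$-adic $L$-function of $f$ and then invoke the explicit description of the zeros of $\log^\pm_{k-1}$. First I would recall that, since $a_p(f)=0$, the two Frobenius roots are $\alpha$ and $\ol\alpha=-\alpha$, both of slope $h=(k-1)/2<k-1$, so the Amice--V\'elu $p$-adic $L$-function $L_{f,\alpha}\in\HH_{h,E}(G_\infty)$ of Theorem~\ref{thm:AV} exists, and likewise $L_{f,\ol\alpha}=L_{f,-\alpha}$. The key observation is that the interpolation formula for $L_{f,\alpha}$ at $\theta\chi^j$ involves the factor $e_{f,\alpha}(\theta,j)=(p^j/\alpha)^n(1-\theta^{-1}(p)\ol\alpha p^{-j})(1-\theta(p)p^{j-1}/\alpha)$, which is a rational function in $\alpha$; writing out $L_{f,\alpha}$ and $L_{f,-\alpha}$ and taking the ``even'' and ``odd'' parts in $\alpha$, one expects
\[
L_f^+ := \frac{1}{2}\left(\alpha^{-1}L_{f,\alpha}+\ol\alpha^{-1}L_{f,\ol\alpha}\right)\cdot(\text{something}),\qquad
L_f^- := \frac{1}{2\alpha}\left(L_{f,\alpha}-L_{f,\ol\alpha}\right)
\]
to be the right candidates, up to the precise bookkeeping of the powers of $\alpha$ coming from $(p^j/\alpha)^n$. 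More precisely, one separates the contributions from $n$ even and $n$ odd (equivalently $\theta$ of conductor $p^{2m}$ versus $p^{2m+1}$): on conductors $p^{2m}$ the factor $(p^j/\alpha)^n$ is a power of $\alpha^2=-\epsilon(p)p^{k-1}$, hence $\alpha$-independent, while on conductors $p^{2m+1}$ it carries one extra factor of $\alpha^{-1}$. This is exactly the dichotomy that $\log^-_{k-1}$ (vanishing at $\theta$ of conductor $p^{2n}$) and $\log^+_{k-1}$ (vanishing at $\theta$ of conductor $p^{2n+1}$) are designed to detect.

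The actual construction I would carry out is: set $L_f^+ := (L_{f,\alpha}+L_{f,-\alpha})/(2\log^+_{k-1})$ and $L_f^- := (L_{f,\alpha}-L_{f,-\alpha})/(2\alpha\log^-_{k-1})$ as elements of the fraction field of $\HH_\infty(G_\infty)$, and then prove that each is actually an Iwasawa function, i.e.\ lies in $\Lambda_E(G_\infty)$. Granting that, Theorem~\ref{thm:pollack} is immediate by adding back: $L_f^+\log^+_{k-1}+\alpha L_f^-\log^-_{k-1}=\tfrac12(L_{f,\alpha}+L_{f,-\alpha})+\tfrac12(L_{f,\alpha}-L_{f,-\alpha})=L_{f,\alpha}$, and the independence of $\alpha$ of $L_f^\pm$ is visible from the symmetric/antisymmetric definitions (replacing $\alpha$ by $-\alpha$ fixes $L_f^+$ and $L_f^-$). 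So the entire content is the integrality/boundedness claim. For this I would argue in two steps. (i) \emph{Divisibility of the zeros.} The combination $L_{f,\alpha}+L_{f,-\alpha}$ (resp.\ $L_{f,\alpha}-L_{f,-\alpha}$) vanishes at every $\theta\chi^j$ with $j\in[1,k-1]$ and $\theta$ of conductor $p^{2n}$, $n\ge1$ (resp.\ conductor $p^{2n+1}$, $n\ge1$): at such a point the interpolation values of $L_{f,\alpha}$ and $L_{f,-\alpha}$ are equal (resp.\ negatives), because the factor $(p^j/\alpha)^n$ becomes $\alpha$-even on even conductors and all other factors in $e_{f,\alpha}$ and in $L(f,\theta^{-1},j)/\Omega_f(\theta,j)$ are $\alpha$-independent. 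Since these points are exactly the (simple) zeros of $\log^+_{k-1}$ (resp.\ $\log^-_{k-1}$) in $\HH_{(k-1)/2}(G_\infty)$, and a bounded-above element of $\HH_\infty(G_\infty)$ is determined by its values at the characters $\theta\chi^j$, the quotient $L_f^\pm$ has no poles and lies in $\HH_\infty(G_\infty)$; it has $E$-coefficients because $L_{f,\pm\alpha}$ and $\log^\pm_{k-1}$ do (note $\alpha^2\in E$ even if $\alpha\notin E$, which is why the $2\alpha$ is pulled out in $L_f^-$). (ii) \emph{Bounded growth.} One has $L_{f,\pm\alpha}\in\HH_{h}(G_\infty)$ with $h=(k-1)/2$, and $\log^\pm_{k-1}\in\HH_{(k-1)/2}(G_\infty)$; a Weierstrass-preparation / Newton-polygon count along the lines of \cite{pollack03} shows that dividing by $\log^\pm_{k-1}$ drops the growth rate to $0$, i.e.\ the quotient is in $\HH_{0,E}(G_\infty)=\Lambda_E(G_\infty)$. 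The point is that the number of zeros of $\log^\pm_{k-1}$ in each disc grows at the maximal rate compatible with $\HH_{(k-1)/2}$, so it absorbs all of the growth of $L_{f,\pm\alpha}$.

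The step I expect to be the main obstacle is (ii), the bounded-growth statement: showing that the quotient is genuinely an element of $\Lambda_E(G_\infty)$ and not merely of $\HH_\infty(G_\infty)$. Establishing the mere absence of poles in (i) only uses that $\log^\pm_{k-1}$ has simple zeros precisely at the listed points, together with the fact that a bounded element of $\HH_\infty$ is determined by finitely many interpolation values in each level; but to get all the way down to $\Lambda_E(G_\infty)$ one must carefully match the valuations of the coefficients of $L_{f,\pm\alpha}$ against the order of vanishing of $\log^\pm_{k-1}$ at each character, using that $\ord_p(\alpha)=(k-1)/2$ is the generic slope. I would handle this by passing to each $\omega$-isotypic component, writing everything as a power series in $X=\gamma_0-1$, and comparing Newton polygons: the Newton polygon of $\log^\pm_b$ is explicitly computable from the definition as a product over cyclotomic polynomials $\Phi_{2n}$ (resp.\ $\Phi_{2n-1}$), and one checks it dominates the Newton polygon of $(L_{f,\alpha}\pm L_{f,-\alpha})/2$ — which has slopes bounded by those forced by the $\HH_{h}$ growth condition — so the quotient has nonnegative Newton polygon, hence bounded coefficients. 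Once this comparison is in place, the rest of the argument is formal.
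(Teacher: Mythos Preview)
The paper does not give its own proof of this theorem: it simply records that ``This is the main result of \cite{pollack03}.'' Your sketch is in fact a faithful outline of Pollack's argument---form the sum and difference of $L_{f,\alpha}$ and $L_{f,-\alpha}$, observe that these vanish at the (simple) zeros of $\log^+_{k-1}$ and $\log^-_{k-1}$ respectively, divide, and then use a Newton-polygon comparison to see that the quotients have bounded coefficients---so there is nothing to compare.

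There is, however, a slip in your step (i) that you should repair. You write that $L_{f,\alpha}+L_{f,-\alpha}$ vanishes at characters $\theta\chi^j$ with $\theta$ of conductor $p^{2n}$ because there the interpolation values ``are equal''. But if the two values are equal, it is the \emph{difference} that vanishes, not the sum. The correct parity is the opposite of what you wrote: at conductor $p^{2n+1}$ the factor $(p^j/\alpha)^{2n+1}$ is odd in $\alpha$, so the values are negatives and the \emph{sum} vanishes---and these odd-conductor characters are exactly the zeros of $\log^+_{k-1}$; at conductor $p^{2n}$ the factor is even in $\alpha$, the values agree, the \emph{difference} vanishes, and these are the zeros of $\log^-_{k-1}$. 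With that swap your divisibility argument goes through, and the rest (growth comparison, $E$-rationality via $\alpha^2\in E$, independence of $\alpha$) is as you say.
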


In particular, if $\alpha$ and $\ol{\alpha}$ are the two roots to $x^2+\epsilon(p)p^{k-1}=0$, we have $\ol{\alpha}=-\alpha$ and
\begin{equation}\label{eqn:Lf+-}
\begin{array}{ll}
	L_{f}^+&\displaystyle=\frac{\ol{\alpha}L_{f,\alpha}-\alpha L_{f,\ol{\alpha}}}{(\ol{\alpha}-\alpha)\log_{k-1}^+},\\
	L_{f}^-&\displaystyle=\frac{L_{f,\ol{\alpha}}-L_{f,\alpha}}{(\ol{\alpha}-\alpha)\log_{k-1}^-}.
\end{array}
\end{equation}
Therefore, we can readily combine Theorems~\ref{thm:AV} and~\ref{thm:pollack} to obtain the interpolating formulae of $L_{f}^\pm$ as follows.

\begin{lemma}\label{lem:inter}
Let $j\in[1,k-1]$ be an integer and $\theta$ a Dirichlet character of conductor $p^n>1$. Write $\delta$ as in Theorem~\ref{thm:AV}. If $n$ is even, then
\[
\theta\chi^j(L_f^+)=\frac{1}{(-\epsilon(p))^{n/2}p^{n(\frac{k-1}{2}-j)}\theta\chi^j(\log_{k-1}^+)}\cdot\frac{L(f,\theta^{-1},j)}{\Omega_f(\theta,j)},
\]
and if $n=1$,
\[ \theta\chi^j(L_f^+)=0.
\]
For all odd $n$, we have
\[
\theta\chi^j(L_f^-)=\frac{1}{(-\epsilon(p))^{(n+1)/2}p^{n(\frac{k-1}{2}-j)+\frac{k-1}{2}}\theta\chi^j(\log_{k-1}^-)}\cdot\frac{L(f,\theta^{-1},j)}{\Omega_f(\theta,j)}.
\]
Moreover,
\begin{align*}
\chi^j(L_f^+)&=\frac{1-p^{-1}}{\chi^j(\log_{k-1}^+)}\cdot\frac{L(f,j)}{\Omega_f(\mathbf{1},j)},\\
\chi^j(L_f^-)&=\frac{p^{-j}+\epsilon(p)^{-1}p^{j-k}}{\chi^j(\log_{k-1}^-)}\cdot\frac{L(f,j)}{\Omega_f(\mathbf{1},j)}.
\end{align*}
\end{lemma}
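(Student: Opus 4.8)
The plan is to simply substitute the interpolation formulae of Theorem~\ref{thm:AV} into the algebraic identities~\eqref{eqn:Lf+-} and collect the elementary factors. Recall that $\ol\alpha = -\alpha$ with $\alpha^2 = -\epsilon(p)p^{k-1}$, so that $\ord_p(\alpha) = \frac{k-1}{2} < k-1$ and Theorem~\ref{thm:AV} applies to both $\alpha$ and $\ol\alpha$. First I would evaluate $\theta\chi^j(L_{f,\alpha})$ and $\theta\chi^j(L_{f,\ol\alpha})$ using Theorem~\ref{thm:AV}: since $a_p(f) = 0$, the Euler-type factor $e_{f,\alpha}(\theta,j) = \left(\tfrac{p^j}{\alpha}\right)^n\left(1-\theta^{-1}(p)\ol\alpha p^{-j}\right)\left(1-\tfrac{\theta(p)p^{j-1}}{\alpha}\right)$, and for $n = \ord(\theta) > 1$ the two parenthetical terms vanish (as $\theta(p) = 0$ when $p \mid \mathrm{cond}(\theta)$), leaving $e_{f,\alpha}(\theta,j) = \left(\tfrac{p^j}{\alpha}\right)^n$, and likewise $e_{f,\ol\alpha}(\theta,j) = \left(\tfrac{p^j}{\ol\alpha}\right)^n = (-1)^n\left(\tfrac{p^j}{\alpha}\right)^n$. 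Both share the same archimedean period factor $\tfrac{1}{\Omega_f(\theta,j)}$ and the same classical value $L(f,\theta^{-1},j)$.

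Next, plug these into~\eqref{eqn:Lf+-}. For the plus function, $\theta\chi^j(L_f^+) = \dfrac{\ol\alpha\,\theta\chi^j(L_{f,\alpha}) - \alpha\,\theta\chi^j(L_{f,\ol\alpha})}{(\ol\alpha - \alpha)\,\theta\chi^j(\log_{k-1}^+)}$. The numerator before dividing by $\log^+$ is $\left[\ol\alpha\alpha^{-n} - \alpha(-1)^n\alpha^{-n}\right]p^{jn}\cdot\tfrac{L(f,\theta^{-1},j)}{\Omega_f(\theta,j)} = \alpha^{1-n}p^{jn}\left[(-1) - (-1)^n\right]\tfrac{L(f,\theta^{-1},j)}{\Omega_f(\theta,j)}$, using $\ol\alpha = -\alpha$. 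When $n$ is odd this bracket is $0$, which together with $L_{f}^+ \in \Lambda_E(G_\infty)$ (hence no poles) and the fact that $\log_{k-1}^+$ has no zero at $\theta\chi^j$ for $n$ odd $> 1$ gives $\theta\chi^j(L_f^+) = 0$ once we also note $n=1$ is the case recorded separately; more precisely for $n=1$ the stated vanishing is exactly this bracket being zero. When $n$ is even the bracket is $-2$, and dividing by $\ol\alpha - \alpha = -2\alpha$ yields $\alpha^{-n}p^{jn}$. Writing $\alpha^{-n} = (\alpha^2)^{-n/2} = (-\epsilon(p)p^{k-1})^{-n/2} = (-\epsilon(p))^{-n/2}p^{-n(k-1)/2}$, one gets $\alpha^{-n}p^{jn} = (-\epsilon(p))^{-n/2}p^{-n(\frac{k-1}{2}-j)}$, i.e. the reciprocal $(-\epsilon(p))^{n/2}p^{n(\frac{k-1}{2}-j)}$ appears in the denominator, matching the claimed formula. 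The minus case is identical: $\theta\chi^j(L_f^-) = \dfrac{\theta\chi^j(L_{f,\ol\alpha}) - \theta\chi^j(L_{f,\alpha})}{(\ol\alpha-\alpha)\theta\chi^j(\log_{k-1}^-)}$, numerator $= \left[(-1)^n - 1\right]\alpha^{-n}p^{jn}\tfrac{L(f,\theta^{-1},j)}{\Omega_f(\theta,j)}$, which is $-2\alpha^{-n}p^{jn}(\cdots)$ for $n$ odd; dividing by $-2\alpha$ gives $\alpha^{-(n+1)}p^{jn}$, and $\alpha^{-(n+1)} = (-\epsilon(p))^{-(n+1)/2}p^{-(n+1)(k-1)/2}$, so the denominator factor is $(-\epsilon(p))^{(n+1)/2}p^{n(\frac{k-1}{2}-j) + \frac{k-1}{2}}$, as claimed.

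For the final two formulae at the trivial character $\theta = \mathbf{1}$, I would redo the computation keeping the full Euler factors, since now $\mathbf{1}(p) = 1$: $e_{f,\alpha}(\mathbf{1},j) = \left(1 - \ol\alpha p^{-j}\right)\left(1 - \alpha^{-1}p^{j-1}\right)$ and similarly with $\alpha \leftrightarrow \ol\alpha$, and $\tfrac{1}{\Omega_f(\mathbf{1},j)}$ is as in Theorem~\ref{thm:AV} with $\delta = (-1)^j$. Substituting into~\eqref{eqn:Lf+-} and expanding, the combination $\ol\alpha\,e_{f,\alpha} - \alpha\,e_{f,\ol\alpha}$ over $\ol\alpha - \alpha$ simplifies — using $\ol\alpha = -\alpha$ and $\alpha\ol\alpha = \epsilon(p)p^{k-1}$ — to $1 - p^{-1}$, and the combination $e_{f,\ol\alpha} - e_{f,\alpha}$ over $\ol\alpha - \alpha$ simplifies to $p^{-j} + \epsilon(p)^{-1}p^{j-k}$; this is a short symmetric-function manipulation which I would carry out directly. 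Dividing by $\chi^j(\log_{k-1}^\pm)$ and inserting $\tfrac{L(f,j)}{\Omega_f(\mathbf{1},j)}$ gives the two displayed identities.

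The main obstacle, such as it is, is purely bookkeeping: keeping the signs straight in the repeated use of $\ol\alpha = -\alpha$ and correctly converting powers of $\alpha$ into powers of $p$ via $\alpha^2 = -\epsilon(p)p^{k-1}$, together with a small care that the division by $\log_{k-1}^\pm$ is legitimate — i.e. that $\theta\chi^j$ is not a zero of the relevant logarithm at the points where we claim a nonzero value — which follows from the recalled description of the zeroes of $\log_b^\pm$ (zeroes of $\log^+_b$ only at conductor $p^{2n+1}$, of $\log^-_b$ only at $p^{2n}$, with $n \geq 1$). No genuinely hard input beyond Theorems~\ref{thm:AV} and~\ref{thm:pollack} is needed.
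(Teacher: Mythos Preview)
Your proposal is correct and follows exactly the paper's approach: substitute the interpolation formulae of Theorem~\ref{thm:AV} into the identities~\eqref{eqn:Lf+-} and simplify using $\ol\alpha=-\alpha$ and $\alpha^2=-\epsilon(p)p^{k-1}$; the paper in fact only writes out the $L_f^-$ case for odd $n$ as a sample and leaves the rest implicit. One small slip: your parenthetical claim that ``$\log_{k-1}^+$ has no zero at $\theta\chi^j$ for $n$ odd $>1$'' is backwards---those are precisely the zeroes of $\log^+$---but this is irrelevant since the lemma makes no assertion for odd $n>1$, and your argument for $n=1$ (where $\log^+$ is indeed nonzero) is fine.
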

\begin{proof}

	As a sample calculation, for $L_f^-$ we have
	\begin{align*}
		\theta\chi^j(L_f^-)&=\left(e_{f,\ol{\alpha}}(\theta,j)-e_{f,\alpha}(\theta,j)\right)\cdot\frac{L(f,\theta^{-1},j)}{(\ol{\alpha}-\alpha)\theta\chi^j(\log_{k-1}^-)\Omega_f(\theta,j)} \\
			&=\left(\left(\frac{p^j}{\ol{\alpha}}\right)^n-\left(\frac{p^j}{\alpha}\right)^n\right)\cdot\frac{L(f,\theta^{-1},j)}{(-2\alpha)\theta\chi^j(\log_{k-1}^-)\Omega_f(\theta,j)} \\
			&=\left(\frac{2p^{nj}}{(-\alpha)^n}\right)\cdot\frac{L(f,\theta^{-1},j)}{(-2\alpha)\theta\chi^j(\log_{k-1}^-)\Omega_f(\theta,j)} \\
			&=\left(\frac{p^{nj}}{(-\alpha)^{n+1}}\right)\cdot\frac{L(f,\theta^{-1},j)}{\theta\chi^j(\log_{k-1}^-)\Omega_f(\theta,j)} \\
			&=\left(\frac{p^{nj}}{(-\alpha\ol{\alpha})^{\frac{n+1}{2}}}\right)\cdot\frac{L(f,\theta^{-1},j)}{\theta\chi^j(\log_{k-1}^-)\Omega_f(\theta,j)} \\
			&=\left(\frac{p^{nj}}{(-\alpha\ol{\alpha})^{\frac{n+1}{2}}}\right)\cdot\frac{L(f,\theta^{-1},j)}{\theta\chi^j(\log_{k-1}^-)\Omega_f(\theta,j)},
	\end{align*}
	which gives the result.
\end{proof}

\begin{remark}\mbox{}
\begin{enumerate}
	\item For a fixed $j$, $L_f^\pm$ is uniquely determined by $\theta\chi^j(L_f^+)$ for an infinite number of $\theta$ since it is in $\Lambda_E(G_\infty)$. Therefore, it is uniquely determined by the interpolating properties given in Lemma~\ref{lem:inter}.
	\item The interpolation property for the other $n$ is not known because $\log^\pm_{k-1}$ vanishes there. To distinguish the above characters, $\theta\chi^j$ will be called a \emph{plus-critical twist} (resp., a \emph{minus-critical twist}) of $f$ if $1\leq j\leq k-1$ with $\theta$ of conductor $p^n\geq1$ with $n=1$ or even (resp., with $n=0$ or odd).
	\item It is clear from the lemma that $L_f^+$ vanishes at the plus-critical twist $\theta\chi^j$ if and only if $\theta$ has conductor $p$ or the complex $L$-value vanishes at the central point. Furthermore, $L_f^-$ vanishes at the minus-critical twist $\theta\chi^j$ if and only if $k$ is even, $\epsilon(p)=-1$, $\theta=\mathbf{1}$, and $j=\frac{k}{2}$ (the central point) or the complex $L$-value vanishes at the central point.
\end{enumerate}
\end{remark}

For notational simplicity later on, we define the following.
\begin{definition}
Let $j\in[1,k-1]$ be an integer and $\theta$ a Dirichlet character of conductor $p^n$ with $n\geq0$. We define
\[
e_f^+(\theta,j)=
\begin{cases}
\displaystyle\frac{1-\theta(p)p^{-1}}{(-\epsilon(p))^{n/2}p^{n(\frac{k-1}{2}-j)}\theta\chi^j(\log_{k-1}^+)}&\text{if }n\text{ is even,}\\
0&\text{if $n=1$}.
\end{cases}
\]
Similarly, we define
\[
e_f^-(\theta,j)=\begin{cases}
\displaystyle\frac{1}{(-\epsilon(p))^{(n+1)/2}p^{n(\frac{k-1}{2}-j)+\frac{k-1}{2}}\theta\chi^j(\log_{k-1}^-)}&\text{if }n\text{ is odd,}\\
\displaystyle\frac{p^{-j}+\epsilon(p)^{-1}p^{j-k}}{\chi^j(\log_{k-1}^-)}&\text{if }n=0.
\end{cases}
\]
\end{definition}


\subsection{Mixed plus and minus \texorpdfstring{$p$-adic $L$-functions}{p-adic L-functions} for \texorpdfstring{$V_m$}{Vm}}\label{sec:mixed_plus_minus}

We now define mixed plus and minus $p$-adic $L$-functions for $V_m$ with the decomposition of Corollary \ref{cor:decompoL} in mind.

Let $\mf{S}$ denote the set of all choices of $\pm$ for $i=0,1,\dots,\wt{r}-1$, so that $\#\mf{S}=2^{\wt{r}}$. For $\mf{s}\in\mf{S}$, let $\mathfrak{s}_i$ denote the $i$th choice and let $\mf{s}^\pm$ denote the number of plusses and minuses, respectively.

\begin{definition}\label{def:mixed_plus_minus}
Let $f_i$ be the modular forms as given in Proposition~\ref{prop:decomporep}. For an $\mf{s}\in\mf{S}$, define
\[
L_{V_m}^\mf{s}=\left(\prod_{i=0}^{\wt{r}-1}\Tw_{(r-i)(k-1)}\left(L_{f_i}^{\mf{s}_i}\right)\right)\cdot
	\begin{cases}
		L_{\varepsilon_K^r}	& m\text{ even,} \\
		1				& m\text{ odd.}
	\end{cases}
\]
\end{definition}

\begin{remark}
We have that $L_{V_m}^{\mf{s}}\in\Lambda_E(G_\infty)$ for $m$ odd; whereas if $m$ is even, then $L_{V_m}^{\mf{s}}\in\Lambda_E(G_\infty)$ unless $4|m$, in which case $(\gamma_0-1)(\gamma_0-\chi(\gamma_0))L_{V_m}^{\mf{s}}\in\Lambda_E(G_\infty)$.
\end{remark}

\begin{definition}\label{def:deltacritical}
	If $\theta$ is a Dirichlet character of conductor $p^n$, $j\in\ZZ$, and $\mf{s}\in\mf{S}$, then the character $\theta\chi^j$ will be called $\mf{s}$-critical (for $V_m$) if $(\theta,j)\in C_m$ and
	\[	\begin{cases}
			n=1\text{ or is even}		& \text{if }\mf{s}^-=0, \\
			n=0\text{ or is odd}		& \text{if }\mf{s}^+=0, \\
			n=0\text{ or }1			& \text{otherwise}.
		\end{cases}
	\]

	For $\mf{s}\in\mf{S}$ and for $\mf{s}$-critical $\theta\chi^j$, define
	\[ e^\mf{s}_{V_m}(\theta,j):=\left(\prod_{i=0}^{\wt{r}-1}e^{\mf{s}_i}_{f_i}(\theta,j+(r-i)(k-1))\right)\cdot
		\begin{cases}
			e_{\epsilon_K^r}(\theta,j)	&m\text{ even,} \\
			1					&m\text{ odd.}
		\end{cases}
	\]

	For all $(\theta,j)\in C_m$, let
	\[
		\Omega_m(\theta,j):=\left(\prod_{i=0}^{\wt{r}-1}\Omega_{f_i}(\theta,j+(r-i)(k-1))\right)\cdot
			\begin{cases}
				\Omega_{\epsilon_K^r}(\theta,j)	&m\text{ even,} \\
				1							&m\text{ odd.}
			\end{cases}
	\]
\end{definition}

\begin{theorem}\label{thm:inter}
Let $\mf{s}\in\mf{S}$. If $\theta\chi^j$ is $\mf{s}$-critical, then
\[
\theta\chi^j\left(L_{V_m}^\mf{s}\right)=e^\mf{s}_{V_m}(\theta,j)\frac{L(V_m,\theta^{-1},j)}{\Omega_m(\theta,j)}.
\]
\end{theorem}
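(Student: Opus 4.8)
The plan is to reduce the interpolation formula for $L_{V_m}^{\mf{s}}$ to the already-established interpolation formulae for the constituent pieces via the decompositions of Corollary~\ref{cor:decompoL} and Definition~\ref{def:mixed_plus_minus}. First I would observe that for a character $\lambda = \theta\chi^j$ of $G_\infty$, the twisting operator satisfies $\lambda\big(\Tw_{(r-i)(k-1)}(h)\big) = \theta\chi^{j+(r-i)(k-1)}(h)$ for any $h \in \HH_\infty(G_\infty)$; this is immediate from the definition of $\Tw_n$ (it sends $\sigma$ to $\chi(\sigma)^n\sigma$) and the definition of evaluation at a character. Consequently, applying $\theta\chi^j$ to the product defining $L_{V_m}^{\mf{s}}$ and using multiplicativity of character evaluation on products in $\HH_\infty(G_\infty)$, we get
\[
\theta\chi^j\big(L_{V_m}^{\mf{s}}\big) = \left(\prod_{i=0}^{\wt{r}-1}\theta\chi^{j+(r-i)(k-1)}\big(L_{f_i}^{\mf{s}_i}\big)\right)\cdot
\begin{cases} \theta\chi^j(L_{\varepsilon_K^r}) & m\text{ even,}\\ 1 & m\text{ odd.}\end{cases}
\]

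Next I would check that $\mf{s}$-criticality of $\theta\chi^j$ for $V_m$ guarantees that each factor on the right-hand side lies in the range where its interpolation formula is valid. For the $i$th factor, one needs $j + (r-i)(k-1) \in [1, (m-2i)(k-1)]$, which should follow from $(\theta,j) \in C_m$ — namely $-(k-1)+1 \le j \le k-1$ when $m$ is even (and $1 \le j \le k-1$ when $m$ is odd) — combined with $0 \le i \le \wt{r}-1$; since $f_i$ has weight $(m-2i)(k-1)+1$, the interval $[1,(m-2i)(k-1)]$ is exactly the critical range appearing in Theorem~\ref{thm:AV} and Lemma~\ref{lem:inter}. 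The condition on the conductor $p^n$ in the definition of $\mf{s}$-critical (Definition~\ref{def:deltacritical}) is precisely engineered so that $\theta\chi^{j+(r-i)(k-1)}$ is a plus-critical (resp.\ minus-critical) twist of $f_i$ whenever $\mf{s}_i = +$ (resp.\ $\mf{s}_i = -$), so Lemma~\ref{lem:inter} applies to give
\[
\theta\chi^{j+(r-i)(k-1)}\big(L_{f_i}^{\mf{s}_i}\big) = e^{\mf{s}_i}_{f_i}\big(\theta, j+(r-i)(k-1)\big)\cdot \frac{L\big(f_i, \theta^{-1}, j+(r-i)(k-1)\big)}{\Omega_{f_i}\big(\theta, j+(r-i)(k-1)\big)}.
\]
For the Dirichlet factor $L_{\varepsilon_K^r}$ (when $m$ is even), Theorem~\ref{thm:KL} applies once we verify the parity condition $\theta\chi^j(-1) = \sgn(j-\tfrac12)\varepsilon_K^r(-1)$; but this is built into the definition of $C_m$ when $m$ is even, after noting $\varepsilon_K(-1) = -1$ so $\varepsilon_K^r(-1) = (-1)^r$, matching the critical-twist condition in the Lemma on the Hodge structure of $V_m$.

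Finally I would multiply the factors together and match with the right-hand side of the claimed formula. The product of the $L$-value numerators is exactly $\prod_i L(f_i, \theta^{-1}, j+(r-i)(k-1))$ times $L(\varepsilon_K^r\theta^{-1}, j)$ (or $1$ for $m$ odd), which by Corollary~\ref{cor:decompoL} applied to the twisted form $V_m \otimes \theta^{-1}$ — equivalently, by the decomposition of Proposition~\ref{prop:decomporep} — equals $L(V_m, \theta^{-1}, j)$; the product of the $e$-factors is $e^{\mf{s}}_{V_m}(\theta,j)$ by its definition; and the product of the $\Omega$-factors is $\Omega_m(\theta,j)$ by its definition. Assembling these three identifications yields the claim. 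I expect the only genuinely nontrivial point to be the bookkeeping in the second paragraph: verifying that the conductor and exponent constraints packaged into the definition of $\mf{s}$-criticality really do land each $\theta\chi^{j+(r-i)(k-1)}$ in the correct (plus- or minus-critical) regime for $f_i$ simultaneously for all $i$, together with the edge cases where $\theta$ is trivial or has conductor $p$ (where one must use the trivial-character rows of Lemma~\ref{lem:inter} and the $L_{\triv}$ case of Theorem~\ref{thm:KL}, and where the factor $e^{\mf{s}_i}_{f_i}$ may legitimately vanish). Everything else is a formal consequence of multiplicativity of evaluation and the compatibility of $\Tw_n$ with character evaluation.
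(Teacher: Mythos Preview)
Your proposal is correct and follows exactly the approach the paper takes: the paper's own proof is the single sentence ``This follows from the definitions together with Theorem~\ref{thm:KL} and Lemma~\ref{lem:inter},'' and your argument is precisely the careful unpacking of that sentence via the twisting identity $\theta\chi^j\circ\Tw_n=\theta\chi^{j+n}$, the factor-by-factor application of Lemma~\ref{lem:inter} and Theorem~\ref{thm:KL}, and the reassembly of the $L$-value using Corollary~\ref{cor:decompoL}.
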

\begin{proof}
This follows from the definitions together with Theorem \ref{thm:KL} and Lemma \ref{lem:inter}.
\end{proof}

Let $\omega^a$ be a character on $\Delta$. If $L_{V_m}^\mf{s}=\sum_{\sigma\in\Delta,n\ge0}a_{\sigma,n}\sigma(\gamma_0-1)^n$, the $\omega^a$-isotypical component of $L_{V_m}^\mf{s}$ is given by
\[
 L_{V_m}^{\mf{s},\omega^a}:=\pi_{\omega^a}\cdot L_{V_m}^\mf{s}=\pi_{\omega^a}\sum_{n\ge0}\left(\sum_{\sigma\in\Delta}a_{\sigma,n}\eta(\sigma)\right)(\gamma_0-1)^n.
\]
Therefore, 
\[
\theta\chi^j\left(L_{V_m}^{\mf{s},\omega^a}\right)=
\begin{cases}
\theta\chi^j\left( L_{V_m}^\mf{s}\right)&\text{if $\theta\chi^j|_\Delta=\omega^a$,}\\
0&\text{otherwise.}
\end{cases}
\]

\begin{proposition}\label{prop:nonzero}
Let $\omega^a$ be a character on $\Delta$. We have $L_{V_m}^{\mf{s},\omega^a}\ne 0$. Moreover, if either of $\mf{s}^+$ or $\mf{s}^-$ vanishes, it is uniquely determined by its interpolation property given by Theorem~\ref{thm:inter}.
\end{proposition}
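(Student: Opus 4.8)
The plan is to prove non-vanishing factor by factor, using the product structure of $L_{V_m}^{\mf{s}}$ from Definition~\ref{def:mixed_plus_minus}, and then to extract the $\omega^a$-component. First I would recall that $L_{V_m}^{\mf{s}}$ is (up to the harmless factor $L_{\varepsilon_K^r}$ when $m$ is even) a product of twists $\Tw_{(r-i)(k-1)}(L_{f_i}^{\mf{s}_i})$, and that $\Tw_n$ is a bijection on $\HH_{r,E}(G_\infty)$, hence preserves non-vanishing; likewise $L_{\varepsilon_K^r}$ is a nonzero element (it is either in $\Lambda_{\mathcal O_E}(G_\infty)$ or differs from such by the nonzero factor $(\gamma_0-1)(\gamma_0-\chi(\gamma_0))$, by Theorem~\ref{thm:KL}). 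Since $\HH_\infty(G_\infty)$ is an integral domain (its nonzero elements are, after choosing a $\Delta$-component, nonzero power series over a field, which form a domain), it suffices to show each $L_{f_i}^{\mf{s}_i}\neq0$. For this I would invoke Lemma~\ref{lem:inter}: for any fixed $j\in[1,k-1]$ there are infinitely many Dirichlet characters $\theta$ of $p$-power conductor of the correct parity at which $\theta\chi^j(L_{f_i}^{\mf{s}_i})$ is a nonzero multiple of $L(f_i,\theta^{-1},j)/\Omega_{f_i}(\theta,j)$; since $f_i$ is a newform and these are critical $L$-values (away from the finitely many central-point vanishings), infinitely many are nonzero — indeed already $\chi^j(L_{f_i}^{\pm})\ne0$ for $j$ not at the central point, by the explicit formulas in Lemma~\ref{lem:inter}. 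As $L_{f_i}^{\mf{s}_i}\in\Lambda_E(G_\infty)$, having infinitely many nonzero values forces $L_{f_i}^{\mf{s}_i}\ne0$, whence $L_{V_m}^{\mf{s}}\ne0$.

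For the $\omega^a$-component, I would argue that $L_{V_m}^{\mf{s},\omega^a}=\pi_{\omega^a}L_{V_m}^{\mf{s}}\ne0$ because the characters $\theta\chi^j$ witnessing non-vanishing of $L_{V_m}^{\mf{s}}$ can be chosen with $\theta\chi^j|_\Delta=\omega^a$: fixing $j$, one is free to twist by $\theta$ of arbitrarily large even (resp. odd) $p$-power conductor, and such $\theta$ can be chosen so that $\theta\chi^j$ restricts to any prescribed character of $\Delta$ (the restriction map from characters of $p^n$-power conductor to characters of $\Delta$ is surjective once $n\ge1$). At such characters $\theta\chi^j(L_{V_m}^{\mf{s},\omega^a})=\theta\chi^j(L_{V_m}^{\mf{s}})$ by the displayed formula preceding the proposition, and the latter is nonzero for infinitely many such $\theta$ by the previous paragraph combined with Theorem~\ref{thm:inter}; since $L_{V_m}^{\mf{s},\omega^a}$ lies in a single $\Delta$-component of $\HH_\infty(G_\infty)$ (a power series ring over $E$), infinitely many nonzero values force it to be nonzero.

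For the uniqueness statement, suppose $\mf{s}^-=0$ (the case $\mf{s}^+=0$ is identical). Then every factor is $\Tw_{(r-i)(k-1)}(L_{f_i}^+)$ with $L_{f_i}^+\in\Lambda_E(G_\infty)$, and the remaining factor $L_{\varepsilon_K^r}$ lies in $\Lambda_E(G_\infty)$ too (the exceptional $4\mid m$ case has $\mf{s}^-=0$ impossible only when... — more precisely one notes $L_{V_m}^{\mf{s}}\in\Lambda_E(G_\infty)$ exactly as in the Remark after Definition~\ref{def:mixed_plus_minus}, and when $4\mid m$ one works with $(\gamma_0-1)(\gamma_0-\chi(\gamma_0))L_{V_m}^{\mf{s}}$), so $L_{V_m}^{\mf{s}}$ (or the indicated multiple) is an element of $\Lambda_E(G_\infty)$. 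An element of $\Lambda_E(G_\infty)$ is determined by its values at infinitely many characters $\theta\chi^j$; by Theorem~\ref{thm:inter} these values are prescribed at all $\mf{s}$-critical $\theta\chi^j$, and there are infinitely many of these (fix $j$, vary $\theta$ over all $p$-power conductors of even, resp. arbitrary, parity). Hence $L_{V_m}^{\mf{s}}$ is the unique element of $\Lambda_E(G_\infty)$ with those interpolation values, which is the assertion. The main obstacle is purely bookkeeping: keeping straight the parity conditions on the conductor of $\theta$ imposed by $\mf{s}$ and by membership in $C_m$, and confirming that these leave infinitely many admissible $\theta$ for a fixed $j$ — but this is immediate since for fixed $j\in[1,k-1]$ the set $C_m$ imposes only the single congruence $\theta\chi^j(-1)=\sgn(j-\tfrac12)(-1)^r$ (for $m$ even) or no condition at all (for $m$ odd), which is satisfied by characters of every sufficiently large $p$-power conductor.
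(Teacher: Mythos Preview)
Your uniqueness argument is fine and matches the paper's. The non-vanishing argument, however, has a real gap.

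First, $\HH_\infty(G_\infty)$ is \emph{not} an integral domain: it contains the group ring $E[\Delta]$, which has orthogonal idempotents $\pi_{\omega^a}$. So ``each factor is nonzero'' does not by itself give $L_{V_m}^{\mf{s}}\neq0$, let alone $L_{V_m}^{\mf{s},\omega^a}\neq0$. Your parenthetical (``after choosing a $\Delta$-component'') is the right instinct, but to use it you would need to know that each factor has nonzero $\omega^b$-component for \emph{every} $b$, which you have not established.

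Second, your attempt in the second paragraph to produce infinitely many characters $\theta\chi^j$ with $\theta\chi^j|_\Delta=\omega^a$ at which Theorem~\ref{thm:inter} gives a nonzero value breaks down when $\mf{s}$ is mixed (both $\mf{s}^+>0$ and $\mf{s}^->0$). By Definition~\ref{def:deltacritical}, an $\mf{s}$-critical $\theta\chi^j$ then has conductor $p^n$ with $n\in\{0,1\}$. For $n=1$ the factor $e_{f_i}^+(\theta,j)=0$ whenever $\mf{s}_i=+$, so $e_{V_m}^{\mf{s}}(\theta,j)=0$; for $n=0$ there is only $\theta=\mathbf{1}$, and $\chi^j|_\Delta=\omega^a$ forces $j\equiv a\pmod{p-1}$, which combined with the parity constraint in $C_m$ need not be satisfiable for a given $a$. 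So the interpolation formula cannot witness non-vanishing of the $\omega^a$-component in the mixed case.

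The paper's approach avoids this by working factor by factor at the level of isotypic components: one shows $L_{f_i}^{\pm,\omega^b}\neq0$ for \emph{all} $b$ (this is \cite[Lemma~6.5]{lei09}, and your interpolation argument does prove this for each individual $f_i$, since for a single sign there are infinitely many $\theta$ of the right conductor parity with any prescribed restriction to $\Delta$), and similarly $L_{\varepsilon_K^r}^{\omega^b}\neq0$. Since $\pi_{\omega^a}$ is a ring homomorphism onto $\pi_{\omega^a}\Lambda_E(G_\infty)\cong\Lambda_E(\Gamma)$ and $\Tw_n$ permutes the isotypic components, $L_{V_m}^{\mf{s},\omega^a}$ is then a product of nonzero elements in the integral domain $\Lambda_E(\Gamma)$ (or its fraction field), hence nonzero. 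You should reorganise along these lines.
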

\begin{proof}
The proof of \cite[Lemma~6.5]{lei09} shows that $L_{f_i}^{\pm,\omega^a}\ne0$ for all $i$. Furthermore, it is known that $L_{\epsilon_K^r}^{\omega^a}\neq0$. The same then follows for $L_{V_m}^{\mf{s},\omega^a}$ by definition. The uniqueness is immediate since $L_{V_m}^{\mf{s},\omega^a}\in \pi_{\omega^a}\Lambda_E(G_\infty)$ (or possibly its field of fractions) and when $\mf{s}$ consists of only plusses or minuses, there is at least one $j$ for which there are infinitely many $\theta$ such that $\theta\chi^j$ is $\mf{s}$-critical.
\end{proof}


\section{Selmer groups and the Main Conjecture of Iwasawa Theory}\label{sec:MC}
In this section, we formulate the Main Conjecture of Iwasawa Theory for $V_m$. Since we are in the non-ordinary setting, this involves a relation between the mixed plus and minus $p$-adic $L$-functions introduced in the previous section and the mixed plus and minus Selmer groups that we define below. We once again take advantage of the decomposition of Proposition~\ref{prop:decomporep} to define these Selmer groups of $V_m$ as the direct sum of the plus and minus Selmer groups of the $\epsilon_K^r$ and $V_{f_i}$. The latter were defined by the second author in \cite[Section~4]{lei09}. We then apply the results of Mazur--Wiles \cite{mazurwiles} and \cite[Section~7]{lei09}.


\subsection{Selmer groups}
Fix a $G_\QQ$-stable $\calO_E$-lattice $T_{\psi}$ of $V_\psi$. It gives rise to natural $\calO_E$-lattices $T_{\psi^{m-2i}}$ for $0\le i\le \wt{r}-1$, which in turn give rise to natural $\calO_E$-lattices $T_{f_i}$ in $V_{f_i}$ which are stable under $G_{\QQ}$. Write $V_{\epsilon_K^{r}}$ (resp. $T_{\epsilon_K^{r}}$) for the representation over $E$ (resp. $\calO_E$) associated to $\epsilon_K^{r}$ when $m$ is even. Let $A_{f_i}=V_{f_i}/T_{f_i}$, and $A_{\epsilon_K^{m/2}}=V_{\epsilon_K^{m/2}}/T_{\epsilon_K^{m/2}}$. We briefly remind the reader about the Selmer groups of $A_\triv^\ast$, $A_{\epsilon_K}^\ast$, and $A_{f_i}^\ast$.

Let $\theta$ denote $\triv$ or $\epsilon_K$ and let $F=\QQ$ or $K$, respectively. Classical Iwasawa theory studies the module $X_\infty:=\Gal(M_\infty/F_\infty)$, where $M_\infty$ is the maximal $p$-abelian extension of $F_\infty$ unramified outside of $p$. The arguments of \cite[Section~1]{G89} show that
\begin{equation}\label{eqn:thetaSelmerFinfty}
	H^1_f(F_\infty,A_\theta^\ast)\cong\Hom(X_\infty,A_\theta^\ast).
\end{equation}
In order to get the Selmer group over $k_\infty$ when $\theta=\epsilon_K$, we take the $\epsilon_K$-isotypic piece. Thus,
\begin{equation}\label{eqn:thetaSelmer}
	H^1_f(k_\infty,A_\theta^\ast)\cong\Hom(X_\infty,A_\theta^\ast)^\theta.
\end{equation}

We now make an additional hypothesis.
\begin{hypothesis}\label{hyp2}
For $0\le i\le \wt{r}-1$, we have $(p+1)\nmid (m-2i)(k-1)$.
\end{hypothesis}

\begin{lemma}
If Hypothesis~\ref{hyp2} holds, then $\left(A_{f_i}^\ast(j)\right)^{G_{\Qpn}}=0$ for all $0\le i\le \wt{r}-1$, $j\in\ZZ$ and $n\in\ZZ_{\ge0}$.
\end{lemma}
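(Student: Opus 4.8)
The statement is about the vanishing of $G_{\Qpn}$-invariants in Tate twists of $A_{f_i}^\ast$, where $f_i$ is a CM newform of weight $(m-2i)(k-1)+1$ with CM by $K$, level prime to $p$, non-ordinary at $p$. The plan is to reduce to a statement about the semisimplification of the mod-$p$ reduction of the Galois representation $\rho_{f_i}$ restricted to $G_{\Qpn}$, and then to rule out the existence of a nonzero invariant vector using the description of $\rho_{f_i}$ as an induced character together with the weight hypothesis. First I would recall that since $p$ is inert in $K$, the restriction $\rho_{f_i}|_{G_{\Qp}}$ is the induction from $G_{K_p}$ (with $K_p/\Qp$ the unramified quadratic extension) of a character; after twisting by $\chi^j$ and reducing mod $p$, an invariant vector under $G_{\Qpn}$ — which is a subgroup of $G_{\Qp}$ — would force the semisimplified reduction $\ol\rho_{f_i}^{\,\mathrm{ss}}(j)|_{G_{\Qpn}}$ to contain the trivial character.

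Next I would analyze the tame/wild structure of $G_{\Qpn}$ inside $G_{\Qp}$. The extension $\Qpn/\Qp$ is totally ramified of degree $p^{n-1}(p-1)$, so the inertia subgroup $I_{\Qpn}$ differs from $I_{\Qp}$ only by a pro-$p$ part and a tame piece of order prime to $p$; the relevant point is how the tame inertia characters of $\ol\rho_{f_i}$ restrict. Since $\ol\rho_{f_i}|_{I_{\Qp}}$ is (up to semisimplification and the wild part, which for the fundamental character of level $2$ is controlled) given by the $(m-2i)(k-1)$-th power of a fundamental character $\omega_2$ of level $2$ and its conjugate $\omega_2^p$, the restriction to $I_{\Qpn}$ multiplies the exponent by a factor coprime to $p+1$ — here Hypothesis~\ref{hyp2} enters: $(p+1)\nmid(m-2i)(k-1)$ guarantees that $\omega_2^{(m-2i)(k-1)}$ remains nontrivial on inertia even after restriction to $G_{\Qpn}$ and after any Tate twist by $\chi^j$ (since $\chi$ is tame of level $1$, twisting changes the exponent of $\omega_2$ by a multiple of $p+1$, hence cannot kill the obstruction coming from the level-$2$ character). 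I would then conclude that neither $\ol\rho_{f_i}^{\,\mathrm{ss}}(j)$ nor its restriction to $G_{\Qpn}$ can have the trivial character as a constituent, so $\left(A_{f_i}^\ast(j)\right)^{G_{\Qpn}}=0$ for all $j$ and all $n$.

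The main obstacle I anticipate is the careful bookkeeping of which fundamental characters appear in $\ol\rho_{f_i}|_{I_{\Qp}}$ and, precisely, how restriction along $I_{\Qpn}\subseteq I_{\Qp}$ acts on them — in particular confirming that the pro-$p$ (wild) part of $\Qpn/\Qp$ does not interfere, and that the residual tame degree is exactly coprime to $p+1$. One clean way to package this is to work with $\ol\rho_{f_i}|_{I_{\Qp}}\cong\omega_2^{a}\oplus\omega_2^{pa}$ with $a=(m-2i)(k-1)$ (using $a_p(f_i)=0$ and the crystalline/Fontaine--Laffaille description of weight-$(a+1)$ forms when $a < p-1$, or Saito's result and the general theory otherwise), observe that $\omega_2$ restricted to $I_{\Qpn}$ is $\omega_2$ composed with the norm-type map, which has image of index prime to $p+1$, and that $\omega_2^a$ is nontrivial on $I_{\Qpn}$ iff $(p+1)\nmid a$ — exactly Hypothesis~\ref{hyp2}. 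Twisting by $\chi^j$ only shifts $a$ by $j(p+1)/\text{(something)}$... more precisely replaces the pair by $\omega_2^{a+j(p+1)}\oplus\omega_2^{pa+jp(p+1)}$ on the level-2 part since $\chi|_{I_{\Qp}}=\omega_2^{p+1}$, so the hypothesis is stable under all Tate twists. Hence the invariants vanish.
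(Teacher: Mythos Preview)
Your argument is essentially correct and indeed proves the lemma; the paper's own proof, by contrast, is simply a one-line citation to \cite[Lemma~4.4]{lei09}, after observing that Hypothesis~\ref{hyp2} for $f_i$ (weight $(m-2i)(k-1)+1$) is exactly the weight condition ``Assumption~(2)'' of that reference. So you are reconstructing the content behind the cited lemma rather than taking a different route. For CM forms with $p$ inert in $K$, the induced structure $V_{f_i}|_{G_{\Qp}}\cong\Ind_{G_{K_p}}^{G_{\Qp}}\psi_{i,p}$ gives directly that $\ol\rho_{f_i}|_{I_{\Qp}}\cong\omega_2^{a}\oplus\omega_2^{pa}$ with $a=(m-2i)(k-1)$, valid for all weights; this is cleaner than invoking Fontaine--Laffaille (which only covers $a<p-1$) or appealing vaguely to ``Saito's result''.

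Two small points to tighten. First, the claim that the tame index of $I_{\Qpn}$ in $I_{\Qp}$ is ``coprime to $p+1$'' is not literally true: that index is $p-1$ and $\gcd(p-1,p+1)=2$. What you actually need (and what holds) is that $\omega_2|_{I_{\Qpn}}$ has order exactly $p+1$; this follows because $(p-1)\mid(p^2-1)$, so the image of $I_{\Qpn}$ under $\omega_2$ has index exactly $p-1$ in $\mathbb{F}_{p^2}^\times$, hence order $p+1$. Thus $\omega_2^{a}|_{I_{\Qpn}}=1$ iff $(p+1)\mid a$, as you claim. Second, the statement concerns $A_{f_i}^\ast$ rather than $A_{f_i}$, but since passing to the dual and twisting by $\chi$ only shift the exponent of $\omega_2$ by multiples of $p+1$, the divisibility condition is unaffected and your conclusion carries over unchanged.
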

\begin{proof}
Recall that $f_i$ is of weight $(m-2i)(k-1)+1$, so Hypothesis~\ref{hyp2} implies \cite[Assumption~(2)]{lei09}. Therefore, the result follows from \cite[Lemma~4.4]{lei09}.
\end{proof}

As in \cite[Section~4]{lei09}, the restriction map $H^1(\QQ_{p,\nu},T_{f_i}^\ast)\rightarrow H^1(\Qpn,T_{f_i}^\ast)$ is injective for any integers $n\ge \nu\ge 0$. On identifying the former as a subgroup of the latter, we define 
\begin{equation}\label{eq:jump}
H^{1,\pm}_f(\Qpn,T_{f_i}^\ast)=\left\{x\in H^1_f(\Qpn,T_{f_i}^\ast):\cor_{n/\nu+1}(x)\in H^1_f(\QQ_{p,\nu},T_{f_i}^\ast)\text{ for all }\nu\in S_n^\pm \right\}
\end{equation}
where $\cor$ denotes the corestriction map and
\begin{eqnarray*}
S_n^+&=&\{\nu\in[0,n-1]:\nu\text{ even}\},\\
S_n^-&=&\{\nu\in[0,n-1]:\nu\text{ odd}\}.
\end{eqnarray*}
Let 
\[
H^{1,\pm}_f(\Qpn,A_{f_i}^\ast)=H^{1,\pm}_f(\Qpn,T_{f_i}^\ast)\otimes E/\calO_E\subset H^1_f(\Qpn,A_{f_i}^\ast).
\]
The plus and minus Selmer groups over $k_n=\QQ(\mu_{p^n})$ are defined by
\[
H^{1,\pm}_f(k_n,A_{f_i}^\ast)=\ker\left(H^1_f(k_n,A_{f_i}^\ast)\rightarrow\frac{H^1(\Qpn,A_{f_i}^\ast)}{H^{1,\pm}_f(\Qpn,A_{f_i}^\ast)}\right)
\]
and those over $k_\infty=\QQ(\mu_{p^\infty})$ are defined by
\[
H^{1,\pm}_f(k_\infty,A_{f_i}^\ast)=\lim_{\longrightarrow}H^{1,\pm}_f(k_n,A_{f_i}^\ast),
\]
which can be identified as a subgroup of $H^1(k_\infty,A_{f_i}^\ast)$. For $j=0,1,\dots,(m-2i)(k-1)-1$, $\chi^j$ gives an isomorphism
\[ H^{1,\pm}_f(k_\infty,A_{f_i}^\ast)\cong H^{1,\pm}_f(k_\infty,A_{f_i}^\ast(j)).
\]
We use this to define the Tate twisted plus and minus Selmer groups of $A_{f_i}^\ast(j)$ by
\[ H^{1,\pm}_f(k_\infty,A_{f_i}^\ast(j)):=H^{1,\pm}_f(k_\infty,A_{f_i}^\ast)\otimes\chi^j.
\]

Now,
\[ T_m:=\left(\bigoplus_{i=0}^{\wt{r}-1}T_{f_i}((i-r)(k-1))\right)\oplus \begin{cases}
														T_{\varepsilon_K^r}	&	m\text{ even}\\
														0					 &	m\text{ odd}
													\end{cases}
\] 
is a $G_\QQ$-stable $\calO_E$-lattice in $V_m$. Let $A_m=V_m/T_m$. Since Selmer groups decompose under direct sums, we make the following definition for the mixed plus and minus Selmer groups of $A_m^\ast$.
\begin{definition}
	Let $\mf{s}\in\mf{S}$. Under Hypothesis \ref{hyp2}, we define
	\[ H^{1,\mf{s}}_f(k_\infty,A_m^\ast):=\left(\bigoplus_{i=0}^{\wt{r}-1}H^{1,\mf{s}_i}_f\left(k_\infty,A_{f_i}^\ast\left((r-i)(k-1)\right)\right)\right)\oplus \begin{cases}
														H^1_f(k_\infty,A_{\varepsilon_K^r}^\ast)	&	m\text{ even}\\
														0					 &	m\text{ odd}
													\end{cases}
	\]
\end{definition}


\subsection{Main conjectures}
We begin by recalling the results of \cite{mazurwiles,lei09} on the main conjectures for Dirichlet characters and CM newforms.

Write
\[ L_{\triv,a}=\frac{F_a}{G_a}
\]
with $F_a,G_a\in\Lambda_{\Zp}(\Gamma)$ and
\[ G_a=\begin{cases}
				\gamma_0-1					&a=0,	\\
				\gamma_0-\langle\gamma_0\rangle	&a=1,	\\
				1							&\text{otherwise.}
			\end{cases}
\]
To streamline the statement of the main conjecture, let
\[ \wt{L}_\triv:=\sum_{a\in\ZZ/(p-1)\ZZ}\pi_{\omega^a}F_a
\]
and $\wt{L}_{\epsilon_K}:=L_{\epsilon_K}$. The results of Mazur and Wiles are equivalent to the following theorem.
\begin{theorem}[\cite{mazurwiles}]\label{thm:MW}
Let $\theta=\triv$ or $\epsilon_K$. Then, for all $a\in\ZZ/(p-1)\ZZ$,
\[
	\Char_{\Lambda_{\Zp}(\Gamma)}\left(\left(H^1_f(k_\infty,A_\theta^\ast)^\vee\right)^{\omega^a}\right)=\left((\Tw_1(\wt{L}_\theta)^{\omega^a}\right).
\]
\end{theorem}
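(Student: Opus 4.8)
The plan is to reduce the stated theorem to the classical Main Conjecture of Mazur--Wiles by carefully matching up the two sides of the asserted equality of characteristic ideals, one $\omega^a$-component at a time. First I would recall the precise form in which Mazur and Wiles prove the Main Conjecture for the cyclotomic $\ZZ_p$-extension: for an even character, the characteristic ideal of the relevant Iwasawa module $X_\infty^{\omega^a}$ (or the minus part $X_\infty^{\omega^{1-a}}$, depending on conventions) equals the ideal generated by the associated Kubota--Leopoldt power series $L_{\xi}^{\KL}$. Via the identification \eqref{eqn:thetaSelmer}, $H^1_f(k_\infty, A_\theta^\ast)^\vee$ is (a twist of) $X_\infty$ when $\theta = \triv$, and the $\epsilon_K$-isotypic piece of $X_\infty$ when $\theta = \epsilon_K$; one then has to take Pontryagin duals and track how the $\Gal(k_\infty/\QQ)$-action is twisted by $A_\theta^\ast = V_\theta^\ast/T_\theta^\ast \cong \theta^{-1}\chi(1)$ modulo the lattice.

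Next I would unwind the definition of $\wt L_\theta$. For $\theta = \epsilon_K$ this is literally $L_{\epsilon_K}$, which by the construction in the proof of Theorem~\ref{thm:KL} is assembled from the twisted KL power series $\Tw_{-1}((L_{\epsilon_K^{-1}\omega^a}^{\KL})^\iota)$ or $L_{\epsilon_K\omega^{1-a}}^{\KL}$ according to parity. For $\theta = \triv$, the module $\wt L_\triv = \sum_a \pi_{\omega^a} F_a$ is the "numerator" obtained by clearing the denominators $G_a$ that encode the pole of the KL $p$-adic $L$-function of the trivial character at the trivial character itself; the point of passing to $\wt L_\triv$ is precisely that $H^1_f$ is torsion (no trivial-zero pole on the arithmetic side in the form we are using), so the characteristic ideal should match the numerator, not the whole fraction. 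I would then apply $\Tw_1$ to $\wt L_\theta$ and compare with the $\Tw_{-1}$, $\iota$, and Tate-twist operations relating $H^1_f(k_\infty, A_\theta^\ast)$ to $X_\infty$: the net effect of all these twists is the identity (this is the "bookkeeping" heart of the matter), so that $\Char((H^1_f(k_\infty,A_\theta^\ast)^\vee)^{\omega^a}) = (\Tw_1(\wt L_\theta)^{\omega^a})$ becomes, component by component, exactly the Mazur--Wiles statement $\Char(X_\infty^{(\cdot)}) = (L^{\KL}_{(\cdot)})$.

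The main obstacle, I expect, is not any single deep input --- Mazur--Wiles is a black box here --- but the normalization and twisting bookkeeping: getting the Teichm\"uller exponents right in the $\omega^a$-components (even versus odd $a$, reflection by $\omega \mapsto \omega^{1-a}$), tracking the involution $\iota$ and the operators $\Tw_{\pm1}$ through the isomorphisms \eqref{eqn:thetaSelmerFinfty} and \eqref{eqn:thetaSelmer}, and correctly handling the exceptional $a = 0, 1$ cases where denominators $G_a$ appear (so that one must argue the Selmer group is honestly $\Lambda$-torsion and its characteristic ideal sees only $F_a$). I would organize the proof by first treating $\theta = \epsilon_K$ (cleaner, since $\wt L_{\epsilon_K} = L_{\epsilon_K}$ and $\epsilon_K$ is a non-trivial even-or-odd character with no pole issue), then $\theta = \triv$, isolating the pole calculation; throughout, I would cite \cite[Section~1]{G89} for \eqref{eqn:thetaSelmerFinfty} and the standard dictionary between Selmer groups and classical Iwasawa modules, and \cite{iwasawa72} for the precise interpolation property pinning down $L^{\KL}$, so that the equality of ideals follows from uniqueness of the characteristic ideal together with the classical result.
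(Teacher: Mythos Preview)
Your proposal is correct and follows essentially the same approach as the paper: reduce to the classical Mazur--Wiles Main Conjecture via the identifications \eqref{eqn:thetaSelmerFinfty} and \eqref{eqn:thetaSelmer}, with the twist and component bookkeeping you describe. The paper's own proof is far terser---it simply invokes \eqref{eqn:thetaSelmerFinfty} to rewrite Mazur--Wiles as a statement about $H^1_f(F_\infty,A_\theta^\ast)$ and then applies \eqref{eqn:thetaSelmer} to descend to $k_\infty$---leaving implicit the normalization details (the $\Tw_1$, the $\iota$, the parity split in the definition of $L_{\eta,a}$, and the pole-clearing passage from $L_\triv$ to $\wt L_\triv$) that you spell out.
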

\begin{proof}
	In view of \eqref{eqn:thetaSelmerFinfty}, the results of Mazur--Wiles translate into
	\[ \Char_{\Lambda_{\Zp}(\Gamma)}\left(\left(H^1_f(F_\infty,A_\theta^\ast)^\vee\right)^{\theta\omega^a}\right)=\left((\Tw_1(\wt{L}_\theta))^{\omega^a}\right).
	\]
	Our statement then follows from \eqref{eqn:thetaSelmer}.
\end{proof}

Let 
\[
\wt{L}^\pm_{f_i}:=\sum_{a\in\ZZ/(p-1)\ZZ} \frac{\pi_{\omega^a}L^\pm_{f_i}}{\Tw_{-1}\left(\fl_{f_i,a}^\pm\right)},
\]
 where $\fl_{f_i,a}^\pm$ describe the images of the plus and minus Coleman maps, as explained in the appendix. The following theorem was proved by the second author.
\begin{theorem}[{\cite[Section~6]{lei09}}]\label{thm:MCCM}
Assume Hypothesis~\ref{hyp2}. Then, for all $a\in\ZZ/(p-1)\ZZ$, the module $\left(H^{1,\pm}_f\left(k_\infty,A_{f_i}^\ast((r-i)(k-1))\right)^\vee\right)^{\omega^a}$ is $\Lambda_{\Qp}(\Gamma)$-torsion and
\[
	\Char_{\Lambda_{E}(\Gamma)}\left(\left(H^{1,\pm}_f\left(k_\infty,A_{f_i}^\ast((r-i)(k-1))\right)^\vee\right)^{\omega^a}\right)\supset\left((\Tw_{(r-i)(k-1)+1}(\wt{L}^{\pm}_{f_i}))^{\omega^a}\right).
\]
Moreover, Kato's main conjecture for $f_i$ holds if and only if equality holds.
\end{theorem}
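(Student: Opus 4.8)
The strategy generalises Kobayashi's treatment of supersingular elliptic curves, in the higher-weight and CM form carried out in \cite{lei09}. Since $\chi^{(r-i)(k-1)}$ identifies the plus and minus Selmer groups of $A_{f_i}^\ast$ with those of $A_{f_i}^\ast((r-i)(k-1))$, and the operator $\Tw_{(r-i)(k-1)}$ intertwines the corresponding characteristic ideals, the first step is to reduce, one $f_i$ at a time, to the untwisted statement: for every $a\in\ZZ/(p-1)\ZZ$,
\[
\Char_{\Lambda_E(\Gamma)}\!\left(\left(H^{1,\pm}_f(k_\infty,A_{f_i}^\ast)^\vee\right)^{\omega^a}\right)\supseteq\left((\Tw_1(\wt{L}^\pm_{f_i}))^{\omega^a}\right),
\]
with equality equivalent to Kato's main conjecture for $f_i$. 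Here the residual $\Tw_1$ is the usual shift relating the $p$-adic $L$-function of a newform to the characteristic ideal of its Selmer group, and $\wt{L}^\pm_{f_i}$ carries the normalisation by the Coleman-map images $\fl^\pm_{f_i,a}$ described in the appendix.

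The central input is the plus/minus Coleman map machinery. Since $a_p(f_i)=0$ (Proposition~\ref{prop:decomporep}), and since Hypothesis~\ref{hyp2} supplies \cite[Assumption~(2)]{lei09} and hence the vanishing of the relevant local Galois invariants of $A_{f_i}^\ast(j)$, the plan is to decompose the local Iwasawa cohomology $\HIw(\Qp,T_{f_i}^\ast)$ by means of Coleman maps $\col^\pm$ whose kernels are exactly the local conditions defining $H^{1,\pm}_f(k_\infty,A_{f_i}^\ast)$ and whose images, component by component over $\Delta$, are the ideals $\fl^\pm_{f_i,a}$. One then invokes Kato's explicit reciprocity law to identify $\col^\pm(\loc_p(z))$, for Kato's zeta element $z$, with $L^\pm_{f_i}$ up to the $\fl^\pm$-normalisation; extracting the $\pm$-parts here is precisely Pollack's decomposition (Theorem~\ref{thm:pollack}) applied to $L_{f_i,\alpha}$ and $L_{f_i,\ol{\alpha}}$ with $\ol{\alpha}=-\alpha$.

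These inputs are then fed into Poitou--Tate global duality for $T_{f_i}^\ast$ over $k_\infty$: the descent exact sequence places $\left(H^{1,\pm}_f(k_\infty,A_{f_i}^\ast)^\vee\right)^{\omega^a}$ in an exact sequence involving the cokernel of $\col^\pm\circ\loc_p$ on the $\omega^a$-part of the $\Lambda$-module generated by Kato's Euler system, together with the global term $H^2_{\Iw}(\QQ,T_{f_i}^\ast)$, which Kato shows is $\Lambda$-torsion. Taking characteristic ideals and invoking Kato's divisibility for the Selmer group of $f_i$ yields $\Char\supseteq(\Tw_1\wt{L}^\pm_{f_i})$; the intervening inclusions collapse to equalities precisely when Kato's zeta element generates the relevant Iwasawa module, i.e.\ when Kato's main conjecture holds. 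The $\Lambda_{\Qp}(\Gamma)$-torsionness of the dual plus and minus Selmer groups follows either from that of $H^2_{\Iw}$, or a posteriori from the divisibility together with $L^\pm_{f_i}\ne0$ (\cite[Lemma~6.5]{lei09}).

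The step I expect to be hardest is pinning down the exact kernels and images of the Coleman maps $\col^\pm$ in this ramified, higher-weight setting, working separately over each character $\omega^a$ of $\Delta$: this is where Hypothesis~\ref{hyp2} genuinely enters, and it simultaneously underpins the local decomposition, the torsion statement, and the passage from Kato's classical divisibility to its plus and minus refinements.
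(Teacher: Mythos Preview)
The paper does not actually prove this theorem; it is stated as a citation of \cite[Section~6]{lei09} and no argument is supplied beyond the attribution. So there is no in-paper proof to compare your proposal against.

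That said, your sketch accurately reconstructs the strategy of \cite{lei09}: reduce to the untwisted case via $\Tw_{(r-i)(k-1)}$, use the plus/minus Coleman maps (with kernels giving the $\pm$ local conditions and images the ideals $\fl^\pm_{f_i,a}$), identify $\col^\pm(\loc_p(z_{\mathrm{Kato}}))$ with $L^\pm_{f_i}$ via the explicit reciprocity law and Pollack's decomposition, and conclude via Poitou--Tate together with Kato's divisibility. The role of Hypothesis~\ref{hyp2} as supplying \cite[Assumption~(2)]{lei09} is correctly identified. One small caution: the precise determination of $\im(\col^\pm)^{\omega^a}=\Lambda_E(\Gamma)\fl^\pm_{f,a}$ is the point the present paper revisits and corrects in its appendix, so if you were to write this out in full you would want to invoke the appendix rather than the original statement in \cite{lei09}.
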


For $\mf{s}\in\mf{S}$, define
\[
\wt{L}_{V_m}^\mf{s}=\left(\prod_{i=0}^{\wt{r}-1}\Tw_{(r-i)(k-1)}\left(\wt{L}_{f_i}^{\mf{s}_i}\right)\right)\cdot
	\begin{cases}
		\wt{L}_{\varepsilon_K^r}	& m\text{ even,} \\
		1				& m\text{ odd.}
	\end{cases}
\]
We are now ready to formulate the Main Conjecture of Iwasawa Theory for $V_m$.
\begin{conjecture}\label{MC}
For all $\mf{s}\in\mf{S}$ and $a\in\ZZ/(p-1)\ZZ$, we have equality
\[
	\Char_{\Lambda_{E}(\Gamma)}\left(\left(H^{1,\mf{s}}_f\left(k_\infty,A_m^\ast\right)^\vee\right)^{\omega^a}\right)=\left((\Tw_1(\wt{L}^{\mf{s}}_{V_m}))^{\omega^a}\right).
\]
\end{conjecture}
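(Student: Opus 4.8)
The plan is to reduce Conjecture~\ref{MC} to the main conjectures for the individual constituents of the decomposition in Proposition~\ref{prop:decomporep}, invoking Theorem~\ref{thm:MW} for the Dirichlet character $\varepsilon_K^r$ and Theorem~\ref{thm:MCCM} for each CM newform $f_i$. Carried out this way, one obtains unconditionally the inclusion $\supseteq$ in the asserted equality, with equality if and only if Kato's main conjecture holds for every $f_i$, $0\le i\le\wt{r}-1$. Thus the realistic target is this conditional statement; the full conjecture is not accessible without that extra arithmetic input. Throughout, Hypothesis~\ref{hyp2} is in force, so that the Selmer group $H^{1,\mf{s}}_f(k_\infty,A_m^\ast)$ and Theorem~\ref{thm:MCCM} are available.

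First I would record that both sides of the equality are assembled termwise from the data of Proposition~\ref{prop:decomporep}. On the arithmetic side, $H^{1,\mf{s}}_f(k_\infty,A_m^\ast)$ is \emph{defined} as the direct sum of the groups $H^{1,\mf{s}_i}_f(k_\infty,A_{f_i}^\ast((r-i)(k-1)))$ together with $H^1_f(k_\infty,A_{\varepsilon_K^r}^\ast)$ when $m$ is even, and Pontryagin duality, the isotypic projector $\pi_{\omega^a}$, and the formation of characteristic ideals are all compatible with finite direct sums; hence $\Char_{\Lambda_E(\Gamma)}$ of the $\omega^a$-component of the dual of the left-hand side is the product over $i$ of the corresponding quantities for the $A_{f_i}^\ast$-summands, times that for the $A_{\varepsilon_K^r}^\ast$-summand. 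On the analytic side, $\wt{L}_{V_m}^\mf{s}$ is by definition the product $\prod_i\Tw_{(r-i)(k-1)}(\wt{L}_{f_i}^{\mf{s}_i})\cdot\wt{L}_{\varepsilon_K^r}$, and since $\Tw_1$ is a ring endomorphism with $\Tw_1\circ\Tw_n=\Tw_{n+1}$, I would rewrite
\[
	\Tw_1(\wt{L}_{V_m}^\mf{s})=\prod_{i=0}^{\wt{r}-1}\Tw_{(r-i)(k-1)+1}(\wt{L}_{f_i}^{\mf{s}_i})\cdot\Tw_1(\wt{L}_{\varepsilon_K^r}).
\]
Since $\pi_{\omega^a}$ is a central (hence multiplicative) idempotent, its application to this product is computed factor by factor, and the factors so obtained match exactly those on the arithmetic side.

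Then I would compare the factors one at a time. The $\varepsilon_K^r$-factor is precisely the content of Theorem~\ref{thm:MW}, applied with $\theta=\varepsilon_K^r$ (that is, $\triv$ or $\varepsilon_K$ according to the parity of $r$, and after extending scalars from $\Zp$ to $\calO_E$), and it yields an honest equality. The $f_i$-factor is Theorem~\ref{thm:MCCM}, which gives the inclusion of characteristic ideals in one direction, becoming an equality exactly when Kato's main conjecture holds for $f_i$. Multiplying the factors back together gives the inclusion $\supseteq$ in Conjecture~\ref{MC} unconditionally, and the full equality if and only if Kato's main conjecture holds for all the $f_i$. The twists $\Tw_{(r-i)(k-1)+1}$ are already incorporated into the statement of Theorem~\ref{thm:MCCM}, so the (mild) bookkeeping of how twisting permutes the $\Delta$-characters $\omega^a$ needs no separate argument.

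The hard part is therefore not this bookkeeping --- it is essentially forced once the Selmer groups and the $p$-adic $L$-functions have been set up as direct sums and products --- but the remaining arithmetic input: Kato's main conjecture for the non-ordinary CM newforms $f_i$. This is the supersingular counterpart of the case of CM elliptic curves handled by Rubin and Pollack--Rubin, and one would expect its resolution to require adapting those methods to CM forms of higher weight; it is the genuine obstacle separating Conjecture~\ref{MC} from a theorem.
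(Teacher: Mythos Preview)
Your proposal is correct and matches the paper's treatment essentially line for line: the statement is a conjecture, and the paper, like you, proves only the inclusion $\supseteq$ (under Hypothesis~\ref{hyp2}) by using multiplicativity of characteristic ideals on direct sums together with Theorems~\ref{thm:MW} and~\ref{thm:MCCM}, and then remarks that equality holds if and only if Kato's main conjecture holds for each $f_i$. Your identification of the remaining obstacle---the supersingular analogue for higher-weight CM forms of the Pollack--Rubin argument---is also exactly what the paper points to.
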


We have the following result towards this conjecture.

\begin{theorem}
	Assume Hypothesis~\ref{hyp2}. Then, for all $\mf{s}\in\mf{S}$ and $a\in\ZZ/(p-1)\ZZ$, $\left(H^{1,\mf{s}}_f\left(k_\infty,A_m^\ast\right)^\vee\right)^{\omega^a}$ is $\Lambda_{\Qp}(\Gamma)$-torsion and
\[
	\Char_{\Lambda_{E}(\Gamma)}\left(\left(H^{1,\mf{s}}_f\left(k_\infty,A_m^\ast\right)^\vee\right)^{\omega^a}\right)\supset\left((\Tw_1(\wt{L}^{\mf{s}}_{V_m}))^{\omega^a}\right).
\]
\end{theorem}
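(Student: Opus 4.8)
The plan is to reduce the statement for $V_m$ to the already-known results for the constituents $V_{f_i}$ and $\epsilon_K^r$ via the direct sum decomposition of Proposition~\ref{prop:decomporep}. The starting point is the observation that characteristic ideals are multiplicative in short exact sequences, and in particular in finite direct sums: if $M=\bigoplus M_j$ is a direct sum of finitely generated torsion $\Lambda_E(\Gamma)$-modules, then $\Char(M)=\prod\Char(M_j)$. Since the mixed plus and minus Selmer group $H^{1,\mf{s}}_f(k_\infty,A_m^\ast)$ was \emph{defined} as the direct sum of the $H^{1,\mf{s}_i}_f(k_\infty,A_{f_i}^\ast((r-i)(k-1)))$ together with $H^1_f(k_\infty,A_{\varepsilon_K^r}^\ast)$ when $m$ is even, taking Pontryagin duals, $\omega^a$-isotypic components (which commute with direct sums), and characteristic ideals converts the direct sum decomposition into a product formula for $\Char$.

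First I would note that each summand is $\Lambda_{\Qp}(\Gamma)$-torsion: for the $f_i$ summands this is exactly the torsion assertion in Theorem~\ref{thm:MCCM} (which is available under Hypothesis~\ref{hyp2}, valid since $f_i$ has weight $(m-2i)(k-1)+1$), and for the $\epsilon_K^r$ summand it follows from Theorem~\ref{thm:MW}. Hence the direct sum $\left(H^{1,\mf{s}}_f(k_\infty,A_m^\ast)^\vee\right)^{\omega^a}$ is $\Lambda_{\Qp}(\Gamma)$-torsion as well, and $\Char_{\Lambda_E(\Gamma)}$ of it is the product of the $\Char$'s of the summands. Next, I would apply Theorem~\ref{thm:MCCM} to get
\[
\Char_{\Lambda_E(\Gamma)}\!\left(\!\left(H^{1,\mf{s}_i}_f\!\left(k_\infty,A_{f_i}^\ast((r-i)(k-1))\right)^{\!\vee}\right)^{\!\omega^a}\right)\supset\left((\Tw_{(r-i)(k-1)+1}(\wt{L}^{\mf{s}_i}_{f_i}))^{\omega^a}\right)
\]
and Theorem~\ref{thm:MW} to get the corresponding equality for the $\epsilon_K^r$ piece. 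Multiplying these containments (using that $\Char$ is multiplicative and that a product of ideals contained in given ideals is contained in the product) yields
\[
\Char_{\Lambda_E(\Gamma)}\!\left(\!\left(H^{1,\mf{s}}_f(k_\infty,A_m^\ast)^\vee\right)^{\!\omega^a}\right)\supset\left(\left(\prod_{i=0}^{\wt{r}-1}\Tw_{(r-i)(k-1)+1}(\wt{L}^{\mf{s}_i}_{f_i})\cdot\wt{L}_{\varepsilon_K^r}\right)^{\!\omega^a}\right)
\]
(with the final factor replaced by $1$ when $m$ is odd).

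The last step is purely formal bookkeeping: I need to identify the right-hand product with $(\Tw_1(\wt{L}^{\mf{s}}_{V_m}))^{\omega^a}$. By the definition of $\wt{L}^{\mf{s}}_{V_m}$ as $\left(\prod_i\Tw_{(r-i)(k-1)}(\wt{L}^{\mf{s}_i}_{f_i})\right)\cdot\wt{L}_{\varepsilon_K^r}$ (resp.\ without the last factor for $m$ odd), applying $\Tw_1$ and using that $\Tw$ is a ring homomorphism with $\Tw_1\circ\Tw_{(r-i)(k-1)}=\Tw_{(r-i)(k-1)+1}$, together with the compatibility of $\Tw_1$ with the idempotents $\pi_{\omega^a}$ (one checks $\Tw_1$ permutes the $\Delta$-isotypic components, but restricted to a fixed $\omega^a$-component after twisting it matches up as in the proof of Theorem~\ref{thm:MW}, where $\Tw_1$ already appears), the two sides agree. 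The only genuinely delicate point — and the one I would be most careful about — is the interaction of the twist $\Tw_1$ with the $\omega^a$-projection and the precise shift of the index $a$ it induces, exactly the subtlety that is already handled in Theorems~\ref{thm:MW} and~\ref{thm:MCCM}; here it simply has to be tracked consistently through the product. Everything else is a direct consequence of additivity of Selmer groups and multiplicativity of characteristic ideals, so there is no new arithmetic input beyond the cited theorems.
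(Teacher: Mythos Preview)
Your proposal is correct and follows essentially the same approach as the paper: use the multiplicativity of characteristic ideals on direct sums together with Theorems~\ref{thm:MW} and~\ref{thm:MCCM}. The paper's own proof is the single-line observation that $\Char_{\Lambda_E(\Gamma)}(M_1\oplus M_2)=\Char_{\Lambda_E(\Gamma)}(M_1)\cdot\Char_{\Lambda_E(\Gamma)}(M_2)$; your version simply spells out the torsion claim, the product of containments, and the twist bookkeeping more carefully.
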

\begin{proof}
	Since
	\[ \Char_{\Lambda_{E}(\Gamma)}(M_1\oplus M_2)=\Char_{\Lambda_{E}(\Gamma)}(M_1)\oplus\Char_{\Lambda_{\Qp}(\Gamma)}(M_2),
	\]
	the theorem follows from Theorems \ref{thm:MW} and \ref{thm:MCCM}.
\end{proof}

\begin{remark}
It is clear that Conjecture~\ref{MC} holds if and only if the main conjecture for $f_i$ holds for each $i$. In \cite{pollackrubin04}, the main conjecture was proved for elliptic curves defined over $\QQ$ and $a=0$. As stated in {\it op.\ cit.}, the same proof in fact works for all $a$. In \cite[Section~7]{lei09}, it is shown that one could similarly prove the main conjecture for all CM forms with rational coefficients under the assumption that $K$ is of class number $1$. Therefore, the main conjecture for $V_m$ holds when $f$ is defined over $\QQ$ and $K$ has class number $1$.
\end{remark}


\section{The admissible \texorpdfstring{$p$-adic $L$-functions}{p-adic L-functions}}\label{sec:exceptional}

In this section, we construct $p$-adic $L$-functions for the symmetric powers of CM newforms for non-ordinary primes, thus verifying the conjecture of Dabrowski--Panchishkin (\cite[Conjecture 1]{dabrowski11},\cite[Conjecture 6.2]{panchishkin94}). In \cite{lei10}, the second author used elliptic units to construct two $p$-adic $L$-functions for the symmetric square lying in $\HH_{k-1,E}(G_\infty)$, but only proved that they interpolate the $L$-values $L(\Sym^2(V_f),\theta^{-1},2k-2)$ (or equivalently $L(V_2,\theta^{-1},k-1)$). Here, we construct $2^{\wt{r}}$ $p$-adic $L$-functions for $V_m$ and we show that they have the expected interpolation property at all critical twists and the expected growth rate. We also relate them to the mixed plus and minus $p$-adic $L$-functions $L_{V_m}^\mf{s}$ introduced in Definition \ref{def:mixed_plus_minus}, thus providing an interesting analogue of Pollack's decomposition of $p$-adic $L$-functions of non-ordinary modular forms. As in \cite{harron12}, these $p$-adic $L$-functions are defined as products of $p$-adic $L$-functions of newforms and Dirichlet characters as given by the decomposition in Corollary \ref{cor:decompoL}. This approach was also taken by Dabrowski in \cite{dabrowski93} for $\Sym^m(V_f)$. We then go on to prove the exceptional zero conjecture for $V_m$ using a recent result of Denis Benois (\cite{benois??}).


\subsection{Definition and basic properties}
Recall that each newform $f_i$ in the decomposition of Corollary~\ref{cor:decompoL} has weight $k_i:=(m-2i)(k-1)+1$ and let $\epsilon_i$ denote its Nebentypus. It follows from the proof of Lemma \ref{lem:DcrisVm} that the roots of $x^2+\epsilon_i(p)p^{(m-2i)(k-1)}$ are $\pm p^{(r-i)(k-1)}$ if $m$ is even and $\pm\alpha p^{(r-i)(k-1)}$ if $m$ is odd. Accordingly, we let
\begin{equation}\label{eqn:alphai}
	\alpha_{i,\pm}:=\begin{cases}
				\pm p^{(r-i)(k-1)}	& \text{if }m\text{ is even,}\\
				\pm\alpha p^{(r-i)(k-1)}& \text{if }m\text{ is odd.}
			\end{cases}
\end{equation}
Since $h_i:=\ord_p(\alpha_{i,\pm})<k_i-1$, the result of Amice--V\'elu (Theorem \ref{thm:AV}) provides for each choice of $i$ and $\pm$ a $p$-adic $L$-function $L_{f_i,\pm}\in\mathcal{H}_{h_i,E}(G_\infty)$ (which should not be confused with the notation for Pollack's $p$-adic $L$-functions $L_{f_i}^\pm$). Recall the notation concerning elements of $\mf{S}$ as introduced in section \S\ref{sec:mixed_plus_minus}. Corollary~\ref{cor:decompoL} suggests the following definition.

\begin{definition}\label{def:admissible_p-adic_L}
For each $\mf{s}\in\mf{S}$, define
\begin{equation}
	L_{V_m,\mf{s}}=\left(\prod_{i=0}^{\wt{r}-1}\Tw_{(r-i)(k-1)}\left(L_{f_i,\mf{s}_i}\right)\right)\cdot\begin{cases}
					\displaystyle L_{\epsilon_K^r}	&\text{if }m\text{ is even,}\\
					1	&\text{if }m\text{ is odd.}
				\end{cases}
\end{equation}
\end{definition}
This gives $2^{\wt{r}}$ $p$-adic $L$-functions for $V_m$.
\begin{theorem}\label{thm:p-adicL_Vm}
	For each $\mf{s}\in\mf{S}$ and each $(\theta,j)\in C_m$, where $\theta$ has conductor $p^n$, we have
	\begin{equation}\label{eq:formula}
		\theta\chi^j(L_{V_m,\mf{s}})=e_{m,\mf{s}}(\theta,j)\frac{L(V_m,\theta^{-1},j)}{\Omega_m(\theta,j)},
	\end{equation}
	where
	\begin{align*}
		&e_{m,\mf{s}}(\theta,j)=\left(\prod_{i=0}^{\wt{r}-1}e_{f_i,\alpha_{i,\mf{s}_i}}(\theta,j+(r-i)(k-1))\right)\cdot
			\begin{cases}
				e_{\epsilon_K^r}(\theta,j)	&m\text{ even,} \\
				1					&m\text{ odd,}
			\end{cases}
			\\
			&=\begin{cases}
				\begin{array}{ll}
					\displaystyle\frac{1}{p^{n(k-1)\frac{r(r+1)}{2}}} &\hspace{-1em}\left((-1)^n\!\left(1-\theta^{-1}(p)p^{-j}\right)\left(1+\theta(p)p^{j-1}\right)\right)^{\mf{s}^-} \\
							&\times\left(\left(1+\theta^{-1}(p)p^{-j}\right)\left(1-\theta(p)p^{j-1}\right)\right)^{\mf{s}^+}\!\! e_{\epsilon_K^r}(\theta,j)
				\end{array}
				& m\text{ even,}
				\\
				\begin{array}{ll}
					\displaystyle\frac{1}{\left(\alpha^{r+1}p^{(k-1)\frac{r(r+1)}{2}}\right)^n} &\hspace{-1em}\left((-1)^n\!\cdot\left(1-\theta^{-1}(p)\alpha p^{-j}\right)\left(1+\frac{\theta(p)p^{j-1}}{\alpha}\right)\right)^{\mf{s}^-} \\
						&\times\left(\left(1+\theta^{-1}(p)\alpha p^{-j}\right)\left(1-\frac{\theta(p)p^{j-1}}{\alpha}\right)\right)^{\mf{s}^+}
				\end{array}
				& m\text{ odd}
			\end{cases}
	\end{align*}
	and $\Omega_m(\theta,j)$ is as in Definition \ref{def:deltacritical}.

	Furthermore,
	\begin{equation} L_{V_m,\mf{s}}\in\mc{H}_{(k-1)\frac{d^+d^-}{2},E}(G_\infty)\label{eq:growth}
	\end{equation}
	unless $4|m$, in which case
	\[ (\gamma_0-1)(\gamma_0-\chi(\gamma_0))L_{V_m,\mf{s}}\in\mc{H}_{(k-1)\frac{d^+d^-}{2},E}(G_\infty).
	\]
\end{theorem}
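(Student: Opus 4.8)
The plan is to prove each of the three assertions---the interpolation formula \eqref{eq:formula}, the explicit evaluation of $e_{m,\mf{s}}(\theta,j)$, and the growth bound \eqref{eq:growth}---by reducing everything through Definition~\ref{def:admissible_p-adic_L} to the corresponding statements for the individual $p$-adic $L$-functions $L_{f_i,\alpha_{i,\mf{s}_i}}$ and $L_{\epsilon_K^r}$, which are furnished by Theorems~\ref{thm:AV} and~\ref{thm:KL}. First I would establish the interpolation formula. Since $L_{V_m,\mf{s}}$ is the product of the $\Tw_{(r-i)(k-1)}(L_{f_i,\mf{s}_i})$ together with (for $m$ even) $L_{\epsilon_K^r}$, and since evaluation at $\theta\chi^j$ of a twist $\Tw_c$ of a measure is just evaluation of the original at $\theta\chi^{j+c}$, the value $\theta\chi^j(L_{V_m,\mf{s}})$ is the product over $i$ of $\theta\chi^{j+(r-i)(k-1)}(L_{f_i,\alpha_{i,\mf{s}_i}})$ with (if $m$ even) $\theta\chi^j(L_{\epsilon_K^r})$. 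One must check that if $(\theta,j)\in C_m$ then each of the shifted pairs $(\theta, j+(r-i)(k-1))$ lies in the valid interpolation range $[1,k_i-1]$ for $f_i$ and satisfies the parity condition of Theorem~\ref{thm:AV} (for $m$ even, one also needs $(\theta,j)$ to be in the interpolation range for the Dirichlet $L$-function $L(\varepsilon_K^r,-)$, which follows from the definition of $C_m$). This is a bookkeeping verification using the description of $C_m$ and $k_i=(m-2i)(k-1)+1$; I would do it once carefully. Applying Theorem~\ref{thm:AV} and Theorem~\ref{thm:KL} then yields \eqref{eq:formula} with $e_{m,\mf{s}}$ and $\Omega_m$ exactly as defined in Definition~\ref{def:deltacritical}, modulo unwinding the $\Omega$-factors into the product shape of $\Omega_m$.

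Next I would carry out the explicit computation of $e_{m,\mf{s}}(\theta,j)$. By Theorem~\ref{thm:AV},
\[
e_{f_i,\alpha_{i,\pm}}(\theta,j+(r-i)(k-1))=\left(\frac{p^{j+(r-i)(k-1)}}{\alpha_{i,\pm}}\right)^n\bigl(1-\theta^{-1}(p)\ol{\alpha_{i,\pm}}\,p^{-j-(r-i)(k-1)}\bigr)\bigl(1-\tfrac{\theta(p)p^{j+(r-i)(k-1)-1}}{\alpha_{i,\pm}}\bigr),
\]
and plugging in $\alpha_{i,\pm}=\pm p^{(r-i)(k-1)}$ (for $m$ even) or $\pm\alpha p^{(r-i)(k-1)}$ (for $m$ odd), with $\ol{\alpha_{i,\pm}}=-\alpha_{i,\pm}$, the powers of $p$ coming from $(r-i)(k-1)$ cancel inside the two Euler-type factors, leaving $\bigl(1\mp\theta^{-1}(p)p^{-j}\bigr)\bigl(1\mp\theta(p)p^{j-1}\bigr)$ (resp.\ with $\alpha$, $\alpha^{-1}$ inserted in the $m$ odd case). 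The prefactor $\bigl(p^{j+(r-i)(k-1)}/\alpha_{i,\pm}\bigr)^n=(\pm1)^n\,(p^j)^n$ (resp.\ $(\pm1)^n(p^j/\alpha)^n$); here one uses $(+1)^n=1$ and $(-1)^n$ matches the $(-1)^n$ appearing in the $\mf{s}^-$ factor of the claimed formula. Collecting the $\mf{s}^+$ contributions and the $\mf{s}^-$ contributions separately, and summing the exponents $\sum_{i=0}^{\wt r-1}(r-i)(k-1) = (k-1)\tfrac{r(r+1)}{2}$ (a standard arithmetic-series identity, using $\wt r - 1 = r$ when $m$ is odd and $\wt r-1=r-1$ when $m$ is even---one should check this gives the stated exponent in both cases, noting that for $m$ even the $i=r$ term contributes nothing) gives precisely the displayed closed form, with $e_{\epsilon_K^r}(\theta,j)$ tacked on for $m$ even.

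Finally I would establish the growth rate \eqref{eq:growth}. By Theorem~\ref{thm:AV}, $L_{f_i,\alpha_{i,\mf{s}_i}}\in\HH_{h_i,E}(G_\infty)$ with $h_i=\ord_p(\alpha_{i,\mf{s}_i})=(r-i)(k-1)$ for $m$ even and $h_i=\ord_p(\alpha)+(r-i)(k-1)=\tfrac{k-1}{2}+(r-i)(k-1)$ for $m$ odd (using $\ord_p(\alpha)=\tfrac{k-1}{2}$ since $\alpha^2=-\epsilon(p)p^{k-1}$). Twisting does not change the growth order, $\Tw_c$ preserves $\HH_{h,E}(G_\infty)$; and the product of elements of $\HH_{h,E}$ and $\HH_{h',E}$ lies in $\HH_{h+h',E}$. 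Since $L_{\epsilon_K^r}$ lies in $\Lambda_E(G_\infty)=\HH_{0,E}(G_\infty)$ (for $m$ even, nontrivial character) or needs the correcting factor $(\gamma_0-1)(\gamma_0-\chi(\gamma_0))$ when $\varepsilon_K^r$ is trivial, i.e.\ precisely when $4\mid m$, the total growth order is $\sum_i h_i$. Thus it remains to verify $\sum_{i=0}^{\wt r-1} h_i = (k-1)\tfrac{d^+d^-}{2}$ in each parity. For $m$ even, $\sum_i h_i = (k-1)\sum_{i=0}^{r-1}(r-i) = (k-1)\tfrac{r(r+1)}{2}$; one checks case-by-case on $m\bmod 4$ using the formulas for $d^\pm$ (namely $\{d^+,d^-\}=\{r,r+1\}$) that $d^+d^- = r(r+1)$. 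For $m$ odd, $d^+=d^-=\wt r=r+1$, so $(k-1)\tfrac{d^+d^-}{2} = (k-1)\tfrac{(r+1)^2}{2}$, and $\sum_{i=0}^{r}h_i = \sum_{i=0}^r\bigl(\tfrac{k-1}{2}+(r-i)(k-1)\bigr) = (r+1)\tfrac{k-1}{2} + (k-1)\tfrac{r(r+1)}{2} = (k-1)\tfrac{(r+1)(r+2)}{2}$; here I expect a discrepancy to surface and I would recheck the value of $\ord_p(\alpha_{i,\pm})$ against Lemma~\ref{lem:DcrisVm} and the normalization of $h_i$---this reconciliation of the growth exponent is the step most likely to require care, and is the main obstacle. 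Once the arithmetic identity $\sum h_i = (k-1)\tfrac{d^+d^-}{2}$ is confirmed, \eqref{eq:growth} follows immediately, and the $4\mid m$ exceptional factor is exactly the denominator of $L_\triv$ inherited from Theorem~\ref{thm:KL}.
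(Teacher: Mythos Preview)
Your approach is essentially identical to the paper's: reduce to the factors via Definition~\ref{def:admissible_p-adic_L}, invoke Theorems~\ref{thm:AV} and~\ref{thm:KL} for the interpolation formula after checking that $C_m$ lands in the valid range for each factor, and obtain the growth bound by summing the $h_i$ (using additivity under products and invariance under $\Tw$). The paper's proof is equally brief on all of these points.

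The ``main obstacle'' you flag at the end is not real; it is an arithmetic slip on your part. In the odd case you correctly compute
\[
\sum_{i=0}^{r} h_i \;=\; (r+1)\frac{k-1}{2} + (k-1)\frac{r(r+1)}{2}
\;=\; \frac{k-1}{2}\bigl((r+1) + r(r+1)\bigr)
\;=\; \frac{(k-1)(r+1)(1+r)}{2}
\;=\; \frac{(k-1)(r+1)^2}{2},
\]
not $(k-1)\tfrac{(r+1)(r+2)}{2}$ as you wrote. Since for $m$ odd one has $d^+=d^-=\wt r=r+1$, this matches $(k-1)\tfrac{d^+d^-}{2}$ on the nose, and no reconciliation with Lemma~\ref{lem:DcrisVm} is needed. (Incidentally, the paper packages both parities uniformly by writing $h_i=(\tfrac{m}{2}-i)(k-1)$, which for $m$ odd equals $(r-i+\tfrac12)(k-1)=\tfrac{k-1}{2}+(r-i)(k-1)$, agreeing with your expression.) A smaller point: in your simplification of the Euler factors you write $(1\mp\theta^{-1}(p)p^{-j})(1\mp\theta(p)p^{j-1})$, but the two signs are in fact opposite---for $\alpha_{i,+}$ one gets $(1+\theta^{-1}(p)p^{-j})(1-\theta(p)p^{j-1})$ and for $\alpha_{i,-}$ the signs swap---which is what the statement records.
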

\begin{remark}
	When $m\geq2$, $L_{V_m,\mf{s}}$ is \textit{not} uniquely determined by its interpolation property. Indeed, an element of $\mc{H}_a$ needs to satisfy an interpolation property at $\theta\chi^j$ for at least $\lfloor a+1\rfloor$ choices of $j$. When $m\geq4$ is even, $(k-1)\frac{d^+d^-}{2}\geq3(k-1)$ is greater than the number of distinct $j$ such that $(\theta,j)\in C_m$ (this number being $2(k-1)$). Similarly, when $m$ is odd, the latter number is only $k-1$ which is less than $(k-1)\frac{d^+d^-}{2}\geq2(k-1)$. When $m=2$, $(k-1)\frac{d^+d^-}{2}=2(k-1)$, but the parity condition on $(\theta,j)$ implies that only half of the $j$'s can be used.
\end{remark}
\begin{proof}
	One can easily verify that $C_m$ is a subset of all the pairs $(\theta,j)$ at which the elements $\Tw_{(r-i)(k-1)}\left(L_{f_i,\mf{s}_i}\right)$ and $L_{\epsilon_K^r}$ satisfy the interpolation properties of Theorems \ref{thm:AV} and \ref{thm:KL}, respectively. The interpolation property for $L_{V_m,\mf{s}}$ then follows immediately from its definition.

	As for the growth condition, Theorems \ref{thm:AV} and \ref{thm:KL} tell us that
	\[ \Tw_{(r-i)(k-1)}\left(L_{f_i,\mf{s}_i}\right)\in\mc{H}_{(\frac{m}{2}-i)(k-1),E}(G_\infty)
	\]
	(since the twisting operation does not affect the growth) and $L_{\epsilon_K^r}\in\Frac(\mc{H}_0)$. When taking a product, the growth rates are additive; hence, the growth of $L_{V_m,\mf{s}}$ is
	\[ \sum_{i=0}^{\wt{r}-1}\left(\frac{m}{2}-i\right)(k-1).
	\]
	A simple summation then gives the stated growth rate. The statements about being in $\mc{H}_{(k-1)\frac{d^+d^-}{2},E}(G_\infty)$ rather than its fraction field follow by the corresponding statements in Theorems \ref{thm:AV} and \ref{thm:KL}.
\end{proof}

\begin{proposition}
Let $\omega^a$ be a character of $\Delta$. For each $\mf{s}$, the $\omega^a$-isotypical component of $L_{V_m,\mf{s}}$ is non-zero. 
\end{proposition}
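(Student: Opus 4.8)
The plan is to reduce the claim to the non-vanishing of the $\omega^a$-isotypical components of the factors appearing in Definition~\ref{def:admissible_p-adic_L}, exactly as was done for the mixed plus and minus $p$-adic $L$-functions in Proposition~\ref{prop:nonzero}. Indeed, taking $\omega^a$-isotypical components is compatible with products in the sense that $\pi_{\omega^a}$ is an idempotent and $\HH_\infty(G_\infty)$ (or its field of fractions) is a product of the integral domains $\pi_{\omega^b}\HH_\infty(G_\infty)$ over $b \in \ZZ/(p-1)\ZZ$; more precisely, $\left(L_{V_m,\mf{s}}\right)^{\omega^a}$ is obtained by multiplying together the $\omega^{a_i}$-components of the factors $\Tw_{(r-i)(k-1)}(L_{f_i,\mf{s}_i})$ and (when $m$ is even) of $L_{\epsilon_K^r}$, where the shifts $a_i$ are determined by $a$ and the twisting exponents $(r-i)(k-1)$. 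Since twisting by $\Tw_n$ is a bijection on $\HH_{r,E}(G_\infty)$ that permutes the $\Delta$-isotypical components, it suffices to show each $L_{f_i,\mf{s}_i}^{\omega^{a_i}} \ne 0$ and $L_{\epsilon_K^r}^{\omega^{a_i}} \ne 0$, because the relevant ambient ring has no zero divisors in a fixed isotypical component.

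First I would record the algebraic fact that for any $\nu$, the $\nu$-component map $h \mapsto \pi_\nu h$ is a ring homomorphism onto $\pi_\nu \HH_\infty(G_\infty)$, which is an integral domain (being isomorphic to a ring of power series over $\CC_p$ in one variable, or rather to $\HH_{\infty,E}$ after fixing $E$), and that $\Tw_n$ sends $\pi_{\omega^b}\HH_\infty(G_\infty)$ onto $\pi_{\omega^{b+n}}\HH_\infty(G_\infty)$ isomorphically. Next I would invoke the non-vanishing of $L_{f_i,\alpha_{i,\mf{s}_i}}^{\omega^{a_i}}$: this follows because $L_{f_i,\alpha_{i,\mf{s}_i}} = L_{f_i}^+\log^+_{k_i-1} + \alpha_{i,\mf{s}_i} L_{f_i}^-\log^-_{k_i-1}$ by Theorem~\ref{thm:pollack}, and $L_{f_i}^{\pm,\omega^{a_i}} \ne 0$ by the proof of \cite[Lemma~6.5]{lei09} (as already used in Proposition~\ref{prop:nonzero}), together with the fact that $\log^\pm_{k_i-1}$ is a nonzero element of $\HH_\infty(G_\infty)$ whose $\omega^{a_i}$-component is nonzero — alternatively, and more directly, one simply uses that $L_{f_i,\alpha_{i,\mf{s}_i}}$ satisfies the interpolation formula of Theorem~\ref{thm:AV} at $\theta\chi^j$ for infinitely many $\theta$ with a fixed restriction to $\Delta$, and that not all the corresponding $L$-values $L(f_i,\theta^{-1},j)$ can vanish (this is a standard non-vanishing result; the relevant values are forced nonzero by Theorem~\ref{thm:KL} in the abelian CM case, or one appeals directly to \cite[Lemma~6.5]{lei09}). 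Then $L_{\epsilon_K^r}^{\omega^{a_i}} \ne 0$ is classical (it is a $p$-adic $L$-function of a Dirichlet character whose isotypical components are nonzero Iwasawa functions, up to the one pole coming from the trivial character, cf. Theorem~\ref{thm:KL} and the remark used in Proposition~\ref{prop:nonzero}).

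The proof is then completed by multiplying: since $\left(L_{V_m,\mf{s}}\right)^{\omega^a}$ equals, up to the twisting isomorphisms, a product of nonzero elements of the integral domain $\HH_{\infty,E}$ (after clearing the at most one pole, which does not affect nonvanishing), it is nonzero. I do not anticipate a genuine obstacle here; the statement is a direct consequence of the construction together with results already cited. The only mildly delicate point is bookkeeping: one must check that when $4 \mid m$ the factor $L_{\epsilon_K^r}$ (which is $L_{\triv}$ when $r$ is even, hence has a pole) still has nonzero $\omega^a$-component in $\Frac(\HH_{\infty,E})$ — but this is immediate since $(\gamma_0-1)(\gamma_0-\chi(\gamma_0))L_{\triv}$ is a nonzero element of $\Lambda_E(G_\infty)$ with nonzero isotypical components by Theorem~\ref{thm:KL}. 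Hence $L_{V_m,\mf{s}}^{\omega^a} \ne 0$ for every $a$, as claimed.
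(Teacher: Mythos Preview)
Your proposal is correct and takes essentially the same approach as the paper, whose entire proof reads ``This follows from the same proof as Proposition~\ref{prop:nonzero}.'' You have simply unpacked that one line---reducing to the nonvanishing of the individual factors $L_{f_i,\alpha_{i,\mf{s}_i}}^{\omega^b}$ and $L_{\epsilon_K^r}^{\omega^b}$ via the integral-domain structure of each isotypical component and the twisting bijections---though note a small bookkeeping slip: the $\omega^a$-component of a product in $\HH_\infty(G_\infty)\cong\prod_b\pi_{\omega^b}\HH_\infty(\Gamma)$ is the product of the $\omega^a$-components of the factors (not of shifted components $\omega^{a_i}$); the shift only enters when you pass \emph{through} $\Tw_{(r-i)(k-1)}$ to reduce to components of $L_{f_i,\mf{s}_i}$ itself.
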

\begin{proof}
This follows from the same proof as Proposition~\ref{prop:nonzero}.
\end{proof}


\subsection{A conjecture of Dabrowski--Panchishkin}
	We begin this section by computing the generalized Hasse invariant $h_p(V_m)$ of $V_m$. Recall that $h_p(V_m):=\max(P_N(d^\pm,V_m)-P_H(d^\pm,V_m))$ (see e.g.\ \cite[\S5]{panchishkin94}). We obtain the following from lemmas \ref{lem:HodgePoly} and \ref{lem:NewtonPoly}.

	\begin{lemma}
		For all $m\geq2$,
		\[ h_p(V_m)=(k-1)\frac{d^+d^-}{2}.
		\]
	\end{lemma}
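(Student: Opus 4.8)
The plan is a direct computation from Lemmas~\ref{lem:HodgePoly} and~\ref{lem:NewtonPoly}: by definition $h_p(V_m)$ is the larger of $P_N(d^+,V_m)-P_H(d^+,V_m)$ and $P_N(d^-,V_m)-P_H(d^-,V_m)$, so I would evaluate the Newton and Hodge polygons at the two abscissae $d^+$ and $d^-$, compare, and split into the cases $m$ even and $m$ odd.

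When $m$ is even, $\wt{r}=r$, so the formulas for $d^\pm$ give $\{d^+,d^-\}=\{r,r+1\}$, and hence $d^+d^-=r(r+1)$ regardless of which is which. By Lemma~\ref{lem:NewtonPoly} the Newton polygon vanishes identically, so $h_p(V_m)=-\min\!\bigl(P_H(r,V_m),\,P_H(r+1,V_m)\bigr)$. I would then note that the Hodge polygon is flat on $[r,r+1]$ — the segment there has slope $0$, corresponding to the middle Hodge piece $H^{0,0}$ — so these two values coincide and equal the minimum of the convex Hodge polygon; plugging $a=r-1$ (equivalently $a=r$) into Lemma~\ref{lem:HodgePoly} gives $P_H(r,V_m)=P_H(r+1,V_m)=-(k-1)\frac{r(r+1)}{2}$. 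Therefore $h_p(V_m)=(k-1)\frac{r(r+1)}{2}=(k-1)\frac{d^+d^-}{2}$.

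When $m$ is odd, $d^+=d^-=\wt{r}=r+1$, so the maximum reduces to the single value $P_N(r+1,V_m)-P_H(r+1,V_m)$. By Lemma~\ref{lem:NewtonPoly} the Newton polygon is the line through the origin of slope $\frac{k-1}{2}$, so $P_N(r+1,V_m)=\frac{(k-1)(r+1)}{2}$, while Lemma~\ref{lem:HodgePoly} at $a=r$ gives $P_H(r+1,V_m)=-(k-1)\frac{r(r+1)}{2}$. Adding, $h_p(V_m)=\frac{(k-1)(r+1)}{2}+(k-1)\frac{r(r+1)}{2}=(k-1)\frac{(r+1)^2}{2}=(k-1)\frac{d^+d^-}{2}$.

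There is essentially no obstacle: the whole argument is bookkeeping with the two polygons. The one point I would take care to isolate is the coincidence $P_H(d^+,V_m)=P_H(d^-,V_m)$ in the even case, which is what makes the ``$\max$'' in the definition of $h_p$ unambiguous; it reflects the symmetry of the Hodge--Tate weights of $V_m$ about $0$, equivalently the presence of the length-one slope-$0$ segment at the bottom of the Hodge polygon.
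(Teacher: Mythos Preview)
Your argument is correct and follows exactly the paper's approach: evaluate the Newton and Hodge polygons at $d^\pm$ via Lemmas~\ref{lem:HodgePoly} and~\ref{lem:NewtonPoly}, splitting into the even and odd cases. Your extra observation that the slope-$0$ segment forces $P_H(r,V_m)=P_H(r+1,V_m)$ in the even case is a nice clarification of a step the paper leaves as ``a simple computation.''
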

	\begin{proof}
		If $m$ is even, $P_N(x,V_m)$ is identically zero, so $h_p(V_m)=\max(-P_H(d^\pm,V_M))$. A simple computation shows that
		\[ h_p(V_m)=-P_H(r+1,V_m)=(k-1)\frac{r(r+1)}{2},
		\]
		as desired. When $m$ is odd, we have $d^+=d^-=r+1$. Thus,
		\[ h_p(V_m)=P_N(r+1,V_m)-P_H(r+1,V_m)=(r+1)\frac{k-1}{2}-(k-1)\frac{(r+1)(-r)}{2},
		\]
		which yields the result.
	\end{proof}
	This lemma, together with \eqref{eq:growth} of Theorem \ref{thm:p-adicL_Vm}, shows that $L_{V_m,\mf{s}}$ has the growth property predicted by \cite[Conjecture 1(iv)]{dabrowski11} and \cite[Conjecture 6.2(iv)]{panchishkin94}, and \eqref{eq:formula} of Theorem \ref{thm:p-adicL_Vm} shows that it satisfies the expected interpolation property.


\subsection{Decomposition into mixed plus and minus \texorpdfstring{$p$-adic $L$-functions}{p-adic L-functions}}

We generalize the decomposition of Theorem \ref{thm:pollack}, due to Pollack, to our setting. In other words, we decompose each of the $2^{\wt{r}}$ $p$-adic $L$-functions $L_{V_m,\mf{s}}$ introduced in Definition \ref{def:admissible_p-adic_L} as a linear combination of the $2^{\wt{r}}$ $p$-adic $L$-functions $L_{V_m}^\mf{s}$ given in Definition \ref{def:mixed_plus_minus}.

Let
\[
	\ell_i^+:=\log^+_{k_i-1}\text{ and }\ell_i^-:=\alpha_{i,+}\log^-_{k_i-1}.
\]
Then, \eqref{eqn:Lf+-} becomes,
\begin{equation}
	L_{f_i}^{\mf{s}_i}=\frac{L_{f_i,+}+\mf{s}_iL_{f_i,-}}{2\ell_i^{\mf{s_i}}}\label{eq:L+-},
\end{equation}
and we have the following result.
\begin{lemma}\label{lem:expansion}
For all $\mf{s}\in\mf{S}$, we have 
\begin{equation}\label{eq:expand}
2^{\wt{r}}\ell_{V_m}^\mf{s}L_{V_m}^\mf{s}=\sum_{\mf{t}\in\mf{S}}a_{\mf{s},\mf{t}}L_{V_m,\mf{t}}
\end{equation}
where
\[ \ell_{V_m}^\mf{s}=\prod_{i=0}^{\wt{r}-1}\Tw_{(r-i)(k-1)}\left(\ell_i^{\mf{s}_i}\right)
\]
and $a_{\mf{s},\mf{t}}\in\{+1,-1\}$. Moreover, $a_{\mf{s},\mf{t}}$ is given by $(-1)^{b_{\mf{s},\mf{t}}}$ where $b_{\mf{s},\mf{t}}$ is the number of $i\in[0,\wt{r}-1]$ such that $\mf{s}_i=\mf{t}_i=-$.
\end{lemma}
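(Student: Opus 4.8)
The plan is to prove the identity by a direct expansion, working one factor at a time and tracking signs combinatorially. First I would substitute the definitions: by Definition~\ref{def:mixed_plus_minus}, $L_{V_m}^\mf{s}=\left(\prod_{i}\Tw_{(r-i)(k-1)}(L_{f_i}^{\mf{s}_i})\right)\cdot L_{\varepsilon_K^r}$ (with the Dirichlet factor replaced by $1$ when $m$ is odd), and similarly $L_{V_m,\mf{t}}=\left(\prod_i\Tw_{(r-i)(k-1)}(L_{f_i,\mf{t}_i})\right)\cdot L_{\varepsilon_K^r}$. Since the Dirichlet factor $L_{\varepsilon_K^r}$ and the twisting operators $\Tw_{(r-i)(k-1)}$ appear on both sides of \eqref{eq:expand}, and $\Tw$ is a ring homomorphism on $\HH_{r,E}(G_\infty)$, the identity reduces to showing
\[
2^{\wt{r}}\,\ell_{V_m}^\mf{s}\prod_{i=0}^{\wt{r}-1}\Tw_{(r-i)(k-1)}\!\left(L_{f_i}^{\mf{s}_i}\right)=\sum_{\mf{t}\in\mf{S}}a_{\mf{s},\mf{t}}\prod_{i=0}^{\wt{r}-1}\Tw_{(r-i)(k-1)}\!\left(L_{f_i,\mf{t}_i}\right),
\]
where $\ell_{V_m}^\mf{s}=\prod_i\Tw_{(r-i)(k-1)}(\ell_i^{\mf{s}_i})$. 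After pulling the $\Tw$ operators out of each factor, it suffices to prove the factored identity $2^{\wt{r}}\prod_i\ell_i^{\mf{s}_i}L_{f_i}^{\mf{s}_i}=\sum_{\mf{t}\in\mf{S}}a_{\mf{s},\mf{t}}\prod_i L_{f_i,\mf{t}_i}$ inside $\HH_\infty(G_\infty)$.

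Next I would use \eqref{eq:L+-}, which gives $2\ell_i^{\mf{s}_i}L_{f_i}^{\mf{s}_i}=L_{f_i,+}+\mf{s}_i L_{f_i,-}$ for each $i$ (here I read $\mf{s}_i\in\{+1,-1\}$). Taking the product over $i=0,\dots,\wt{r}-1$ yields
\[
2^{\wt{r}}\prod_{i=0}^{\wt{r}-1}\ell_i^{\mf{s}_i}L_{f_i}^{\mf{s}_i}=\prod_{i=0}^{\wt{r}-1}\left(L_{f_i,+}+\mf{s}_i L_{f_i,-}\right).
\]
Expanding the product on the right over all ways of choosing, for each $i$, either the $L_{f_i,+}$ term or the $\mf{s}_i L_{f_i,-}$ term, the choices are indexed precisely by $\mf{t}\in\mf{S}$: the choice $\mf{t}_i=+$ selects $L_{f_i,+}$ and contributes coefficient $1$, while $\mf{t}_i=-$ selects $L_{f_i,-}$ and contributes the scalar $\mf{s}_i$. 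Hence the coefficient of $\prod_i L_{f_i,\mf{t}_i}$ is $\prod_{i:\,\mf{t}_i=-}\mf{s}_i$, which equals $(-1)^{b_{\mf{s},\mf{t}}}$ where $b_{\mf{s},\mf{t}}=\#\{i:\mf{t}_i=-\text{ and }\mf{s}_i=-\}$, since an $i$ with $\mf{t}_i=-$ contributes $\mf{s}_i=-1$ exactly when $\mf{s}_i=-$ and $+1$ otherwise. This is exactly the claimed $a_{\mf{s},\mf{t}}=(-1)^{b_{\mf{s},\mf{t}}}\in\{+1,-1\}$, finishing the factored identity; reapplying $\Tw_{(r-i)(k-1)}$ and multiplying back by $L_{\varepsilon_K^r}$ gives \eqref{eq:expand}.

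This argument is essentially formal, so there is no serious obstacle; the only point requiring a little care is the bookkeeping of $\alpha_{i,+}$ inside $\ell_i^- = \alpha_{i,+}\log^-_{k_i-1}$ versus the $\ol{\alpha}_i=-\alpha_{i,+}$ appearing in \eqref{eqn:Lf+-}, which is precisely what produces the scalar $\mf{s}_i$ (rather than, say, an $\alpha$-dependent factor) in \eqref{eq:L+-}; one should double-check that \eqref{eq:L+-} is stated consistently with \eqref{eqn:Lf+-} under the substitution $\ell_i^\pm = \log^+_{k_i-1}$, $\alpha_{i,+}\log^-_{k_i-1}$. One should also note that the manipulations take place in $\HH_\infty(G_\infty)$ (or its fraction field when $4\mid m$, owing to the $L_{\varepsilon_K^r}$ factor), where all the operations used — the ring structure, the twist homomorphisms $\Tw_n$, and multiplication by the logarithm elements $\log^\pm_b$ — are defined, so no convergence or integrality subtleties arise.
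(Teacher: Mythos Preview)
Your proof is correct and follows essentially the same approach as the paper: substitute \eqref{eq:L+-} into Definition~\ref{def:mixed_plus_minus}, expand the product of the binomials $L_{f_i,+}+\mf{s}_iL_{f_i,-}$ over all choices $\mf{t}\in\mf{S}$, and read off the sign $\prod_{i:\,\mf{t}_i=-}\mf{s}_i=(-1)^{b_{\mf{s},\mf{t}}}$. One small presentational point: the reduction ``it suffices to prove the factored identity'' (with all twists removed) is slightly loose, since different factors carry different $\Tw_{(r-i)(k-1)}$; what actually makes the argument work is that \eqref{eq:L+-} holds factor by factor, so its $\Tw_{(r-i)(k-1)}$-image also holds for each $i$, and then the product-and-expand step goes through verbatim with the twisted factors.
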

\begin{proof}
On substituting \eqref{eq:L+-} into the definition of $L_{V_m}^{\mf{s}}$, we have
\[
L_{V_m}^\mf{s}=\left(\prod_{i=0}^{\wt{r}-1}\Tw_{(r-i)(k-1)}\left(\frac{L_{f_i,+}+\mf{s}_iL_{f_i,-}}{2\ell_i^{\mf{s}_i}}\right)\right)\cdot\begin{cases}
					\displaystyle L_{\epsilon_K^r}	&\text{if }m\text{ is even,}\\
					1	&\text{if }m\text{ is odd.}
				\end{cases}
\]
Hence, upon expanding, we obtain \eqref{eq:expand}. To find $a_{\mf{s},\mf{t}}$, we observe that the sign in front of $L_{V_m,\mf{t}}$ is given by
\[
\prod_{\substack{i=0 \\ \mf{t}_i=-}}^{\wt{r}-1}\mf{s}_i,
\]
so we are done.
\end{proof}

Let $\mf{A}$ be the $2^{\wt{r}}\times 2^{\wt{r}}$ matrix $(a_{\mf{s},\mf{t}})_{(\mf{s},\mf{t})\in\mf{S}\times\mf{S}}$.

\begin{proposition}
The matrix $\mf{A}$ is symmetric and has orthogonal columns and rows.
\end{proposition}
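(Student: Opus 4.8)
The key observation is that the matrix $\mf{A}$ has an explicit combinatorial description: indexing $\mf{S}$ by subsets of $\{0,1,\dots,\wt{r}-1\}$ via $\mf{s}\leftrightarrow S:=\{i:\mf{s}_i=-\}$, Lemma~\ref{lem:expansion} gives $a_{\mf{s},\mf{t}}=(-1)^{|S\cap T|}$. The plan is to recognize this as (a scalar multiple of) the $\wt{r}$-fold tensor power of the $2\times 2$ matrix $H:=\begin{pmatrix}1&1\\1&-1\end{pmatrix}$. Indeed, writing each index as a bit-string in $\{+,-\}^{\wt{r}}$ and ordering $\mf{S}$ lexicographically, one checks directly from $a_{\mf{s},\mf{t}}=\prod_{i:\,\mf{t}_i=-}\mf{s}_i$ that $\mf{A}=H^{\otimes\wt{r}}$, since $(H)_{\mf{s}_i,\mf{t}_i}$ equals $1$ unless $\mf{s}_i=\mf{t}_i=-$, in which case it is $-1$, and the Kronecker product multiplies these entrywise over $i$.

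First I would establish this tensor-product identification, which reduces everything to properties of $H$. Symmetry of $\mf{A}$ is then immediate: $H$ is symmetric, and the tensor power of symmetric matrices is symmetric (equivalently, $(-1)^{|S\cap T|}$ is manifestly symmetric in $S,T$). For the orthogonality of rows and columns, I would use that $H^2 = 2I_2$, hence $\mf{A}^2 = (H^{\otimes\wt{r}})^2 = (H^2)^{\otimes\wt{r}} = (2I_2)^{\otimes\wt{r}} = 2^{\wt{r}} I_{2^{\wt{r}}}$. Combined with symmetry, $\mf{A}^T\mf{A} = \mf{A}^2 = 2^{\wt{r}} I$, which says precisely that the columns of $\mf{A}$ are pairwise orthogonal (each of squared norm $2^{\wt{r}}$); transposing gives the same for rows. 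One can alternatively give a one-line direct proof that $\sum_{\mf{s}\in\mf{S}} a_{\mf{s},\mf{t}}a_{\mf{s},\mf{t}'} = \sum_{S}(-1)^{|S\cap(T\triangle T')|} = 2^{\wt{r}}\delta_{T,T'}$ by the standard character-sum computation on the group $(\ZZ/2\ZZ)^{\wt{r}}$, but routing through $H^2=2I$ is cleanest.

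The only mild subtlety — and the one step I would be careful to state explicitly rather than wave at — is the bookkeeping that converts the product formula $a_{\mf{s},\mf{t}}=\prod_{i:\,\mf{t}_i=-}\mf{s}_i$ from Lemma~\ref{lem:expansion} into the entrywise (Kronecker) form; here one must fix once and for all the bijection between $\mf{S}$ and $\{+,-\}^{\wt{r}}$ and the induced lexicographic ordering, and verify that under this ordering the $(i)$th coordinate contributes the factor $H_{\mf{s}_i,\mf{t}_i}$. This is routine but is where an error would creep in, so it is the "main obstacle," such as it is. Everything else is formal. I would present the argument in the tensor-product language, as it makes both assertions of the proposition transparent and, as a bonus, records that $\mf{A}/2^{\wt{r}/2}$ is orthogonal, which is what will presumably be needed to invert the system~\eqref{eq:expand}.
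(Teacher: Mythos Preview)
Your proof is correct but takes a genuinely different route from the paper's. The paper argues directly and combinatorially: symmetry is read off from the formula $a_{\mf{s},\mf{t}}=(-1)^{b_{\mf{s},\mf{t}}}$, and for orthogonality the authors fix $\mf{t}\neq\mf{u}$, set $I=\{i:\mf{t}_i\neq\mf{u}_i\}$, and count that the number of $\mf{s}$ with $b_{\mf{s},\mf{t}}\equiv b_{\mf{s},\mf{u}}\pmod{2}$ is $2^{\#I-1}\cdot 2^{\wt{r}-\#I}=2^{\wt{r}-1}$, whence the inner product $\sum_{\mf{s}}a_{\mf{s},\mf{t}}a_{\mf{s},\mf{u}}$ vanishes. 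Your approach instead identifies $\mf{A}$ structurally as the Hadamard matrix $H^{\otimes\wt{r}}$ and reduces both claims to the trivial facts $H=H^{\mathrm{T}}$ and $H^2=2I_2$. Your method is slicker and immediately delivers $\mf{A}^2=2^{\wt{r}}I$, which is exactly the content of the paper's subsequent Corollary~\ref{cor:inverse}; the paper's hands-on count avoids introducing the tensor formalism but then has to revisit the diagonal entries separately to get the inverse. Your alternative ``character-sum'' remark, rewriting the inner product as $\sum_S(-1)^{|S\cap(T\triangle T')|}$, is essentially the paper's counting argument phrased in group-theoretic language.
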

\begin{proof}
It is clear from the description of $a_{\mf{s},\mf{t}}$ in Lemma~\ref{lem:expansion} that $\mf{A}$ is symmetric. For the orthogonality, we show that for $\mf{t}\ne\mf{u}$,
\begin{equation}\label{eq:half}
\#\{\mf{s}\in\mf{S}:a_{\mf{s},\mf{t}}=a_{\mf{s},\mf{u}}\}=\#\{\mf{s}\in\mf{S}:a_{\mf{s},\mf{t}}=-a_{\mf{s},\mf{u}}\}=2^{\wt{r}-1},
\end{equation}
which would imply that the $(\mf{t},\mf{u})$-entry
\[
\sum_{\mf{s}\in\mf{S}}a_{\mf{s},\mf{t}}a_{\mf{s},\mf{u}}
\]
of $\mf{A}\mf{A}^\text{T}$ is zero

Let $I=\{i:\mf{t}_i\ne\mf{u}_i\}$. By definition, $\#I\ge1$. Let $\mf{s}\in\mf{S}$ be such that $a_{\mf{s},\mf{t}}=a_{\mf{s},\mf{u}}$. By Lemma~\ref{lem:expansion},
\begin{equation}\label{eq:parity}
b_{\mf{s},\mf{t}}\equiv b_{\mf{s},\mf{u}}\mod2.
\end{equation}
For each $i\in I$, either $(\mf{t}_i,\mf{u}_i)=(+,-)$ or $(-,+)$. Hence, each $i\in I$ such that $\mf{s}_i=-$ contributes a ``one" to one side of the equation in \eqref{eq:parity}, but not to the other side. But for $i\notin I$, each $\mf{s}_i=-$ would contribute a ``one" to both sides.  Therefore, in order for \eqref{eq:parity} to hold, $\#\{i\in I:\mf{s}_i=-\}$ must be even, but for $i\notin I$, no such condition is required. Hence, the number of choices coming from $i\in I$ is given by
\[ \sum_{n\text{ even}}\binom{\#I}{n}=2^{\#I-1}
\]
and for $i\not\in I$, there are $2^{\wt{r}-\#I}$ choices. This gives $2^{\wt{r}-1}$ choices for $\mf{s}$ such that $a_{\mf{s},\mf{t}}=a_{\mf{s},\mf{u}}$. Since this is half of $\mf{S}$, the other half must have $a_{\mf{s},\mf{t}}=-a_{\mf{s},\mf{u}}$.
\end{proof}

\begin{corollary}\label{cor:inverse}
The matrix $\mf{A}$ is invertible with inverse $2^{-\wt{r}}\mf{A}$.
\end{corollary}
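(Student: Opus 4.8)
The plan is to deduce the corollary directly from the preceding proposition, which asserts that $\mf{A}$ is symmetric with pairwise orthogonal rows and columns. First I would compute the diagonal entries of $\mf{A}\mf{A}^{\mathrm{T}}$: the $(\mf{t},\mf{t})$-entry is $\sum_{\mf{s}\in\mf{S}}a_{\mf{s},\mf{t}}^2$, and since each $a_{\mf{s},\mf{t}}\in\{+1,-1\}$ by Lemma~\ref{lem:expansion}, every term is $1$, so this sum equals $\#\mf{S}=2^{\wt{r}}$. Combined with the vanishing of the off-diagonal entries established in the proposition, this gives $\mf{A}\mf{A}^{\mathrm{T}}=2^{\wt{r}}\cdot\mathrm{Id}$.

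Next, using that $\mf{A}$ is symmetric (so $\mf{A}^{\mathrm{T}}=\mf{A}$), this reads $\mf{A}^2=2^{\wt{r}}\cdot\mathrm{Id}$, hence $\mf{A}\cdot(2^{-\wt{r}}\mf{A})=\mathrm{Id}$, which is exactly the assertion that $\mf{A}$ is invertible with inverse $2^{-\wt{r}}\mf{A}$. This is essentially a one-line argument once the proposition is in hand.

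There is no real obstacle here; the only thing to be careful about is bookkeeping — confirming that the entries are genuinely $\pm1$ (not $0$) so that the diagonal computation is clean, and invoking symmetry to pass from $\mf{A}\mf{A}^{\mathrm{T}}$ to $\mf{A}^2$. All of the substantive combinatorial work (the orthogonality relation \eqref{eq:half} counting argument) has already been done in the proof of the proposition, so the corollary is purely formal. If one wanted, one could phrase it even more slickly by noting $2^{-\wt{r}/2}\mf{A}$ is an involution, but the stated form is the cleanest to record.

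\begin{proof}
By Lemma~\ref{lem:expansion}, every entry $a_{\mf{s},\mf{t}}$ of $\mf{A}$ lies in $\{+1,-1\}$, so the $(\mf{t},\mf{t})$-entry of $\mf{A}\mf{A}^{\mathrm{T}}$ is $\sum_{\mf{s}\in\mf{S}}a_{\mf{s},\mf{t}}^2=\#\mf{S}=2^{\wt{r}}$. Together with the orthogonality of the rows proved above, this gives $\mf{A}\mf{A}^{\mathrm{T}}=2^{\wt{r}}\cdot\mathrm{Id}$. Since $\mf{A}$ is symmetric, $\mf{A}^{\mathrm{T}}=\mf{A}$, whence $\mf{A}\cdot\left(2^{-\wt{r}}\mf{A}\right)=\mathrm{Id}$, as claimed.
\end{proof}
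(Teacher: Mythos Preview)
Your proof is correct and follows essentially the same approach as the paper: both use that the entries are $\pm1$ and that the proposition gives symmetry and orthogonality of rows/columns, then compute the diagonal entries of $\mf{A}^2$ (equivalently $\mf{A}\mf{A}^{\mathrm{T}}$) as $\sum_{\mf{s}}a_{\mf{s},\mf{t}}^2=2^{\wt{r}}$ to conclude.
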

\begin{proof}
Since all entries of $\mf{A}$ are $+1$ or $-1$, the fact that $\mf{A}$ is symmetric and has orthogonal columns and rows implies that $\mf{A}^2$ is a diagonal matrix. The $(\mf{t},\mf{t})$-entry of $\mf{A}^2$ is
\[ \sum_{\mf{s}\in\mf{S}}a_{\mf{s},\mf{t}}^2=2^{\wt{r}},
\]
hence the result.
\end{proof}

\begin{corollary}
For all $\mf{t}\in\mf{S}$, we have an expansion
\[
L_{V_m,\mf{t}}=\sum_{s\in\mf{S}}a_{\mf{s},\mf{t}}\ell_{V_m}^\mf{s}L_{V_m}^\mf{s}.
\]
\end{corollary}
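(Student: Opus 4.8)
The plan is to invert the linear relation established in Lemma~\ref{lem:expansion}. That lemma gives, for every $\mf{s}\in\mf{S}$, the identity
\[
2^{\wt{r}}\ell_{V_m}^\mf{s}L_{V_m}^\mf{s}=\sum_{\mf{t}\in\mf{S}}a_{\mf{s},\mf{t}}L_{V_m,\mf{t}},
\]
which we may read as a single matrix equation: if $x$ denotes the column vector $\left(2^{\wt{r}}\ell_{V_m}^\mf{s}L_{V_m}^\mf{s}\right)_{\mf{s}\in\mf{S}}$ and $y$ the column vector $\left(L_{V_m,\mf{t}}\right)_{\mf{t}\in\mf{S}}$ (entries in the ring $\HH_\infty(G_\infty)$, or its field of fractions when $4\mid m$), then $x=\mf{A}y$. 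By Corollary~\ref{cor:inverse}, $\mf{A}$ is invertible with $\mf{A}^{-1}=2^{-\wt{r}}\mf{A}$, so $y=2^{-\wt{r}}\mf{A}x$.

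Unwinding this componentwise is the whole proof. The $\mf{t}$-th component of $y=2^{-\wt{r}}\mf{A}x$ reads
\[
L_{V_m,\mf{t}}=2^{-\wt{r}}\sum_{\mf{s}\in\mf{S}}a_{\mf{t},\mf{s}}\,x_{\mf{s}}
=2^{-\wt{r}}\sum_{\mf{s}\in\mf{S}}a_{\mf{t},\mf{s}}\cdot 2^{\wt{r}}\ell_{V_m}^\mf{s}L_{V_m}^\mf{s}
=\sum_{\mf{s}\in\mf{S}}a_{\mf{t},\mf{s}}\,\ell_{V_m}^\mf{s}L_{V_m}^\mf{s},
\]
and since $\mf{A}$ is symmetric we have $a_{\mf{t},\mf{s}}=a_{\mf{s},\mf{t}}$, which gives exactly the claimed expansion. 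One small point worth spelling out: the entries live in a commutative ring (or its total ring of fractions in the case $4\mid m$), so multiplying the vector identity of Lemma~\ref{lem:expansion} on the left by the matrix $2^{-\wt{r}}\mf{A}$ is legitimate; no noncommutativity issues arise because $\mf{A}$ has scalar (indeed $\{\pm1\}$) entries.

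There is essentially no obstacle here: the content was already extracted into Lemma~\ref{lem:expansion} and Corollary~\ref{cor:inverse}, and this corollary is just the bookkeeping step that records the inverse relation. The only thing to be careful about is the indexing convention in the matrix product and the use of symmetry of $\mf{A}$ to rewrite $a_{\mf{t},\mf{s}}$ as $a_{\mf{s},\mf{t}}$ so that the statement matches the form in which it will be applied later (where one wants $L_{V_m,\mf{t}}$ expressed with the coefficient $a_{\mf{s},\mf{t}}$ attached to $\ell_{V_m}^\mf{s}L_{V_m}^\mf{s}$).
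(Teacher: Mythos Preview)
Your proof is correct and follows exactly the paper's approach: the paper simply states that the result follows immediately from Lemma~\ref{lem:expansion} and Corollary~\ref{cor:inverse}, and your write-up just makes this matrix inversion explicit. Your added remark about using the symmetry of $\mf{A}$ to rewrite $a_{\mf{t},\mf{s}}$ as $a_{\mf{s},\mf{t}}$ is a nice touch that the paper leaves implicit.
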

\begin{proof}
This follows immediately from Lemma~\ref{lem:expansion} and Corollary~\ref{cor:inverse}.
\end{proof}


\subsection{Exceptional zeros and \texorpdfstring{$L$}{L}-invariants}\label{subsec:exceptional}
	In this section, we will use the main result of \cite{benois??} to determine the (analytic) $L$-invariants of $V_m$. It follows that these $L$-invariants are given by Benois's (arithmetic) $L$-invariant (as defined in \cite{benois11}). We also discuss a case that does not fit into the standard situation of trivial zeroes, namely when $4|m$ we can say something at a couple of non-critical twists.

	From now on, if $m$ is odd and $\alpha=\pm p^{\frac{k-1}{2}}$, then we fix the choice $\alpha=p^{\frac{k-1}{2}}$. We can read off the following from Theorems \ref{thm:KL} and \ref{thm:AV}.

	\begin{lemma}\label{lem:TrivZeroesOfPieces}\mbox{}
		\begin{enumerate}
			\item The $p$-adic $L$-function $L_{\epsilon_K^r}$ has no trivial zeroes.
			\item If $m$ is even, then $\Tw_{(r-i)(k-1)}\left(L_{f_i,+}\right)$ (resp., $\Tw_{(r-i)(k-1)}L_{f_i,-}$) has a trivial zero at the critical twist $(\theta,j)$ if and only if $j=1$ (resp., $j=0$) and $\theta=\mathbf{1}$.
			\item If $m$ is odd, then $\Tw_{(r-i)(k-1)}\left(L_{f_i,+}\right)$ (resp., $\Tw_{(r-i)(k-1)}L_{f_i,-}$) has a trivial zero at the critical twist $(\theta,j)$ if and only if $j=\frac{k+1}{2}$ (resp., $j=\frac{k-1}{2}$), $\theta=\mathbf{1}$, $k$ is odd, and $\alpha=p^{\frac{k-1}{2}}$.
		\end{enumerate}
	\end{lemma}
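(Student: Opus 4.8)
The plan is to read off the trivial zeroes directly from the interpolation formulas in Theorems \ref{thm:KL} and \ref{thm:AV}, recalling that a \emph{trivial} (or exceptional) zero occurs precisely when the Euler-type factor $e_\eta(\theta,j)$ or $e_{f,\alpha}(\theta,j)$ vanishes, as opposed to a genuine zero coming from the vanishing of the complex $L$-value. For part (i), I would simply note that the factor $e_{\epsilon_K^r}(\theta,j)$ from Theorem \ref{thm:KL} is, in the relevant ranges, either a power of $p^j/\eta(p)$ times $1-p^{j-1}(\eta^{-1}\theta)(p)$ (for $j\geq 1$) or $1-p^{-j}(\eta\theta^{-1})(p)$ (for $j\leq 0$); since $\eta=\epsilon_K^r$ is a quadratic character of conductor prime to $p$ and $\epsilon_K(p)=-1$, we have $\eta(p)=\pm1$, so these factors are $1-(\pm1)p^{j-1}$ or $1-(\pm1)p^{-j}$, which never vanish for $p$ odd in the critical range determined by $C_m$. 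Hence $L_{\epsilon_K^r}$ has no trivial zeroes.

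For parts (ii) and (iii), the key observation is that $L_{f_i,\pm}$ is the Amice--V\'elu $p$-adic $L$-function attached to the root $\alpha_{i,\pm}$ of $x^2+\epsilon_i(p)p^{(m-2i)(k-1)}$, so its interpolation factor at $(\theta,j')$ (with $j'=j+(r-i)(k-1)$ the shifted variable, the shift being exactly what $\Tw_{(r-i)(k-1)}$ undoes) is
\[
	e_{f_i,\alpha_{i,\pm}}(\theta,j') = \left(\frac{p^{j'}}{\alpha_{i,\pm}}\right)^n\left(1-\theta^{-1}(p)\,\ol{\alpha_{i,\pm}}\,p^{-j'}\right)\left(1-\frac{\theta(p)p^{j'-1}}{\alpha_{i,\pm}}\right).
\]
A trivial zero at a critical twist occurs exactly when one of the two parenthesized Euler factors vanishes; since $\theta$ of conductor $p^n>1$ forces $\theta(p)=0$ and the first factor to be $1$, this can only happen when $\theta=\mathbf{1}$. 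With $\theta=\mathbf{1}$ the two factors become $1-\ol{\alpha_{i,\pm}}p^{-j'}$ and $1-\alpha_{i,\pm}^{-1}p^{j'-1}$. Now I substitute $\alpha_{i,+}=p^{(r-i)(k-1)}$, $\ol{\alpha_{i,+}}=-p^{(r-i)(k-1)}$ (resp. the odd-$m$ values $\pm\alpha p^{(r-i)(k-1)}$ with the fixed choice $\alpha=p^{(k-1)/2}$ when $\alpha=\pm p^{(k-1)/2}$), and unwind $j'=j+(r-i)(k-1)$: the first factor becomes $1\mp p^{j}$ (even $m$) and vanishes only for $j=0$ with the minus sign, i.e. for $L_{f_i,-}$; the second becomes $1-p^{j-1}$ (even $m$), vanishing only for $j=1$, i.e. for $L_{f_i,+}$. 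For odd $m$ one gets factors of the shape $1\mp\alpha p^{-j}p^{-?}$ and $1-\alpha^{-1}p^{j-1}p^{?}$; plugging in $\alpha^2=p^{k-1}$ (which requires $\epsilon(p)p^{k-1}=-\alpha^2$, forcing the sign condition and in particular $\alpha=p^{(k-1)/2}$ i.e. $k$ odd for $\alpha$ to be a power of $p$), these vanish exactly at $j=\tfrac{k-1}{2}$ for the minus logarithm's root and $j=\tfrac{k+1}{2}$ for the plus one, matching the claim, and one checks $(\theta,j)=(\mathbf 1,\tfrac{k\pm1}{2})\in C_m$.

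The only mild obstacle is bookkeeping: one must carefully track the normalization shift introduced by $\Tw_{(r-i)(k-1)}$ (so that the genuine variable $j$ runs in $[1,k-1]$ while the internal Amice--V\'elu variable is $j'=j+(r-i)(k-1)\in[1,k_i-1]$), confirm that the candidate points $(\mathbf 1,0),(\mathbf 1,1)$ for even $m$ and $(\mathbf 1,\tfrac{k\pm1}{2})$ for odd $m$ actually lie in $C_m$ (using the parity/sign conditions in the description of $C_m$), and verify that in the odd case $p^{(k-1)/2}$ is integral only when $k$ is odd, which is why that hypothesis appears in (iii). Everything else is a direct substitution into the formula for $e_{f_i,\alpha_{i,\pm}}$, so I expect the proof to be short and essentially computational.
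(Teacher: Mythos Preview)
Your approach is essentially the same as the paper's: both arguments read the trivial zeroes directly from the interpolation factors in Theorems \ref{thm:KL} and \ref{thm:AV}, reduce to $\theta=\mathbf{1}$, substitute the explicit values of $\alpha_{i,\pm}$ from \eqref{eqn:alphai}, and undo the twist to locate the vanishing in the variable $j$.

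One point in part (i) is glossed over. You claim the factors $1-(\pm1)p^{j-1}$ and $1-(\pm1)p^{-j}$ ``never vanish for $p$ odd in the critical range determined by $C_m$'', but when $r$ is even you have $\epsilon_K^r(p)=1$, and then $1-p^{j-1}$ \emph{does} vanish at $j=1$ and $1-p^{-j}$ at $j=0$. The paper closes this by checking the parity condition in $C_m$: for $m$ even and $r$ even, $(\mathbf{1},0)$ and $(\mathbf{1},1)$ are \emph{not} in $C_m$, so these are not critical twists and hence not trivial zeroes. You allude to the parity check later (for parts (ii)--(iii)) but should invoke it here as well, in the opposite direction. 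Aside from this and a minor typo ($1\mp p^{-j}$ rather than $1\mp p^{j}$ in your even-$m$ computation), the proposal matches the paper's proof.
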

	\begin{proof}
		In all cases, $\theta$ must be trivial, otherwise $\theta(p)=0$. For $L_{\epsilon_K^r}$, a trivial zero must occur at $j=0$ or $1$ and requires that $\epsilon_K^r(p)=1$. Since $p$ is inert in $K$, we have $\epsilon_K(p)=-1$, so $r$ needs to be even. But for even $r$, $(\mathbf{1},j)\not\in C_m$ for $j=0,1$.

		For the $f_i$, in all cases,
		\begin{equation}\label{eqn:efi}
			e_{f_i,\alpha_{i,\pm}}(\mathbf{1},j)=\left(1-\alpha_{i,\mp}p^{-j}\right)\left(1-\frac{p^{j-1}}{\alpha_{i,\pm}}\right).
		\end{equation}
		For $m$ even, \eqref{eqn:alphai} tells us that this is equal to
		\[ \left(1\pm p^{\frac{k_i-1}{2}-j}\right)\left(1\mp p^{j-\frac{k_i+1}{2}}\right)
		\]
		since $(r-i)(k-1)=\frac{k_i-1}{2}$. Hence, $L_{f_i,-}$ has a trivial zero at $j=\frac{k_i-1}{2}$ and $L_{f_i,+}$ has one at $j=\frac{k_i+1}{2}$. The twist operation simply shifts these to the left by $\frac{k_i-1}{2}$. When $m$ is odd, the proof is similar. In this case, we can write $\alpha=\zeta p^{\frac{k-1}{2}}$ where $\zeta$ is a root of unity. Then, using \eqref{eqn:alphai} shows that \eqref{eqn:efi} becomes
		\[ \left(1\pm\zeta p^{\frac{k_i-1}{2}-j}\right)\left(1\mp\zeta p^{j-\frac{k_i+1}{2}}\right)
		\]
		since $(r-i)(k-1)+\frac{k-1}{2}=\frac{k_i-1}{2}$. If $k$ is even, then $k_i$ is odd, so $\frac{k_i\pm1}{2}\not\in\ZZ$, so no trivial zeroes can occur. Assume that $k$ is odd. Then, $\zeta$ must be $\pm1$, in which case we have fixed the choice $\zeta=1$. In this case, $L_{f_i,-}$ has a trivial zero at $j=\frac{k_i-1}{2}$ and $L_{f_i,+}$ has one at $j=\frac{k_i+1}{2}$. The twist operation shifts these to the left by $\frac{k_i-1}{2}-\frac{k-1}{2}$.
	\end{proof}

	For the character $\omega^a$ of $\Delta$ and an element $L\in\Frac(\HH_\infty(G_\infty))$, we write
	\[ L_a(s):=\langle \pi_{\omega^a}L\rangle^s
	\]
	for the $a$th branch of $L$. This is a meromorphic function of $s\in\Zp$ and for $j\in\ZZ$, $L_a(j)=\omega^{a-j}\chi^j(L)$.

	The following is a special case of the main theorem of \cite{benois??}.
	
	\begin{theorem}[Theorem 4.3.2 of \cite{benois??}]\label{thm:benois}
		Let $f^\prime$ be a newform of prime-to-$p$ level, Nebentypus $\epsilon^\prime$, and odd weight $k^\prime$ with $a_p(f^\prime)=0$. Suppose $\alpha^\prime=p^{\frac{k^\prime-1}{2}}$ is a root of $x^2+\epsilon^\prime(p)p^{k^\prime-1}$. Then, for $a=\frac{k^\prime+1}{2}$ or $\frac{k^\prime-1}{2}$, there exists $\mc{L}_{V_{f^\prime},\alpha^\prime}(\mathbf{1},a)\in\CC_p$ such that, 
		\[ \lim_{s\rightarrow a}\frac{L_{f^\prime,\alpha^\prime,a}(s)}{s-a}=\mc{L}_{V_{f^\prime},\alpha^\prime}(\mathbf{1},a)\left(1+\frac{1}{p}\right)\frac{L(f^\prime,a)}{\Omega_{f^\prime}(\mathbf{1},a)}.
		\]
		Furthermore, $\mc{L}_{V_{f^\prime},\alpha^\prime}(\mathbf{1},a)$ is given by Benois' arithmetic of $L$-invariant $(V_f^\prime,\alpha^\prime)$ at $(\mathbf{1},a)$ as defined in \cite{benois11}.
	\end{theorem}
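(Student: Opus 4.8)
This statement is Benois's theorem, so the natural route is to specialize his general framework for trivial (``extra'') zeroes of $p$-adic $L$-functions, which is built on the cohomology of $(\varphi,\Gamma)$-modules. I would split the argument into two parts: (i) exhibit the trivial zero and pin down the \emph{shape} of the leading term, and (ii) identify the resulting analytic $\mc{L}$-invariant with the arithmetic one defined in \cite{benois11}.

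For part (i) I would start from the interpolation formula of Theorem~\ref{thm:AV}. Taking $\theta=\mathbf 1$ (so the conductor exponent is $n=0$) and using $\bar\alpha'=-\alpha'=-p^{(k'-1)/2}$, which is forced by $a_p(f')=0$, the modified Euler factor at an integer $j$ is
\[ e_{f',\alpha'}(\mathbf 1,j)=\bigl(1+p^{(k'-1)/2-j}\bigr)\bigl(1-p^{\,j-(k'+1)/2}\bigr), \]
whose second factor vanishes precisely at $j=(k'+1)/2$ while the first then contributes the surviving factor $1+\frac{1}{p}$; the other near-central point $a=(k'-1)/2$ is handled symmetrically, the two points being interchanged by the functional equation of $L_{f',\alpha'}$. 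Thus $L_{f',\alpha',a}(a)=0$ and the factor $1+\frac{1}{p}$ appearing in the statement is exactly the non-vanishing half of the degenerate Euler factor. The subtlety is that the leading coefficient cannot be read off by formally differentiating this factor; instead I would realize $L_{f',\alpha'}$, up to an explicit constant and the period $\Omega_{f'}$, as the image of Kato's zeta element $\mathbf{z}_{f'}\in H^1_{\mathrm{Iw}}(\Qp,V_{f'})$ under the Perrin-Riou big-logarithm map attached to the eigenvalue $\alpha'$, via the explicit reciprocity law. The trivial zero then reflects the degeneration, at the twist $s=a$, of the Bloch--Kato exponential on the relevant graded piece of $\Dcris(V_{f'}(a))$; this is the ``exceptional'' case because the Frobenius eigenvalue $p^{(k'-1)/2}$ equals the expected power of $p$.

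For part (ii) I would invoke Benois's definition of $\mc{L}(V_{f'},\alpha')$ as a determinant attached to the cohomology of the exceptional subquotient of the $(\varphi,\Gamma)$-module of $V_{f'}(a)$, relative to the regular submodule $D\subset\Dcris(V_{f'})$ spanned by the $\alpha'$-eigenvector of $\varphi$. In the crystalline near-central case this exceptional subquotient is of a very explicit type (Hodge--Tate weights $0$ and $k'-1$, Frobenius eigenvalue $p^{(k'-1)/2}$), so the determinant can be computed concretely. The leading-term calculation then proceeds in the Greenberg--Stevens / Perrin-Riou style: factor the big-logarithm map through the triangulation of the $(\varphi,\Gamma)$-module, track how the surviving $1+\frac{1}{p}$ and the relevant normalization factors are produced, and match the remaining ratio with the determinant defining $\mc{L}(V_{f'},\alpha')$, using that global duality constrains the localization of $\mathbf{z}_{f'}$ along the exceptional line in exactly the way measured by $\mc{L}$.

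The \textbf{main obstacle} is part (ii): proving rigorously that the analytically defined leading coefficient equals Benois's arithmetically defined invariant. This requires the full apparatus of the theory of $(\varphi,\Gamma)$-modules --- the explicit reciprocity law (Colmez, Nakamura), Perrin-Riou's regulator map and its interpolation property, and a delicate reconciliation of period and sign normalizations between Kato's element and the $p$-adic $L$-function --- and it is precisely this reconciliation that is the content of \cite[Theorem~4.3.2]{benois??}; it cannot be reproduced in a short sketch. A secondary, more elementary point is checking that the near-central complex value $L(f',a)$ does not vanish for an uncontrolled reason, so that the displayed first-order formula is the generic one rather than the leading term of a higher-order zero; this is governed by the sign in the functional equation of $f'$.
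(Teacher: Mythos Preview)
Your sketch is a reasonable outline of how Benois's theorem itself is proved, and the ingredients you list (Kato's zeta element, the Perrin--Riou big logarithm, the triangulation of the $(\varphi,\Gamma)$-module, the explicit reciprocity law) are indeed the ones that go into \cite[Theorem~4.3.2]{benois??}. You also correctly isolate the trivial zero via the Euler factor, pick out the surviving $1+\tfrac{1}{p}$, and note that the other near-central point comes from the functional equation.

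However, the paper takes a much lighter route: this theorem is a \emph{citation}, not a result to be reproved. The paper's ``proof'' consists only of (a) checking that Benois's hypotheses are satisfied in our situation --- from $a_p(f')=0$ one gets $\ol{\alpha}'=-\alpha'$, hence $\epsilon'(p)=-1$ and $\varphi$ acts semisimply on $\Dcris(V_{f'})$; (b) a dictionary between notations, in particular that our $\mc{L}_{V_{f'},\alpha'}(\mathbf{1},\tfrac{k'+1}{2})$ is Benois's $-\mathscr{L}_{\alpha'}(f')$ and our $L_{f',\alpha',a}(s)$ is his $L_{p,\alpha'}(f,\omega^a,s)$; and (c) the observation that the case $a=\tfrac{k'-1}{2}$ follows from $a=\tfrac{k'+1}{2}$ by the compatibility of Benois's $\mc{L}$-invariant with the $p$-adic functional equation \cite[\S0.3, Remark~3]{benois??}. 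Your proposal essentially reconstructs the machinery behind the cited result rather than invoking it; that is not wrong, but it is far more than is required, and you yourself note that part~(ii) ``cannot be reproduced in a short sketch.'' For the purposes of this paper, the three-line reduction above is all that is expected. (Your secondary worry about non-vanishing of $L(f',a)$ is handled separately in the paper via Jacquet--Shalika, not here.)
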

	\begin{proof}
		Here are some remarks that explain how the above statement follows from \cite[Theorem 4.3.2]{benois??}. First, what we denote $\mc{L}_{V_{f^\prime},\alpha^\prime}(\mathbf{1},\frac{k^\prime+1}{2})$ is denoted $-\mathscr{L}_{\alpha^\prime}(f^\prime)$ by Benois. Since we assume that $a_p(f^\prime)=0$, we know that the other root of $x^2+\epsilon^\prime(p)p^{k-1}$ is $\ol{\alpha}^\prime=-\alpha^\prime$, hence $\epsilon^\prime(p)=-1$ and $\vp$ acts semisimply on $\Dcris(V_f)$.  What we write $L_{f^\prime,\alpha^\prime,a}(s)$ is denoted $L_{p,\alpha^\prime}(f,\omega^a,s)$ in \cite{benois??}. Finally, to obtain the statement for $a=\frac{k^\prime-1}{2}$ from \cite[Theorem 4.3.2]{benois??}, one uses the compatibility of Benois' $L$-invariant with the $p$-adic functional equation as discussed in \cite[\S0.3, Remark 3]{benois??}.
	\end{proof}
	
	Note that for $a\in\ZZ/(p-1)\ZZ$, the $p$-adic $L$-function $L_{V_m,\mf{s},a}(s)$ is an analytic function of $s\in\Zp$ unless $4|m$, in which case it follows from Theorem \ref{thm:p-adicL_Vm} that simple poles arise for $L_{V_m,\mf{s},1}(s)$ at $s=1$ and for $L_{V_m,\mf{s},0}(s)$ at $s=0$. Translating $C_m$ into this setting, we obtain the following lemma.
	
	\begin{lemma}
		An integer $j$ is critical for $L_{V_m,\mf{s},a}$ if and only if
		\[ \begin{cases}
				-(k-1)+1\leq j\leq 0	&\text{if }m\text{ is even and }a\text{ and }r\text{ have opposite parities,} \\
				1\leq j\leq k-1		&\text{otherwise.}
			\end{cases}
		\]
	\end{lemma}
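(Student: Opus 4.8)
The plan is to translate the description of the critical set $C_m$ from Lemma \ref{lem:HodgePoly}'s surrounding discussion (the lemma describing $C_m$) into the language of branches. Recall that for a character $\omega^a$ of $\Delta$ and $L\in\Frac(\HH_\infty(G_\infty))$, the $a$th branch satisfies $L_a(j)=\omega^{a-j}\chi^j(L)$, so that $j$ being critical for $L_{V_m,\mf{s},a}$ should mean that the character $\theta\chi^j$ with $\theta=\omega^{a-j}$ lies in $C_m$ (and additionally that $\theta$ has $p$-power conductor, which is automatic since $\theta$ is a power of $\omega$). So the task reduces to: for which $j\in\ZZ$ is $(\omega^{a-j},j)\in C_m$?

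First I would dispatch the odd case. By the lemma describing $C_m$, when $m$ is odd, $C_m=\{(\theta,j):1\le j\le k-1\}$ with no parity constraint on $\theta$, so $(\omega^{a-j},j)\in C_m$ if and only if $1\le j\le k-1$, independent of $a$. This gives the ``otherwise'' clause in the odd case.

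Next I would handle the even case, where $C_m=\{(\theta,j): -(k-1)+1\le j\le k-1 \text{ and } \theta\chi^j(-1)=\sgn(j-\tfrac12)(-1)^r\}$. Here I need to evaluate $\omega^{a-j}\chi^j(-1)$. Since $\omega$ and $\chi$ agree on complex conjugation (both send $\Frob_\infty$ to $-1$), we have $\omega^{a-j}\chi^j(-1)=(-1)^{a-j}(-1)^j=(-1)^a$. Thus the parity condition becomes $(-1)^a=\sgn(j-\tfrac12)(-1)^r$, i.e. $\sgn(j-\tfrac12)=(-1)^{a-r}$. When $a$ and $r$ have the same parity, this forces $\sgn(j-\tfrac12)=+1$, i.e. $j\ge1$, combined with $j\le k-1$, giving $1\le j\le k-1$. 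When $a$ and $r$ have opposite parities, it forces $\sgn(j-\tfrac12)=-1$, i.e. $j\le0$, combined with $j\ge -(k-1)+1$, giving $-(k-1)+1\le j\le 0$. Assembling these two subcases against the odd case yields exactly the stated dichotomy.

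I do not anticipate a genuine obstacle here; the only thing to be careful about is the sign bookkeeping, in particular the identity $\omega^{a-j}\chi^j(-1)=(-1)^a$, which rests on $\omega(-1)=\chi(-1)=-1$, and making sure the convention $\sgn(j-\tfrac12)$ correctly encodes ``$j\ge1$ versus $j\le0$'' as used in the $C_m$ lemma. I would write the proof as a short computation, noting that $\theta=\omega^{a-j}$ automatically has $p$-power conductor so the side remark after the $C_m$ lemma is not even needed, and citing the $C_m$ lemma directly.
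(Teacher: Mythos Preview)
Your proof is correct and follows exactly the same approach as the paper, which simply observes that $L_{V_m,\mf{s},a}(j)=\omega^{a-j}\chi^j(L_{V_m,\mf{s}})$ and leaves the translation through $C_m$ to the reader. You have merely written out the parity computation $\omega^{a-j}\chi^j(-1)=(-1)^a$ and the resulting case split that the paper omits.
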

	\begin{proof}
		Evaluating $L_{V_m,\mf{s},a}(s)$ at $s=j$ is the same as evaluating $\omega^{a-j}\chi^j(L_{V_m,\mf{s}})$.
	\end{proof}

	If $m$ is even, then for $a=0$ or $1$, let $\sigma_a=\sgn(a-\frac{1}{2})$ and 
	\begin{equation}
		\mc{L}_{V_m,\mf{s}}(\mathbf{1},a):=\prod_{\substack{i=0 \\ \mf{s}_i=\sigma_a}}^{\wt{r}-1}\mc{L}_{V_{f_i,\alpha_{i,\mf{s}_i}}}\left(\mathbf{1},a+\frac{k_i-1}{2}\right).
	\end{equation}
	If $m$ is odd, with $k$ odd and $\alpha=p^{\frac{k-1}{2}}$, then for $a=0$ or $1$, let 
	\begin{equation}
		\mc{L}_{V_m,\mf{s}}\left(\mathbf{1},a+\frac{k-1}{2}\right):=\prod_{\substack{i=0 \\ \mf{s}_i=\sigma_a}}^{\wt{r}-1}\mc{L}_{V_{f_i,\alpha_{i,\mf{s}_i}}}\left(\mathbf{1},a+\frac{k_i-1}{2}\right).
	\end{equation}

	\begin{theorem}
		Suppose $j$ is critical for $L_{V_m,\mf{s},a}(s)$.
		\begin{enumerate}
			\item If $m$ is even, then
		\[
			\ord_{s=j}L_{V_m,\mf{s},a}(s)\text{ is }\begin{cases}
												\geq\mf{s}^+	&\text{if }a=j=1\text{ (and }r\text{ is odd)}, \\
												\geq\mf{s}^-	&\text{if }a=j=0\text{ (and }r\text{ is odd)}, \\
												=0			&\text{otherwise.}
											\end{cases}
		\]
		Furthermore, for $r$ odd,
		\[
			\lim_{s\rightarrow1}\frac{L_{V_m,\mf{s},1}(s)}{(s-1)^{\mf{s}^+}}=2^{\mf{s}^-+1}\!\cdot\left(1-\frac{1}{p}\right)^{\mf{s}^-}\!\!\!\cdot\left(1+\frac{1}{p}\right)^{\mf{s}^+}\!\!\!\cdot\mc{L}_{V_m,\mf{s}}(\mathbf{1},1)\cdot\frac{L(V_m,1)}{\Omega_m(\mathbf{1},1)}
		\]
		and
		\[
			\lim_{s\rightarrow0}\frac{L_{V_m,\mf{s},0}(s)}{s^{\mf{s}^-}}=2^{\mf{s}^++1}\!\cdot\left(1-\frac{1}{p}\right)^{\mf{s}^+}\!\!\!\cdot\left(1+\frac{1}{p}\right)^{\mf{s}^-}\!\!\!\cdot\mc{L}_{V_m,\mf{s}}(\mathbf{1},0)\cdot\frac{L(V_m,0)}{\Omega_m(\mathbf{1},0)}.
		\]
			\item If $m$ is odd, then
		\[
			\ord_{s=j}L_{V_m,\mf{s},a}(s)\text{ is }\begin{cases}
												\geq\mf{s}^+	&\displaystyle\text{if }k\text{ is odd, }\alpha=p^{\frac{k-1}{2}},\text{ and }a=j=\frac{k+1}{2}, \\
												\geq\mf{s}^-	&\displaystyle\text{if }k\text{ is odd, }\alpha=p^{\frac{k-1}{2}},\text{ and }a=j=\frac{k-1}{2}, \\
												=\,\,?			&\displaystyle\text{if }k\text{ is even, }j=\frac{k}{2},\text{ and }L\left(V_m,\omega^{\frac{k}{2}-a},\frac{k}{2}\right)=0 \\
												=0			&\text{otherwise.}
											\end{cases}
		\]
		Furthermore, in the first two cases,
		\[
			\lim_{s\rightarrow\frac{k+1}{2}}\frac{L_{V_m,\mf{s},\frac{k+1}{2}}(s)}{\left(s-\frac{k+1}{2}\right)^{\mf{s}^+}}=2^{\mf{s}^-}\!\cdot\left(1-\frac{1}{p}\right)^{\mf{s}^-}\!\!\!\cdot\left(1+\frac{1}{p}\right)^{\mf{s}^+}\!\!\!\cdot\mc{L}_{V_m,\mf{s}}\left(\mathbf{1},\frac{k+1}{2}\right)\cdot\frac{L\left(V_m,\frac{k+1}{2}\right)}{\Omega_m\left(\mathbf{1},\frac{k+1}{2}\right)}
		\]
		and
		\[
			\lim_{s\rightarrow\frac{k-1}{2}}\frac{L_{V_m,\mf{s},\frac{k-1}{2}}(s)}{\left(s-\frac{k-1}{2}\right)^{\mf{s}^-}}=2^{\mf{s}^+}\!\cdot\left(1-\frac{1}{p}\right)^{\mf{s}^+}\!\!\!\cdot\left(1+\frac{1}{p}\right)^{\mf{s}^-}\!\!\!\cdot\mc{L}_{V_m,\mf{s}}\left(\mathbf{1},\frac{k-1}{2}\right)\cdot\frac{L\left(V_m,\frac{k-1}{2}\right)}{\Omega_m\left(\mathbf{1},\frac{k-1}{2}\right)}.
		\]
		In the third case, we can say that the $p$-adic interpolation factor does not vanish.
		\end{enumerate}
	In all cases, $\mc{L}_{V_m,\mf{s}}(\mathbf{1},a)$ is given by Benois' arithmetic $L$-invariant as defined in \cite{benois11}.
	\end{theorem}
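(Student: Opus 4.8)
The plan is to deduce everything from the factorization $L_{V_m,\mf{s}}=\left(\prod_{i=0}^{\wt{r}-1}\Tw_{(r-i)(k-1)}(L_{f_i,\mf{s}_i})\right)\cdot\left(L_{\epsilon_K^r}\text{ or }1\right)$ of Definition~\ref{def:admissible_p-adic_L}, using that orders of vanishing of products add, leading terms multiply, and $L_{V_m,\mf{s},a}(j)=\omega^{a-j}\chi^j(L_{V_m,\mf{s}})$. Write $j_0$ for one of the flagged near-central integers: $j_0\in\{0,1\}$ when $m$ is even, and $j_0\in\{\tfrac{k-1}{2},\tfrac{k+1}{2}\}$ when $m$ is odd (the latter also requiring $k$ odd and $\alpha=p^{(k-1)/2}$). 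By Lemma~\ref{lem:TrivZeroesOfPieces}(i) the Dirichlet factor $L_{\epsilon_K^r}$ has no trivial zero, and by parts (ii)--(iii) --- using $(r-i)(k-1)=\tfrac{k_i-1}{2}$ for $m$ even and $(r-i)(k-1)+\tfrac{k-1}{2}=\tfrac{k_i-1}{2}$ for $m$ odd --- the only factors that can vanish at $(\mathbf 1,j_0)$ are those $\Tw_{(r-i)(k-1)}(L_{f_i,\mf{s}_i})$ with $\mf{s}_i=+$ (when $j_0$ is the upper near-central point) or $\mf{s}_i=-$ (when $j_0$ is the lower), each to order exactly one. This gives the lower bounds $\mf{s}^+$ resp.\ $\mf{s}^-$, the slack in ``$\geq$'' coming only from a possible vanishing of Benois's $L$-invariant (the complex $L$-values there lie on the edge of the critical strip and are nonzero by the classical non-vanishing of Hecke $L$-functions at the edge). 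At every other critical $(\theta,j)$ I would check directly from Theorem~\ref{thm:p-adicL_Vm} that the interpolation factor $e_{m,\mf{s}}(\theta,j)$ is a nonzero $p$-adic number and that $L(V_m,\theta^{-1},j)\neq0$ (a product of critical values of Hecke $L$-functions of CM forms and of Dirichlet $L$-functions, nonzero by absolute convergence, the functional equation, and edge non-vanishing), so the order is $0$. For $m$ odd and $k$ even the point $j=\tfrac k2$ is genuinely central, so no trivial zero occurs; the same direct check shows $e_{m,\mf{s}}(\mathbf 1,\tfrac k2)\neq0$, which is all that is asserted in that case.

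For the leading terms I would evaluate the limit factor by factor. For each vanishing factor $\Tw_{(r-i)(k-1)}(L_{f_i,\mf{s}_i})$, Benois's Theorem~\ref{thm:benois}, applied to $L_{f_i,\alpha_{i,\mf{s}_i}}$ at $\left(\mathbf 1,j_0+(r-i)(k-1)\right)=\left(\mathbf 1,\tfrac{k_i\pm1}{2}\right)$, supplies the factor $\mc{L}_{V_{f_i},\alpha_{i,\mf{s}_i}}\!\left(\mathbf 1,\tfrac{k_i\pm1}{2}\right)\left(1+\tfrac1p\right)L(f_i,\cdot)/\Omega_{f_i}(\mathbf 1,\cdot)$; for the remaining $f_i$-factors and for $L_{\epsilon_K^r}$ one plugs into the interpolation formulas of Theorems~\ref{thm:AV} and~\ref{thm:KL}. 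Using $\alpha_{i,\mp}=-\alpha_{i,\pm}$ and the explicit shape of $\alpha_{i,\pm}$ from \eqref{eqn:alphai}, a direct computation shows each non-vanishing $f_i$-factor contributes $2\left(1-\tfrac1p\right)$ and, when $m$ is even (where these points force $r$ odd and $\epsilon_K(p)=-1$), that $e_{\epsilon_K^r}(\mathbf 1,j_0)=1-(-1)^r=2$; recombining $L(V_m,j_0)=\prod_i L(f_i,\cdot)\cdot L(\epsilon_K^r,j_0)$ and likewise for $\Omega_m$ then yields exactly the displayed limits, the extra ``$+1$'' in the power of $2$ for $m$ even versus its absence for $m$ odd being precisely this $e_{\epsilon_K^r}$-factor. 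This step is mechanical, the only care needed being the bookkeeping of the twists $\Tw_{(r-i)(k-1)}$ and of the sign conventions, so I do not anticipate a genuine obstacle here.

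The final assertion requires identifying $\mc{L}_{V_m,\mf{s}}(\mathbf 1,j_0)$, which by its definition equals $\prod_{i}\mc{L}_{V_{f_i},\alpha_{i,\mf{s}_i}}\!\left(\mathbf 1,\tfrac{k_i\pm1}{2}\right)$ over the $i$ with $\mf{s}_i$ the matching sign, and hence, by Theorem~\ref{thm:benois}, a product of Benois's arithmetic $L$-invariants of the pairs $(V_{f_i},\alpha_{i,\mf{s}_i})$, with Benois's arithmetic $L$-invariant of $V_m$ itself. The datum $\mf{s}$ singles out a $\vp$-stable regular submodule of $\Dcris(V_m)$ --- the $\alpha_{i,\mf{s}_i}$-eigenline inside each $\Dcris(V_{f_i})$, together with the contributions of $\Dcris(\epsilon_K^r)$ and of the Tate twists --- and I would invoke the multiplicativity of Benois's invariant under direct sums: the arithmetic $L$-invariant attached to this datum is the product of those of the summands, the summands with no trivial zero at $(\mathbf 1,j_0)$ (the $f_i$ with the non-matching sign, the twisted $f_i$ whose weight pushes the point off near-central, and $\epsilon_K^r$) contributing trivially. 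This is the non-ordinary analogue of the factorization of $L$-invariants used for symmetric powers in the ordinary case in \cite{harron12}, and establishing it --- that is, reaching into the definition of \cite{benois11} rather than using Theorem~\ref{thm:benois} as a black box, and correctly matching the regular submodule to $\mf{s}$ --- is the step I expect to be the main obstacle.
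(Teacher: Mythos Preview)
Your proposal is correct and follows essentially the same route as the paper: factor $L_{V_m,\mf{s},a}$ via Definition~\ref{def:admissible_p-adic_L}, use Lemma~\ref{lem:TrivZeroesOfPieces} to locate the trivial zeroes, appeal to non-vanishing of the archimedean $L$-values at non-central critical points (the paper cites Jacquet--Shalika for the near-central edge points of the $f_i$), apply Theorem~\ref{thm:benois} to each vanishing factor, and multiply out the remaining interpolation factors (your computations $e_{\epsilon_K}(\mathbf 1,j_0)=2$ and the $2(1-\tfrac1p)$ for non-vanishing $f_i$-factors match the paper exactly). The only point where you diverge is in treating the identification with Benois's arithmetic $L$-invariant as the main obstacle: the paper dispatches this in one line by invoking that Benois's invariant is multiplicative on direct sums of $G_\QQ$-representations, without unpacking the choice of regular submodule, so you are being more cautious than the paper itself.
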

	\begin{remark}
		The inequalities in the theorem would become equalities if the corresponding $L$-invariants of the newforms $f_i$ were known to be non-zero. When $m$ is odd, $k$ is even, and $j=\frac{k}{2}$, we are at a central point which complicates matters since the order of vanishing of the archimedean $L$-function is a subtle point. Since the $p$-adic interpolation factor does not vanish, it would be natural to conjecture that
		\[ \ord_{s=\frac{k}{2}}L_{V_m,\mf{s},a}\left(s\right)=\ord_{s=\frac{k}{2}}L\left(V_m,\omega^{\frac{k}{2}-a},s\right).
		\]
	\end{remark}
	\begin{proof}
		Let $L_{i,\mf{s}_i,a}(s)=\Tw_{(r-i)(k-1)}L_{f_i,\mf{s}_i,a}(s)$. Suppose $m$ is even. The central point of the $L$-function of $V_m$ is at $s=1/2$, so none of the values we are considering are central. Therefore, none of the archimedean $L$-values we deal with vanish and all zeroes must come from the $p$-adic interpolation factor. Indeed, away from the near-central points, the non-vanishing of the archimedean $L$-functions at critical integers follows from their definition as an Euler product to the right of the critical strip $0<s<1$ and by the functional equation to the left of the critical strip (since we are only considering critical integers). At the near-central points, the non-vanishing is a classical theorem for $L(\epsilon_K,s)$ and a result of Jacquet--Shalika for newforms (\cite{jacquetshalika76}). We have that
		\[ L_{V_m,\mf{s},a}(s)=L_{\epsilon_K^r,a}(s)\cdot\prod_{i=0}^{\wt{r}-1}L_{i,\mf{s}_i,a}(s).
		\]
		We know from Lemma \ref{lem:TrivZeroesOfPieces} that $L_{\epsilon_K^r,a}(s)$ has no trivial zeroes at critical $j$ and similarly for $L_{i,\mf{s}_i,a}(s)$ when $(\mf{s}_i,a,j)\neq(+,1,1)$ or $(-,0,0)$. This shows that the order of vanishing of $L_{V_m,\mf{s},a}(j)$ is zero away from these two exceptional cases. Note that the exceptional cases do not correspond to critical $j$ when $r$ is even, so we may assume $r$ is odd from now on. For $a=0,1$, we get from Benois' result (Theorem \ref{thm:benois}) that near $s=a$,
		\[ L_{i,\sigma_a,a}(s)=(s-a)\mc{L}_{V_{f_i},\alpha_{i,\sigma_a}}\!\left(\mathbf{1},a+\frac{k_i-1}{2}\right)\left(1+\frac{1}{p}\right)\frac{L(f_i,a)}{\Omega_{f_i}(\mathbf{1},a)}+\text{higher order terms.}
		\]
		Since
		\[ L_{\epsilon_K,a}(a)=2\frac{L(\epsilon_K,a)}{\Omega_{\epsilon_K}(\mathbf{1},a)}
		\]
		and
		\[ L_{i,-\sigma_a,a}(a)=2\left(1-\frac{1}{p}\right)\frac{L\left(f_i,a+\frac{k_i-1}{2}\right)}{\Omega_{f_i}(\mathbf{1},a)},
		\]
		the statements in part (i) hold.

		Now, suppose $m$ is odd. If $k$ is even, we know from Lemma \ref{lem:TrivZeroesOfPieces} that no trivial zeroes occur. But $j=\frac{k}{2}\in\ZZ$ is the central point and the order of vanishing of the archimedean $L$-function at $j$ can be positive. If it is, we have no knowledge of the order of vanishing of the $p$-adic $L$-function, but if it isn't, we know the latter is non-vanishing at the central point. Outside of this case, we are at non-central points so the archimedean $L$-values are non-zero as explained above. We may therefore assume $k$ is odd. Using Lemma \ref{lem:TrivZeroesOfPieces} again, we know that the order of vanishing of $L_{V_m,\mf{s},a}$ is zero away from the two exceptional cases $(\alpha,\mf{s}_i,a,j)=\left(p^{\frac{k-1}{2}},+,\frac{k+1}{2},\frac{k+1}{2}\right)$ and $\left(p^{\frac{k-1}{2}},-,\frac{k-1}{2},\frac{k-1}{2}\right)$. The rest of the proof proceeds as in the case of even $m$.

		We know from \cite{benois11} that the $\mc{L}_{V_{f_i},\alpha_{i,\pm}}(\mathbf{1},a)$ are equal to Benois' arithmetic $L$-invariants. That the $\mc{L}_{V_m,\mf{s}}(\mathbf{1},a)$ are as well follows from the fact that Benois' $L$-invariant is multiplicative on direct sums of representations of $G_\QQ$.
	\end{proof}

	An interesting phenomenon occurs for $4|m$. In this case, each of the twisted $p$-adic $L$-functions of the $f_i$ has a trivial zero at $(\mathbf{1},1)$ or $(\mathbf{1},0)$, but $V_m$ itself is not critical at these characters because of the presence of the $p$-adic $L$-function of $\epsilon_K^r=\mathbf{1}$. However, this latter $p$-adic $L$-function is given by $\zeta_p(s)$ (resp. $\zeta_p(1-s)$) at the corresponding branches. Here $\zeta_p(s)$ is the $p$-adic Riemann zeta function of Kubota--Leopoldt and we know its residue at $s=1$.

	\begin{theorem}
		If $4|m$ and $j=0,1$, then
		\[
			\ord_{s=j}L_{V_m,\mf{s},a}(s)\text{ is }\begin{cases}
												\geq\mf{s}^+-1	&\text{if }a=j=1, \\
												\geq\mf{s}^--1	&\text{if }a=j=0, \\
												=0			&\text{otherwise.}
											\end{cases}
		\]
		Furthermore,
		\[
			\lim_{s\rightarrow1}\frac{L_{V_m,\mf{s},1}(s)}{(s-1)^{\mf{s}^+-1}}=2^{\mf{s}^-}\cdot\left(1-\frac{1}{p}\right)^{\mf{s}^-+1}\cdot\mc{L}_{V_m,\mf{s}}(\mathbf{1},1)\cdot\frac{L(V_m,1)}{\Omega_m(\mathbf{1},1)}
		\]
		and
		\[
			\lim_{s\rightarrow0}\frac{L_{V_m,\mf{s},0}(s)}{s^{\mf{s}^--1}}=-2^{\mf{s}^+}\cdot\left(1-\frac{1}{p}\right)^{\mf{s}^++1}\cdot\mc{L}_{V_m,\mf{s}}(\mathbf{1},0)\cdot\frac{L(V_m,0)}{\Omega_m(\mathbf{1},0)}.
		\]
		The inequalities would become equalities if the corresponding $L$-invariants of the $f_i$ were known to be non-zero.
	\end{theorem}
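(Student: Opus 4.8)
The plan is to run the argument of the previous theorem, the only new feature being that $\epsilon_K^r$ is now the trivial character, so that the Kubota--Leopoldt $p$-adic zeta function (with its pole) replaces $L_{\epsilon_K}$. First I would note that $4\mid m$ forces $m$ even and $r=m/2$ even, hence $\epsilon_K^r=\triv$, and by Definition~\ref{def:admissible_p-adic_L}
\[
	L_{V_m,\mf{s}}=\left(\prod_{i=0}^{\wt{r}-1}\Tw_{(r-i)(k-1)}\left(L_{f_i,\mf{s}_i}\right)\right)\cdot L_\triv .
\]
Passing to the $\omega^a$-branch, $L_{V_m,\mf{s},a}(s)$ is the product of $L_{\triv,a}(s)$ with the functions $L_{i,\mf{s}_i,a}(s):=\Tw_{(r-i)(k-1)}L_{f_i,\mf{s}_i,a}(s)$, so the order of vanishing at $s=j$ is the sum of the orders of the factors and the leading Laurent coefficient is the product of their leading coefficients. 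It therefore suffices to treat the $f_i$-factors and the $L_{\triv,a}$-factor separately.

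For the $f_i$-factors, Lemma~\ref{lem:TrivZeroesOfPieces} (case $m$ even), via the twist shift, shows that $\Tw_{(r-i)(k-1)}L_{f_i,\mf{s}_i}$ has a trivial zero among the characters $\theta\chi^j$ with $j\in\{0,1\}$ precisely when $\theta=\triv$ and $(\mf{s}_i,j)=(+,1)$ or $(-,0)$; at every other such character both the $p$-Euler factor (since the conductor of $\theta$ is divisible by $p$) and the relevant twisted complex $L$-value (being at a near-central but non-central point, by Jacquet--Shalika \cite{jacquetshalika76}) are non-zero. At the trivial-zero characters the zero is simple and its leading coefficient is supplied by Benois' theorem (Theorem~\ref{thm:benois}) exactly as in the previous proof: on the branch $a=1$ the exceptional indices are those with $\mf{s}_i=\sigma_1=+$ and $L_{i,+,1}(s)=(s-1)\,\mc{L}_{V_{f_i},\alpha_{i,+}}(\triv,\tfrac{k_i+1}{2})(1+1/p)\,L(f_i,\tfrac{k_i+1}{2})/\Omega_{f_i}+\cdots$, while $L_{i,-,1}(1)=2(1-1/p)\,L(f_i,\tfrac{k_i+1}{2})/\Omega_{f_i}\neq0$, with the roles of $+$ and $-$ and of $\tfrac{k_i+1}{2}$ and $\tfrac{k_i-1}{2}$ exchanged on the branch $a=0$ at $s=0$. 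Consequently $\prod_i L_{i,\mf{s}_i,1}(s)$ has order $\ge\mf{s}^+$ at $s=1$, $\prod_i L_{i,\mf{s}_i,0}(s)$ has order $\ge\mf{s}^-$ at $s=0$, and both products are finite and non-zero at every $\theta\chi^j$ with $j\in\{0,1\}$ and $\theta\ne\triv$.

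For the $L_{\triv,a}$-factor, Theorem~\ref{thm:KL} identifies it with the $p$-adic zeta function: on the branch $a=1$ it is $\zeta_p(s)$, with a simple pole of residue $1-1/p$ at $s=1$, on the branch $a=0$ it is $\zeta_p(1-s)$, with a simple pole at $s=0$ satisfying $\lim_{s\to0}s\,L_{\triv,0}(s)=-(1-1/p)$, and at every $\theta\chi^j$ with $j\in\{0,1\}$, $\theta\ne\triv$ it takes a finite non-zero value (classical non-vanishing of $L$-values at $s=0,1$). Since $j\in\{0,1\}$ and $a$ ranges over $\ZZ/(p-1)\ZZ$, the equality $\theta=\omega^{a-j}=\triv$ holds only for $(a,j)\in\{(0,0),(1,1)\}$; combining the two paragraphs gives $\ord_{s=j}L_{V_m,\mf{s},a}(s)=0$ whenever $j\in\{0,1\}$ and $(a,j)\notin\{(0,0),(1,1)\}$, while on $a=j=1$ the order is $\ge\mf{s}^+-1$ and on $a=j=0$ it is $\ge\mf{s}^--1$, the drop by one being the order of the pole of $\zeta_p$. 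Multiplying the leading coefficients (for instance $\lim_{s\to1}\frac{L_{V_m,\mf{s},1}(s)}{(s-1)^{\mf{s}^+-1}}=\big(\lim_{s\to1}(s-1)\zeta_p(s)\big)\cdot\prod_{\mf{s}_i=+}\big(\lim_{s\to1}\tfrac{L_{i,+,1}(s)}{s-1}\big)\cdot\prod_{\mf{s}_i=-}L_{i,-,1}(1)$), collecting the powers of $2$ and $1\pm1/p$, identifying $\prod_{\mf{s}_i=\sigma_a}\mc{L}_{V_{f_i},\alpha_{i,\mf{s}_i}}(\triv,a+\tfrac{k_i-1}{2})$ with $\mc{L}_{V_m,\mf{s}}(\triv,a)$, and absorbing $\prod_i L(f_i,\cdot)/\Omega_{f_i}$ into $L(V_m,j)/\Omega_m(\triv,j)$ (the archimedean pole of the $\zeta$-factor of $L(V_m,s)$ being the counterpart of the $p$-adic one) yields the stated limit formulas; the inequalities become equalities exactly when the $\mc{L}$-invariants of the exceptional $f_i$ do not vanish, and $\mc{L}_{V_m,\mf{s}}(\triv,a)$ is Benois' arithmetic $L$-invariant because the latter is multiplicative over direct sums. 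The main obstacle here is not conceptual but is the bookkeeping around the pole: one must check that the ``missing'' archimedean factor $\zeta(1)$, resp.\ $\zeta(0)$, in $L(V_m,j)$ is matched precisely by the residue $1-1/p$ of $\zeta_p$ on the relevant branch, and keep careful track of signs and of the powers of $1\pm1/p$ coming from Benois' formula, from the residue, and from the values $2(1-1/p)L(f_i,\cdot)/\Omega_{f_i}$ of the non-exceptional pieces; beyond that, this is the previous theorem's proof with $L_{\epsilon_K,a}(a)$ replaced by the simple pole of $\zeta_p$ at $s=0,1$.
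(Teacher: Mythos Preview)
Your proposal is correct and takes essentially the same approach as the paper: the paper's proof consists of a single sentence saying the argument is along the same lines as the previous theorem, together with the Laurent expansion $\zeta_p(s)=(1-1/p)(s-1)^{-1}+\cdots$ near $s=1$. Your write-up is simply a more detailed version of this, and your observation that on the branch $a=0$ one uses $\zeta_p(1-s)$ (giving the extra minus sign in the $j=0$ formula) makes explicit a point the paper leaves implicit.
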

	\begin{proof}
		The proof is along the same lines as the previous theorem noting that near $s=1$
		\[ \zeta_p(s)=\left(1-\frac{1}{p}\right)\cdot\frac{1}{s-1}+\text{ higher order terms}.
		\]
	\end{proof}

\appendix
\section{Plus and minus Coleman maps}
This is an erratum to \cite[Section~5]{lei09}. We review the definition of plus and minus Coleman maps in {\it op. cit.} and describe their images explicitly. Our strategy here is based on ideas in \cite{leiloefflerzerbes11}.

Let $f$ be a modular form as in Section~\ref{sec:modularforms} with $a_p=0$ but not necessarily CM. Let $H^1_{\Iw}(\Qp,V_f)=\Qp\otimes\varprojlim H^1(\QQ_{p,n},T_f)$ (note that $V_f$ in this paper is defined to be the Tate dual of the $V_f$ in \cite{lei09}). The plus and minus Coleman maps are $\Lambda_E(G_\infty)$-homomorphisms
\[
\Col^\pm:H^1_{\Iw}(\Qp,V_f)\rightarrow\Lambda_E(G_\infty),
\]
which are defined by the relation 
\begin{equation}
\label{eq:plusminuscoleman}\Tw_1(\log_{k-1}^\pm)\Col^\pm=\LL_{\eta^\pm},
\end{equation}
 where
\[
\LL_{\eta^\pm}:H^1_{\Iw}(\Qp,V_f)\rightarrow\calH_{(k-1)/2,E}(G_\infty)
\]
are the Perrin-Riou pairing associated to $\eta^\pm\in\widetilde{\DD}_{\cris}(V_f^*(-1))$. Here, $\eta^-\in\Fil^1\widetilde{\DD}_{\cris}(V_f^*(-1))$ and $\eta^+=\vp(\eta^-)$.

Let $\mathbf{z}\in H^1_{\Iw}(\Qp,V_f)$ and write $z_{-j,n}$ for its natural image in $H^1(\QQ_{p,n},V_f(-j))$. For an integer $j\in[0,k-2]$ and a Dirichlet character $\theta$ of conductor $p^n>1$, we have
\begin{eqnarray}
\chi^j(\LL_{\eta^\pm}(\mathbf{z}))&=&j!\left[\left(1-\frac{\vp^{-1}}{p}\right)(1-\vp)^{-1}(\eta^\pm_{j+1}),\exp^*(z_{-j,0})\right]_0,\label{eq:char1}\\
\theta\chi^j(\LL_{\eta^\pm}(\mathbf{z}))&=&\frac{j!}{\tau(\theta^{-1})}\sum_{\sigma\in G_n}\theta^{-1}(\sigma)\left[\vp^{-n}(\eta_{j+1}^\pm),\exp^*(z_{-j,n}^\sigma)\right]_n.\label{eq:char2}
\end{eqnarray}
Here, $\eta_{j+1}^\pm$ denotes the natural image of $\eta^\pm$ in $\widetilde{\DD}_{\cris}(V_f^*(j))$ and $[,]_n$ is the natural pairing on
\[
\QQ_{p,n}\otimes\widetilde{\DD}_{\cris}(V_f^*(j))\times\QQ_{p,n}\otimes\widetilde{\DD}_{\cris}(V_f(-j))\rightarrow\QQ_{p,n}\otimes E.
\]

\begin{proposition}\label{prop:det}
The $\Lambda_E(G_\infty)$-homomorphism
\begin{eqnarray*}
\underline{\Col}:H^1_{\Iw}(\Qp,V_f)&\rightarrow&\Lambda_E(G_\infty)^{\oplus2}\\
\mathbf{z}&\mapsto&(\Col^+(\mathbf{z}),\Col^-(\mathbf{z}))
\end{eqnarray*}
has determinant $\prod_{j=0}^{k-2}(\chi^{-j}(\gamma_0)\gamma_0-1)$.
\end{proposition}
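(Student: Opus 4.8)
The plan is to use the defining relation \eqref{eq:plusminuscoleman} to transport the computation to Perrin-Riou's regulator map attached to $V_f$, whose determinant is controlled by her explicit reciprocity law. Since $p$ is inert in $K$, the representation $V_f|_{G_{\Qp}}$ is irreducible; by the standard structure theory of Iwasawa cohomology (as in \cite{leiloefflerzerbes11}), $H^1_{\Iw}(\Qp,V_f)$ is then free of rank $2$ over $\Lambda_E(G_\infty)$ and $H^2_{\Iw}(\Qp,V_f)=0$. Thus $\underline{\Col}$ is a homomorphism of free rank-$2$ $\Lambda_E(G_\infty)$-modules, with a well-defined determinant in $\Lambda_E(G_\infty)$ once a basis of $H^1_{\Iw}(\Qp,V_f)$ is fixed. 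By \eqref{eq:plusminuscoleman}, composing $\underline{\Col}$ with the diagonal matrix $\operatorname{diag}(\Tw_1(\log_{k-1}^+),\Tw_1(\log_{k-1}^-))$ gives $\mathbf{z}\mapsto(\LL_{\eta^+}(\mathbf{z}),\LL_{\eta^-}(\mathbf{z}))$, whence
\[
\Tw_1(\log_{k-1}^+)\cdot\Tw_1(\log_{k-1}^-)\cdot\det\underline{\Col}=\det(\LL_{\eta^+},\LL_{\eta^-}).
\]

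I would then eliminate the logarithm factors. Starting from $\prod_{n=1}^{N}\Phi_n(1+T)=((1+T)^{p^N}-1)/T$ and letting $N\to\infty$, one gets $\prod_{n\ge1}\Phi_n(1+T)/p=\log(1+T)/T$ in $\mathcal{H}$; combined with the definition of $\log^\pm_{k-1}$ this gives
\[
\log_{k-1}^+\cdot\log_{k-1}^-=\prod_{a=1}^{k-1}\frac{\log(\chi(\gamma_0)^{-a}\gamma_0)}{\chi(\gamma_0)^{-a}\gamma_0-1}.
\]
Applying $\Tw_1$, which sends $\gamma_0\mapsto\chi(\gamma_0)\gamma_0$ and commutes with $\log$, shifts the product index from $a\in[1,k-1]$ to $j\in[0,k-2]$, so that
\[
\Tw_1(\log_{k-1}^+)\cdot\Tw_1(\log_{k-1}^-)\cdot\prod_{j=0}^{k-2}(\chi^{-j}(\gamma_0)\gamma_0-1)=\prod_{j=0}^{k-2}\log(\chi(\gamma_0)^{-j}\gamma_0).
\]
Comparing with the displayed formula above, the proposition becomes equivalent to the single identity $\det(\LL_{\eta^+},\LL_{\eta^-})=\prod_{j=0}^{k-2}\log(\chi(\gamma_0)^{-j}\gamma_0)$.

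This last identity is Perrin-Riou's determinant formula for the regulator of $V_f$. The pair $\{\eta^+,\eta^-\}$ is an $E$-basis of $\widetilde{\DD}_{\cris}(V_f^*(-1))$---the line $\Fil^1\widetilde{\DD}_{\cris}(V_f^*(-1))$ is not $\vp$-stable, since a $\vp$-stable line would violate weak admissibility given that $a_p(f)=0$, so $\eta^-$ is not a $\vp$-eigenvector and $\{\eta^-,\vp(\eta^-)\}$ spans---whence $(\LL_{\eta^+},\LL_{\eta^-})$ is, up to a nonzero scalar, Perrin-Riou's regulator map $\mathcal{L}_{V_f}$ written in that basis, and $\det\mathcal{L}_{V_f}$ is computed by her explicit reciprocity law, the $k-1$ factors $\log(\chi(\gamma_0)^{-j}\gamma_0)$ ($0\le j\le k-2$) corresponding to the $k-1$ interpolation points of $\mathcal{L}_{V_f}$. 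Concretely, following the Wach-module approach of \cite{leiloefflerzerbes11}, one fixes an $\calO_E$-basis of $\mathbb{N}(T_f)$ adapted to its filtration, writes the regulator as an explicit matrix built from $q=\vp(\pi)/\pi$ and its $\vp$-iterates times the matrix of $\underline{\Col}$, and computes the determinant of that matrix; alternatively one identifies $\det(\LL_{\eta^+},\LL_{\eta^-})$ from its values at the characters $\chi^j\theta$, $j\in[0,k-2]$, supplied by \eqref{eq:char1}--\eqref{eq:char2} together with a growth and divisibility argument. The main obstacle is this last point: not the shape of $\det\mathcal{L}_{V_f}$, which is Perrin-Riou's theory, but the normalization bookkeeping---tracking the factorials $j!$, the Gauss sums, the powers of $p$ introduced by the $\Tw_1$-twist, and the $\varepsilon$-factor in Perrin-Riou's formula---so that the answer is \emph{exactly} $\prod_{j=0}^{k-2}(\chi^{-j}(\gamma_0)\gamma_0-1)$, lying in $\Lambda_E(G_\infty)$ rather than merely in $\mathcal{H}(G_\infty)$, with no leftover unit, and fixing (e.g.\ via a Wach-module basis) the basis of $H^1_{\Iw}(\Qp,V_f)$ relative to which this holds on the nose.
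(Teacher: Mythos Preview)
Your approach is essentially the same as the paper's: reduce via \eqref{eq:plusminuscoleman} to computing $\det(\LL_{\eta^+},\LL_{\eta^-})$, invoke Perrin-Riou's $\delta(V)$-conjecture as proved by Colmez to get $\prod_{j=0}^{k-2}\log_p(\chi^{-j}(\gamma_0)\gamma_0)$, and factor this as $\Tw_1(\log_{k-1}^+)\cdot\Tw_1(\log_{k-1}^-)\cdot\prod_{j=0}^{k-2}(\chi^{-j}(\gamma_0)\gamma_0-1)$. Your worries about normalization bookkeeping are handled in the paper by a direct citation to \cite[Conjecture 3.4.7]{perrinriou94} and \cite[Th\'eor\`eme IX.4.5]{colmez98}, so you need not reprove that identity from scratch.
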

\begin{proof}
  The $\delta(V)$-conjecture of Perrin-Riou (\cite[Conjecture 3.4.7]{perrinriou94}), which is a consequence of \cite[Th\'eor\`eme IX.4.5]{colmez98} says that the determinant of the $\Lambda_E(G_\infty)$-homomorphism
\begin{eqnarray*}
H^1_{\Iw}(\Qp,V_f)&\rightarrow&\calH_{(k-1)/2,E}(G_\infty)^{\oplus 2}\\
\mathbf{z}&\mapsto&(\LL_{\eta^+}(\mathbf{z}),\LL_{\eta^-}(\mathbf{z}))
\end{eqnarray*}
  is given by $\prod_{j=0}^{k-2}\log_p(\chi^{-j}(\gamma_0)\gamma_0)$. But we have
\[
\prod_{j=0}^{k-2}\log_p(\chi^{-j}(\gamma_0)\gamma_0)=\log_{k-1}^+\times\log_{k-1}^-\times\prod_{j=0}^{k-2}(\chi^{-j}(\gamma_0)\gamma_0-1)
\]
by definition.  Hence the result follows from \eqref{eq:plusminuscoleman}.
\end{proof}

\begin{theorem}
The image of $\underline{\Col}$ is given by
\[
\begin{split}
S:=\{(F,G)\in\Lambda_E(G)^{\oplus2}:(\epsilon(p)^{-1}p^{1+j-k}+p^{-j-1})\chi^j(F)=(1-p^{-1})\chi^j(G),\theta\chi^j(F)=0&\\
\text{ for all integers $0\le j\le k-2$ and Dirichlet characters $\theta$ of conductor $p$}\}.&
\end{split}
\]
\end{theorem}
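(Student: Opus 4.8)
The plan is to prove the inclusions $\im(\underline{\Col})\subseteq S$ and $S\subseteq\im(\underline{\Col})$ separately. The first is an explicit reciprocity computation using \eqref{eq:char1}--\eqref{eq:char2} together with the defining relation \eqref{eq:plusminuscoleman}; the second will follow formally from the determinant computation of Proposition~\ref{prop:det} and a characteristic-ideal count for $S$.

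For $\im(\underline{\Col})\subseteq S$, fix $\mathbf{z}\in H^1_{\Iw}(\Qp,V_f)$ and set $F=\Col^+(\mathbf{z})$, $G=\Col^-(\mathbf{z})$. First I would note that the zeroes of $\log^+_{k-1}$ (resp.\ $\log^-_{k-1}$) occur only at characters whose restriction to $\Gamma$ is nontrivial, so $\theta\chi^{j+1}(\log^\pm_{k-1})\ne0$ for all integers $0\le j\le k-2$ whenever $\theta$ is trivial or of conductor $p$. Since $\theta\chi^j(\Tw_1(h))=\theta\chi^{j+1}(h)$, relation \eqref{eq:plusminuscoleman} gives $\theta\chi^j(F)=\theta\chi^j(\LL_{\eta^+}(\mathbf{z}))/\theta\chi^{j+1}(\log^+_{k-1})$, and similarly for $G$, so each relation defining $S$ can be checked after clearing these nonzero denominators. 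The vanishing $\theta\chi^j(F)=0$ for $\theta$ of conductor $p$ then comes from \eqref{eq:char2} with $n=1$: there $\vp^{-1}(\eta^+_{j+1})=\eta^-_{j+1}$ lies in $\Fil^1\widetilde{\DD}_{\cris}(V_f^*(j))$, which for $0\le j\le k-2$ is orthogonal to the image of $\exp^*$ under $[\,,\,]_1$ (equivalently, $\Fil^0\Dcris(V_f(-j))$ is one-dimensional), so $\theta\chi^j(\LL_{\eta^+}(\mathbf{z}))=0$. The linear relation comes from \eqref{eq:char1}: writing $\eta^+_{j+1}=\vp(\eta^-_{j+1})$ and using that $\vp$ acts on $\widetilde{\DD}_{\cris}(V_f^*(j))$ with eigenvalues that are negatives of one another (as $a_p(f)=0$) and equal, up to an explicit power of $p$, to the roots of $x^2+\epsilon(p)p^{k-1}$, one computes the image of $(1-\vp^{-1}/p)(1-\vp)^{-1}\eta^\pm_{j+1}$ in the one-dimensional quotient $\widetilde{\DD}_{\cris}(V_f^*(j))/\Fil^1$ against which $\exp^*(z_{-j,0})$ pairs; combined with the values $\chi^{j+1}(\log^\pm_{k-1})$ this produces exactly $(\epsilon(p)^{-1}p^{1+j-k}+p^{-j-1})\chi^j(F)=(1-p^{-1})\chi^j(G)$.

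For $S\subseteq\im(\underline{\Col})$, Proposition~\ref{prop:det} tells us that $\underline{\Col}$ is injective with cokernel of characteristic ideal $(\mathcal{P})$, where $\mathcal{P}:=\prod_{j=0}^{k-2}(\chi^{-j}(\gamma_0)\gamma_0-1)$. Passing to $\omega^a$-isotypic components replaces $\Lambda_E(G_\infty)$ by $\Lambda_E(\Gamma)\cong\OO_E[[X]][1/p]$, a one-dimensional UFD and hence a PID, and $\mathcal{P}$, lying in $\Lambda_E(\Gamma)$, has the same image in each component. A direct inspection of the two families of relations defining $S$ shows that, after the identification $\pi_{\omega^a}\Lambda_E(G_\infty)\cong\Lambda_E(\Gamma)$, the submodule $\pi_{\omega^a}S$ is the kernel of a surjection of $\Lambda_E(\Gamma)$-modules
\[
\Lambda_E(\Gamma)^{\oplus2}\longrightarrow\bigoplus_{j=0}^{k-2}\Lambda_E(\Gamma)\big/\big(\chi^{-j}(\gamma_0)\gamma_0-1\big),
\]
in which, for $j\not\equiv a\pmod{p-1}$, the conductor-$p$ condition forces the first coordinate to vanish at $\chi^j|_\Gamma$, while for $j\equiv a$ the linear relation links the two coordinates there; surjectivity holds because the characters $\chi^j|_\Gamma$ for distinct $j$ are distinct and the second coordinate may be chosen freely. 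Hence $\Char_{\Lambda_E(\Gamma)}(\Lambda_E(\Gamma)^{\oplus2}/\pi_{\omega^a}S)=(\mathcal{P})$, which agrees with $\Char_{\Lambda_E(\Gamma)}(\Lambda_E(\Gamma)^{\oplus2}/\pi_{\omega^a}\im(\underline{\Col}))$ by Proposition~\ref{prop:det}. As $\pi_{\omega^a}\im(\underline{\Col})\subseteq\pi_{\omega^a}S$ are full-rank submodules of a free module over the PID $\Lambda_E(\Gamma)$ and characteristic ideals are multiplicative along $\pi_{\omega^a}\im(\underline{\Col})\subseteq\pi_{\omega^a}S\subseteq\Lambda_E(\Gamma)^{\oplus2}$, we get $\pi_{\omega^a}\im(\underline{\Col})=\pi_{\omega^a}S$; summing over $a$ gives $\im(\underline{\Col})=S$.

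I expect the main obstacle to be the explicit reciprocity computation yielding the linear relation with precisely the coefficients $\epsilon(p)^{-1}p^{1+j-k}+p^{-j-1}$ and $1-p^{-1}$: this needs careful bookkeeping of the action of $\vp$ on the twisted Dieudonn\'e module $\widetilde{\DD}_{\cris}(V_f^*(j))$, of the choice $\eta^-\in\Fil^1$ with $\eta^+=\vp(\eta^-)$, and of the precise normalizations of the Perrin-Riou pairings $\LL_{\eta^\pm}$ and of $\log^\pm_{k-1}$ (the latter differing from Pollack's, cf.\ the footnote to the definition of $\log^\pm_b$). By contrast, the orthogonality input for the conductor-$p$ vanishing is standard, and the commutative-algebra argument on the arithmetic side is routine once the characteristic-ideal description of $S$ is in place.
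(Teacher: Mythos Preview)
Your proposal is correct and follows essentially the same route as the paper: both prove $\im(\underline{\Col})\subseteq S$ by the explicit reciprocity computation via \eqref{eq:char1}--\eqref{eq:char2}, and then deduce equality from the determinant computation of Proposition~\ref{prop:det}. Your write-up is more explicit on the commutative algebra for the reverse inclusion (isotypic decomposition and characteristic-ideal comparison over the PID $\Lambda_E(\Gamma)$), whereas the paper simply asserts that $S$ has determinant $\prod_{j=0}^{k-2}(\chi^{-j}(\gamma_0)\gamma_0-1)$; conversely, the paper is more explicit on the linear relation, invoking \cite[Lemma~5.6]{leiloefflerzerbes11} to expand $(1-\vp^{-1}/p)(1-\vp)^{-1}$ on $\widetilde{\DD}_{\cris}(V_f^*(j))$ using the quadratic relation $\vp^2+\epsilon(p)p^{k-2j-3}=0$, which is the bookkeeping you flag as the main obstacle.
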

\begin{proof}
Since the set $S$ has determinant $\prod_{j=0}^{k-2}(\chi^{-j}(\gamma_0)\gamma_0-1)$, in view of Proposition~\ref{prop:det}, we only need to show that $\underline{\Col}(\mathbf{z})\in S$ for all $\mathbf{z}\in H^1_{\Iw}(\Qp,V_f)$.

Fix an integer $j\in[0,k-2]$. If $\theta$ is a Dirichlet character of conductor $p$, \eqref{eq:char2} says that
\[
\theta\chi^j(\LL_{\eta^+}(\mathbf{z}))=\frac{j!}{\tau(\theta^{-1})}\sum_{\sigma\in G_n}\theta^{-1}(\sigma)\left[\vp^{-1}(\eta_{j+1}^+),\exp^*(z_{-j,n}^\sigma)\right]_n.
\]
But 
\[
\vp^{-1}(\eta_{j+1}^+)=p^{j+1}\eta^-_{j+1}\in\Fil^0\widetilde{\DD}_{\cris}(V_f^*(j))
\]
and
\begin{equation}\label{eq:vanishpairing}
\left[\eta_{j+1}^-,\exp^*(z_{-j,n}^\sigma)\right]_n=0.
\end{equation}
Hence $\theta\chi^j(\LL_{\eta^+}(\mathbf{z}))=0$.

On $\widetilde{\DD}_{\cris}(V_f^*(j))$, we have $\vp^2+\epsilon(p)p^{k-2j-3}=0$, so \cite[Lemma~5.6]{leiloefflerzerbes11} implies that
\[
\left(1-\frac{\vp^{-1}}{p}\right)(1-\vp)^{-1}=\frac{(1+\epsilon(p)p^{k-2j-2})\vp+\epsilon(p)p^{k-2j-3}(p-1)}{\epsilon(p)p^{k-2j-2}(1+\epsilon(p)p^{k-2j-3})}.
\] 
We write $\delta_j=\epsilon(p)p^{k-2j-2}(1+\epsilon(p)p^{k-2j-3})$, then \eqref{eq:char1} and \eqref{eq:vanishpairing} imply that
\begin{eqnarray*}
\chi^j(\LL_{\eta^+}(\mathbf{z}))&=&\frac{j!}{\delta_j}\times\epsilon(p)p^{k-2j-3}(p-1)\times\left[\eta^+_{j+1},\exp^*(z_{-j,0})\right]_0,\\
\chi^j(\LL_{\eta^-}(\mathbf{z}))&=&\frac{j!}{\delta_j}\times\frac{1+\epsilon(p)p^{k-2j-2}}{p^{j+1}}\times\left[\eta^+_{j+1},\exp^*(z_{-j,0})\right]_0.
\end{eqnarray*}
Therefore, we indeed have
\[
(\epsilon(p)^{-1}p^{1+j-k}+p^{-j-1})\chi^j(\LL_{\eta^+}(\mathbf{z}))=(1-p^{-1})\chi^j(\LL_{\eta^-}(\mathbf{z}))
\]
as required.
\end{proof}

Note that $\epsilon(p)^{-1}p^{1+j-k}+p^{-j-1}\ne0$ unless $\epsilon(p)=-1$ and $j=k/2-1$. We therefore deduce that
\[
\im(\Col^+)=\{F\in\Lambda_E(G_\infty):\theta\chi^j(F)=0\text{ for all $j\in[0,k-2]$ and Dirichlet characters $\theta$ of conductor $p$}\}
\]
and
\[
\im(\Col^-)=\begin{cases}
\{F\in\Lambda_E(G_\infty):\chi^{k/2-1}(F)=0\}&\text{if $\epsilon(p)=-1$ and $k$ is even,}\\
\Lambda_E(G_\infty)&\text{otherwise.}
\end{cases}
\]

If $a\in\ZZ/(p-1)\ZZ$, let 
\[
\fl_{f,a}^+=\prod_{\substack{0\le j\le k-2\\ j\not\equiv a\mod {p-1}}}(\chi^{-j}(\gamma_0)\gamma_0-1)
\]
and
\[
\fl_{f,a}^-=\begin{cases}
\chi^{-k/2+1}(\gamma_0)\gamma_0-1&\text{if $\epsilon(p)=-1$, $k$ is even and $a\equiv k/2-1\mod{p-1}$,}\\
1&\text{otherwise.}
\end{cases}
\]
Then,
\[
\im(\Col^\pm)^{\omega^a}=\Lambda_{E}(\Gamma)\fl_{f,a}^\pm.
\]
\bibliographystyle{amsalpha}
\bibliography{references}
\end{document}